\newenvironment{pf}{\vspace{0.5em}\noindent\textbf{Proof.} }{\quad \hfill $\Box$ \\ \vspace{0.5em}\\}
\def\div{\, \mbox{div}\,  }
\def\L1{L^r(\er^N)}
\def\L2{H^1_{\rho}(\er^N)}
\def\vvv { \varepsilon_1}
\def\vv { \varepsilon}
\def\pp{ p-\varepsilon}
\def\w{\rho (y)}
\def\p{\phi }
\def\ps{\phi(s)}
\numberwithin{equation}{section}
\def\ibint{\int_{\er^N}}
\def\iint {\int_{\er^N}}
\def\ia{\int_{s}^{s+1}}
\def\L{L(w(s),s)}
\def\v{{\mathrm{d}}v}
\def\Box{\hfill\rule{2.5mm}{2.5mm}}
\def\t{{\mathrm{d}}\tau}
\def\y{{\mathrm{d}}y}
\def\no{\nonumber}
\def \T {\max(-\log T,1)}
\def \TT {\max(-\log T,S_1)}
\def \TTT {\max(-\log T,S_2)}
\def\grad{\nabla}
\def\m {{m}}
\def \er {\mathbb R}
\def\R{{\mathbb {R}}}
\def\cprime{$'$}
\newcommand{\ds}{\displaystyle}
\def\J{\frac{1}{s}}
\def\t{{\mathrm{d}}\tau}
\def\y{{\mathrm{d}}y}
\def\y{{\mathrm{d}}y}
\theoremstyle{plain}
\newtheorem{thm}{Theorem}
\newtheorem*{thm*}{Theorem}
\newtheorem{coro}[thm]{Corollary}
\newtheorem{prop}{Proposition}[section]
\newtheorem{lem}[prop]{Lemma}
\theoremstyle{definition}
\theoremstyle{remark}
\newtheorem{nb}{Remark}[section]
\def\blfootnote{\xdef\@thefnmark{}\@footnotetext}
\title{\bf The blow-up rate for    a non-scaling invariant   semilinear heat equation }
\author{Mohamed  Ali  Hamza\\
{\it \small 
Imam Abdulrahman Bin Faisal University
P.O. Box 1982 Dammam, Saudi Arabia}\\
Hatem Zaag\\
{\it \small Universit\'e Sorbonne Paris Nord,}\\
{\it \small LAGA, CNRS (UMR 7539), F-93430, Villetaneuse, France}
}
\date{}
\begin{document}

\maketitle

\begin{abstract}
 We consider  the    semilinear heat
equation $$\partial_t u -\Delta u =f(u), \quad  (x,t)\in \er^N\times [0,T),\qquad (1)$$ with
$f(u)=|u|^{p-1}u\log^a (2+u^2)$,  where $p>1$ 
is Sobolev subcritical 
and $a\in \er$.
We first  show an upper bound for any 
 blow-up solution  of (1).  Then,  using this estimate and  the logarithmic property, we prove  that   the exact blow-up rate of any singular solution of (1)  is  given by the ODE solution  associated with $(1)$,
namely $u' =|u|^{p-1}u\log^a (2+u^2)$. In other terms, all blow-up solutions in the Sobolev subcritical range are Type I  solutions. Up to our knowledge, this is the first 
determination of the blow-up rate for a semilinear heat equation where the main nonlinear term is not homogeneous.
\end{abstract}

\medskip

 {\bf MSC 2010 Classification}:  35B44,  35K58, 35B40.

\noindent {\bf Keywords:} Finite-time blow-up, Blow-up rate, Type I blow-up,  Semilinear
parabolic equations, Log-type nonlinearity.

\section{Introduction}
\subsection{ Motivation of the problem }
This paper is devoted to the study of blow-up solutions for the
following semilinear  heat equation:
\begin{equation}\label{gen}
\left\{
\begin{array}{l}
\partial_t u =\Delta u +f(u),\qquad  (x,t)\in \er^N\times [0,T),\\
\\
u(x,0)=u_0(x)\in  L^{\infty}(\er^N),
\end{array}
\right.
\end{equation}
where $u(t):x\in{\er^N} \rightarrow u(x,t)\in{\er}$ 
with focusing nonlinearity $f$ defined by:
\begin{equation}\label{deff}
 f(u)=   |u|^{p-1}u\log^a (2+u^2), \quad  p>1,\quad  a\in \er.
 \end{equation}
We assume in addition  that $p>1$  and if $N\ge 3$,  we further  assume that
\begin{equation}\label{subc}
p<p_S\equiv \frac{N+2}{N-2}.
\end{equation}
Note that when $a\neq 0$, the nonlinear term is not homogeneous, and
this is the focus of our paper.

\medskip

By standard results the problem \eqref{gen} has  a unique solution for any   $u_0\in  L^{\infty}(\er^N)$. More precisely, there is a unique maximal solution on $[0,T)$, with $T\le \infty$.  If  $T<\infty$,
we say that  the solution of \eqref{gen}
 blows up in finite time. In that case, it holds that  $\|u(t)\|_{L^{\infty}(\er^N)}\to \infty$ as $t\to T$.
%
Such a solution $u$ is
called a blow-up solution of  \eqref{gen} with the blow-up time $T$. 

\medskip


In the case $a=0$, equation \eqref{gen} reduces to the semilinear heat equation with power nonlinearity:
\begin{equation}\label{NLH}
\partial_{t} u =\Delta  u+|u|^{p-1}u,   \,\,\,(x,t)\in \R^N \times [0,T).
\end{equation}
In the literature, the determination of the blow-up rate has been linked to the terminology of  ''Type I/Type II solutions'', first introduced (up to our knowledge) by Matano and Merle in 
\cite{MMCPAM2004}.
In that paper, if a solution $u$ to \eqref{NLH} blows up at time $T$ and satisfies for all $t\in [0,T)$,
 \begin{equation}\label{mzmz}
 \|u(t)\|_{L^{\infty}(\er^N)}\le C  (T-t)^{-\frac{1}{
   p-1}},
  \end{equation}
for  some positive constant $C$, independent of time $t$,
then $u$ is called a Type I. If not, then $u$ is said to be of Type II. Note that the bound given in (1.5) is (up to a multiplying factor) a solution of the associated ODE $u'=u^p$. \\
In the subcritical case under consideration (1.3), we know from Giga and Kohn  \cite{GKcpam85,  GKIuMJ87,GKCPAM89},   and also Giga, Matsui and Sasayama \cite{GMSiumj04} that \textit{all} blow-up solutions of (1.4) are of Type I. Moreover, from the construction provided by Nguyen and Zaag \cite{VZ}, we know that Type I solutions are available for any superlinear exponent $p$, not only in the subcritical case, despite what the authors noted at that time. \\
As for Type II solutions, we know that they are available in the critical range (see Schweyer \cite{sJFA2012}, Harada \cite{HIHP20}, Del Pino, Musso and Wei \cite{DMW}, Collot, Merle and Raphaël \cite{CMR}, Filippas, Herrero and Vel\`azquez \cite{FHV}), and also in the supercritical range (see Herrero and Vel\`azquez \cite{HV1}, Mizoguchi \cite{M}, Seki \cite{S1,S2}.

\bigskip 

Going back to the proof given in  \cite{GMSiumj04} for the fact that all blow-up solutions for equation \eqref{NLH} in the subcritical range \eqref{subc} are of Type I, we would like to mention that the following estimate is central in the argument:
%
%
%
\begin{equation}\label{23jan1}
\int_s^{s+1} \|w(\tau )\|_{L^{p +1}(\mathbf{B}_R)}^{(p+1)q} {\mathrm{d}\tau}  \leq K(q,R), 
 \ \   \forall q \geq 2,\ \   \forall R>0, \ \   \forall s > -\log T,
\end{equation}
where $w$ is the similarity variables version  of the solution defined in \eqref{scaling}  below and 
$\mathbf{B}_R\equiv
  B(0,R)$  is the  open ball of radius $R$ centered at the origin in $\er^N.$

\medskip

Exploiting the   non-trivial perturbative method  introduced   by the authors in  \cite{HZjhde12, HZnonl12}  in the hyperbolic case
and arguing as in the  non perturbed case   in \cite{GMSiumj04},
 Nguyen proved in \cite{V}
  a similar result to $\eqref{mzmz}$,
valid in the subcritical  case,  for  a class of strongly perturbed
semilinear heat equations
\begin{equation}\label{NLHP}
\partial_t u =\Delta u+|u|^{p-1}u+h( u ),\,\,\,(x,t)\in \R^N \times [0,T),
 \end{equation}
under the assumptions  $|h(u)|\leq M(1+|u|^p)\log^{-a}(2+u^2)$,   for some $M > 0$ and $a>1$. 
%
%
Obtaining the same blow-up rate is reasonable, since 
the dynamics is still  governed by the ODE $u' =|u|^{p-1}u$.
Furthermore, the proof remains (non trivially) perturbative  with respect to the homogeneous PDE \eqref{NLH}, which is scale invariant.

\bigskip
This leaves unanswered an interesting question: is the scale invariance  property crucial  in deriving the blow-up rate?

\bigskip
 
In fact we {\textit{had the impression}} that the answer was ''yes'', since the scaling invariance induces in  similarity  variables a PDE which is  autonomous in the unperturbed case \eqref{NLH}, and asymptotically autonomous in the perturbed case \eqref{NLHP}.

\bigskip

In this paper we {\textit{prove}} that the answer is ''no'' from the example  of the non  homogeneous PDE \eqref{NLH}. In fact, 
our situation is different from  \eqref{NLH} and  \eqref{NLHP}.
 Indeed,   the  term   $|u|^{p-1}u\log^a (2+u^2)$  
 is playing a fundamental role in the dynamics of  the blow-up  solution of \eqref{gen}. More  precisely, we obtain an analogous  result to \eqref{mzmz} but with a logarithmic correction  as shown in \eqref{Heatest} below.  In fact, the bow-up rate  is
 given by the solution of the associated ODE   $u'=|u|^{p-1}u\log^a (2+u^2)$.

\medskip

In this paper, we study the  blow-up rate of any singular solution of \eqref{gen}.
Before handling the PDE, we first consider the following ODE associated to \eqref{gen}:
\begin{equation}\label{v}
v_T' (t)=  |v_T (t)|^{p-1}v_T(t) \log^{a}\big(v_T^2(t)  +2\big), \quad v(T)=\infty,
\end{equation}
and show  that  the nonlinear term including  the  logarithmic factor
gives rise to different dynamics. In fact, thanks to \cite{DVZ} (see Lemma A1), 
 we can see that  the solution $v_T$   satisfies
 \begin{equation}\label{equivv}
 v_T(t) \sim \kappa_{a}\psi_T(t), \text{ as } t \to\ T,\quad \textrm{ where}\quad  \kappa_{a} =  \left(\frac{2^{a}}{(p-1)^{1-a}} \right)^{\frac1{p-1}},
 \end{equation}
and
 \begin{equation}\label{psi}
 \psi_T(t)=(T - t)^{-\frac{1}{p-1}} (-\log (T - t))^{-\frac{a}{p-1}}.
 \end{equation}

\medskip

Therefore, it is natural to  extend  the terminology
 "Type I/Type II solutions" for  the blow-up of a solution $ u(x, t)$ of  \eqref{gen} by the following:
 \begin{align}\label{BlowupIII}
   (T-t)^{\frac{1}{
   p-1}}(-\log (T - t))^{\frac{a}{p-1}}\|u(t)\|_{L^{\infty}(\er^N)}\le C, \qquad & \textrm {Type I}\\
  {\lim \sup}_{t\to T}(T-t)^{\frac{1}{
   p-1}}(-\log (T - t))^{\frac{a}{p-1}}\|u(t)\|_{L^{\infty}(\er^N)}=\infty, \qquad & \textrm {Type II}.
  \end{align}
%

\medskip

Let us mention that 
  Duong, Nguyen and Zaag construct in \cite{DVZ} a solution of equation   \eqref{gen}
which blows up in finite time $T$,  only at one blow-up point $x_0$, according to the following asymptotic dynamics:
\begin{equation}\label{heatequiv}
u(x,t)\sim v_T(t)\Big(1+\frac{(p-1)|x-x_0|^2}{4p(T-t)|\log (T-t)|}\Big)^{-\frac{1}{p-1}},\qquad  as\  t\to T,
\end{equation}
  where $v_T(t)$ is the  solution of \eqref{v}   with an equivalent given in \eqref{equivv}. Note from \eqref{heatequiv}  that the constructed solution is
 of Type I. 

\bigskip

Concerning the blow-up rate  for the hyperbolic equations with a non-homogeneous main term, we would like to mention that 
 in \cite{HZjmaa2020} and \cite{HZArxiv2020},  we consider   the    semilinear wave
equation 
\begin{equation}\label{WP}
\partial_t^2 u -\Delta u =|u|^{p-1}u\log^a (2+u^2), \quad  (x,t)\in \er \times [0,T),
\end{equation}
   where $a\in \er$ and $p>1$ is subconformal, in the sense that $(N-1)p<N+3$. 
 We prove  that   the exact blow-up rate of any singular solution of \eqref{WP}  is  given by the ODE solution  associated with \eqref{WP},
namely
%
\begin{equation}\label{vw}
V_T'' (t)=  |V_T (t)|^{p-1}V_T(t) \log^{a}\big(V_T^2(t)  +2\big), \quad V(T)=\infty.
\end{equation}
Let us mention that    the nonlinear term involving  the  logarithmic
factor   gives raise to different dynamics. To be precise,   the solution $V_T$   satisfies
 \begin{equation}\label{equivvw}
 V_T(t) \sim C (a,p)(T - t)^{-\frac{2}{p-1}} (-\log (T - t))^{-\frac{a}{p-1}}, \text{ as } t \to\ T.\ \  
\end{equation}
Since the blow-up rate is given by $V_T(t)$,
we see that the effect of the nonlinearity is completely encapsulated in  \eqref{equivvw}. 
Note that before  \cite{HZjmaa2020, HZArxiv2020}, we could successfully implement our perturbative method in  \cite{H1, omar1, omar2,HZjhde12,HZnonl12}  
 to derive the blow-up rate
 for some classes of perturbed wave equations where the main nonlinear term is power-like (hence, homogeneous). 
\subsection{Strategy of the proof}
Going back to the equation under study in this paper (see \eqref{gen} and \eqref{deff}), we
 introduce the following similarity variables,   defined for all
  $x_0\in \er^N$ by:
\begin{equation}\label{scaling}
y=\frac{x-x_0}{\sqrt{T-t}},\quad s=-\log (T-t),\quad u(x,t)=\psi_{T}(t)w_{x_0,T}(y,s),
\end{equation}
where $\psi_T(t)$ is the explicit rate given in \eqref{psi}.
On may think that it would be more natural to replace $\psi_T(t)$ by
$v_T(t)$ (defined in \eqref{v}) in this definition, since the latter is an exact solution of 
the ODE \eqref{v}. That might be good, however, as $v_T(t)$ has no explicit expression, the calculations will immediately become too complicated.
For that reason, we preferred to replace the non-explicit $v_T(t)$ by its explicit equivalent $\psi_T(t)$ in \eqref{psi}. The fact the latter is not an exact solution of \eqref{v} will have no incidence in our analysis.

\medskip

From (\ref{gen}) and  (\ref{scaling}), the  function $w_{x_0,T}$  (we write $w$ for
simplicity) satisfies the following equation for all $y\in \er^N$ and $s\ge \max ( -\log T,1)$:
\begin{align}\label{A}
\partial_{s}w&=\frac{1}{\rho}\div(\rho \grad w)-\frac{1}{p-1}(1-\frac{a}{s})w+
e^{-\frac{ps}{p-1}}s^{\frac{a}{p-1}} f(\ps w),
\end{align}
where   
\begin{equation}\label{rho}
\rho (y)=e^{-\frac{|y|^2}4}
\end{equation}
and 
\begin{equation}
\ps =e^{\frac{s}{p-1}}s^{-\frac{a}{p-1}}.\label{defphi}
\end{equation}

\medskip

In the new
set of variables $(y,s),$  studying the behavior of $u$ as $t \rightarrow T$
is equivalent to studying  the behavior of $w$ as $s \rightarrow +\infty$.

\medskip
%
%

%
%

\medskip

While reading Giga and Kohn \cite{GKcpam85,GKIuMJ87,GKCPAM89} dedicated to the blow-up rate of the homogeneous case  \eqref{NLH}, one sees that the existence of a 
Lyapunov functional for the similarity variables’ version (\ref{A}) with $a=0$ is central in the argument. Clearly, 
  the invariance of equation \eqref{NLH} under  the   scaling transformation
$u \mapsto u_{\lambda}(x,t)=\lambda^{\frac1{p-1}}u(\lambda x,\lambda^2 t)$   was  crucial
in  the construction of the Lyapunov functional.
  The  fact that 
 equation \eqref{gen} is not invariant under the last scaling transformation  implies 
that the existence of a Lyapunov functional in similarity variables is far from being trivial (see   \cite{V,V2} in the parabolic case and 
 \cite{H1, omar1, omar2,HZjhde12,HZnonl12,HZjmaa2020} in the hyperbolic case).

\medskip

 In this paper, we 
 construct a 
  Lyapunov  functional in similarity variables  for the problem \eqref{A}.   Then, we prove  that   the blow-up rate of any singular solution of \eqref{gen}  is given by  the  solution of \eqref{v}.

\bigskip

Let us explain how we derive the Lyapunov functional. As we did for the perturbed wave equation with a conformal exponent in
 \cite{H1, omar2,HZjhde12},
  we proceed in 2 steps: \\
- Step 1: we first introduce some functional (not a Lyapunov functional) for equation \eqref{A},  which is bounded by $s^\alpha$ for some $\alpha>0$, then show that $w$ enjoys also a polynomial (in $s$) bound.\\
- Step 2: then, viewing equation \eqref{A}
 as a perturbation of the
 case of a pure power nonlinearity (case where $a=0$ in \eqref{A}) by  the following  terms: 
\begin{equation}\label{perb4}
\frac{a}{(p-1)s} w \quad \textrm{ and} \quad  e^{-\frac{ps}{p-1}}s^{\frac{a}{p-1}} f(\ps w),
\end{equation} 
 we use the rough estimates on $w$ proved in the first step, in order to control the « perturbative » terms in  \eqref{A}. This way, we find a Lyapunov functional for  \eqref{A}, then use it to prove that the solution itself is bounded.\\
Specifically, in Step 1, we would like to add the following regarding the effect of the perturbation terms \eqref{perb4} and the way we handle them:
 The first   term 
is a    lower order term  which was already  handled  in the Sobolev subcritical  perturbative case treated in
   \cite{V,V2}.  
 However, since  the nonlinear term  
$e^{-\frac{ps}{p-1}}s^{\frac{a}{p-1}} f(\ps w)$ depends on  time $s$,
  we expect the time derivatives to be delicate. 
 Thanks to the fact that   $uf(u)-(p+1)\int_0^uf(v)
{\mathrm{d}}v\sim  
\frac{2a}{p+1}|u|^{p+1}\log^{a-1}(2+u^2)$, as $u\to \infty$,
we   construct a functional    (in Section \ref{section2})
  satisfying this kind of differential inequality:
 \begin{equation}\label{jardin1}
\frac{d}{ds}h(s)\le -\frac12 \iint (\partial_{s}w)^2{\w}\y
+ \frac{C}{s}h(s)+Ce^{-s},
\end{equation} 
 which  implies  a
polynomial estimate.

\medskip

%

\bigskip

In order
to state our main result,  we start by introducing the following 
functionals:
\begin{eqnarray}
E(w(s),s)\!\!\!&=&\!\!\!\!\iint \Big(\frac{1}{2}|\grad w|^2+\frac{1}{2(p-1)}w^2-e^{-\frac{(p+1)s}{p-1}}s^{\frac{2a}{p-1}}   F(\p w)\Big)\w \y, \label{18jan1111}\\
&&\no\\
L_0(w(s),s)&=&E(w(s),s)-\frac1{s\sqrt{s}}
\iint w^2\w\y,\label{5jan1111}
\end{eqnarray}
where
\begin{equation}\label{defF}
 F(u)=\int_{0}^{u}f(v){\mathrm{d}}v=\int_0^u|v|^{p-1}v \log^{{a}}(v^2  + 2)\v.
\end{equation}
Moreover,  for all
 $s\ge  \max (- \log T,1 )$,  we define the functional
\begin{equation}\label{10dec2}
L(w(s),s)=\exp\Big(\frac{p+3}{\sqrt{s}}\Big) L_0(w(s),s)+\frac{\theta}{s^{\frac34}},
\end{equation}
where $\theta$ is a sufficiently large constant that will be determined later.
We derive that the functional  $L(w(s),s)$ is a decreasing 
  functional  of time  for equation (\ref{A}),  provided that $s$ is  large enough.
Clearly, by  \eqref{18jan1111}, \eqref{5jan1111} and \eqref{10dec2},  the  functional $L(w(s),s)$  is a small perturbation of the   natural energy $E(w(s),s)$.

\medskip

Here is  the statement of  our main theorem in this paper.
 \begin{thm}[A Lyapunov functional in similarity variables]\label{t1}
Consider   $u $    a solution of ({\ref{gen}}),  with blow-up time
$T>0$.
Then, there exists $t_1\in [0,T) $ such that, 
  for all  $s\ge  -\log(T-t_1)$ and $x_0\in \er^N$,  we have
\begin{equation}\label{t1lyap}
 L(w(s+1),s+1)-L(w(s),s) \leq -\frac1{2} \int_{s}^{s+1}\iint (\partial_{s}w)^2\w\y \t,
\end{equation}
where  $w=w_{x_0,T}$ is defined in \eqref{scaling}.
\end{thm}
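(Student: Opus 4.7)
The plan is to differentiate the functional $L(w(s),s)$ along trajectories of \eqref{A} and show that, after exploiting the polynomial bounds promised by Step 1 (i.e. the consequence of the auxiliary inequality \eqref{jardin1}), the derivative is controlled by $-\int_{\er^N}(\partial_s w)^2\rho\,dy$ plus lower-order remainders that are absorbed by the negative contributions coming from the $s$-dependent corrections built into the definitions of $L_0$ and $L$.

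The first step is to compute $\frac{d}{ds} E(w(s),s)$. Using \eqref{A} and the identity $\nabla\rho=-\frac{y}{2}\rho$, integration by parts gives the expected dissipation $-\int_{\er^N}(\partial_s w)^2\rho\,dy$ together with a remainder $R(s)$ that gathers (i) the contribution of the linear perturbation $\frac{a}{(p-1)s}w$, (ii) the derivative with respect to $s$ of the explicit prefactor $e^{-(p+1)s/(p-1)}s^{2a/(p-1)}$ in front of $F(\phi w)$, and (iii) the term $\partial_s[F(\phi w)]$ coming from the $s$-dependence of $\phi(s)$. The key elementary observation is that
\begin{equation*}
uf(u)-(p+1)F(u)\sim \tfrac{2a}{p+1}|u|^{p+1}\log^{a-1}(2+u^{2})\quad\text{as }|u|\to\infty,
\end{equation*}
which, combined with $\phi'(s)/\phi(s)=\frac{1}{p-1}-\frac{a}{(p-1)s}$, lets the contributions in (ii) and (iii) be reorganized into a term of size $O(1/s)$ times the energy-like integrals $\int w^{2}\rho$, $\int|\nabla w|^{2}\rho$, and $\int e^{-(p+1)s/(p-1)}s^{2a/(p-1)}|\phi w|^{p+1}\log^{a-1}(2+(\phi w)^{2})\rho$, plus an exponentially small piece originating from the bounded regime where $|\phi w|$ is not large. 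Thanks to the Step 1 polynomial estimates, each of these pieces is controlled by $\frac{C}{s}|E(w(s),s)|+\frac{C}{s}\int w^{2}\rho\,dy+Ce^{-s}$.

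The role of the corrections is then transparent. Differentiating the term $-\frac{1}{s\sqrt{s}}\int w^{2}\rho\,dy$ in $L_0$, and using $2w\partial_{s}w$ together with the equation \eqref{A}, produces on one hand an elementary positive factor $\frac{3}{2s^{5/2}}\int w^{2}\rho\,dy$, and on the other hand, after an integration by parts, a term of order $\frac{c}{s\sqrt{s}}E(w(s),s)$ that is genuinely \emph{negative} modulo quadratic remainders. Multiplying by $\exp((p+3)/\sqrt{s})$ generates the extra factor $-\frac{p+3}{2s^{3/2}}e^{(p+3)/\sqrt{s}}L_0$, which provides an additional sink proportional to $E$ itself, and finally the additive piece $\theta/s^{3/4}$ contributes $-\frac{3\theta}{4s^{7/4}}$, which is strictly negative and, by choosing $\theta$ large enough, absorbs whatever positive residue is left — in particular, the $O(1/s)$ remainder $R(s)$ and the exponentially small $Ce^{-s}$ term.

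The main obstacle is the clean bookkeeping of $R(s)$: because $\phi(s)\to\infty$ and the nonlinearity has a non-homogeneous logarithmic factor, the derivative of $F(\phi w)$ produces terms with $\log^{a}$ and $\log^{a-1}$ weights whose sign and size must be tracked carefully. The technical heart of the proof will be to show that, once the asymptotic identity above is used, all such terms combine to an $O(1/s)$ multiple of genuine energy quantities (rather than quantities with an uncontrolled logarithmic growth), so that they can be dominated by the sink terms in $L$. Once this is achieved, integrating the resulting differential inequality $\frac{d}{ds}L(w(s),s)\le -\int(\partial_s w)^2\rho\,dy$ between $s$ and $s+1$ for $s\ge -\log(T-t_1)$ with $t_1$ close enough to $T$ yields \eqref{t1lyap}.
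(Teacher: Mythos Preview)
Your overall architecture is the right one, and you correctly identify the dissipation term, the $J$-correction, the multiplicative factor $\exp((p+3)/\sqrt{s})$ and the additive $\theta/s^{3/4}$ as the ingredients. The gap is in the size you claim for the remainder $R(s)$ and in the way the Step~1 polynomial bound actually enters.

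You assert that the contributions (ii) and (iii) reorganize into ``$O(1/s)$ times the energy-like integrals'', in particular $\frac{C}{s}\int e^{-(p+1)s/(p-1)}s^{2a/(p-1)}|\phi w|^{p+1}\log^{a-1}(2+\phi^{2}w^{2})\rho\,dy$, and then summarize this as $\frac{C}{s}|E|$. But $\Sigma_1^2(s)$ alone is already of order $\int s^{-a}|w|^{p+1}\log^{a-1}(2+\phi^{2}w^{2})\rho$, with no extra $1/s$; and $\Sigma_1^3(s)$ is of order $\frac{1}{s}\int s^{-a}|w|^{p+1}\log^{a}(2+\phi^{2}w^{2})\rho$, which on the region $|\phi w|\ge 1$ (where $\log(2+\phi^{2}w^{2})\sim \frac{2s}{p-1}$) is the \emph{same} size. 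In other words, each of (ii) and (iii) is genuinely $O(1/s)$ times the nonlinear part of $E$. On the other hand, the sink produced by the $\frac{1}{\sqrt{s}}J$-correction together with the factor $e^{(p+3)/\sqrt{s}}$ is only of order $\frac{1}{s^{3/2}}$ times the nonlinear part of $E$. Since $1/s\gg 1/s^{3/2}$, absorption fails, and neither the $\theta/s^{3/4}$ piece nor the $e^{-s}$ term can rescue this.

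What the paper does that you are missing is a genuine \emph{cancellation} between the leading parts of $\Sigma_1^2$ and $\Sigma_1^3$: grouping them one finds
\[
\Sigma_1^2+\Sigma_1^3=\frac{a}{(p+1)s^{a+1}}\int|w|^{p+1}\log^{a-1}(2+\phi^{2}w^{2})\Big(\log(2+\phi^{2}w^{2})-\tfrac{2s}{p-1}\Big)\rho\,dy\ +\ \text{l.o.t.}
\]
The bracket equals $\log(2\phi^{-2}+w^{2})-\tfrac{2a}{p-1}\log s$, i.e.\ essentially $\log(2+w^{2})$, so the surviving term is $\frac{C}{s^{a+2}}\int|w|^{p+1}\log^{a}(2+\phi^{2}w^{2})\log(2+w^{2})\rho$. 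This still carries the dangerous factor $\log(2+w^{2})$, which is \emph{not} bounded a priori. The polynomial bound from Step~1 is used precisely here --- and nowhere else in the argument --- via an interpolation lemma (the paper's Lemma~3.2, using $\|w\|_{L^{r}(\mathbf{B}_R)}\le Cs^{\mu}$ for $r<2^{*}$) to show that this extra factor costs at most $s^{1/4}$, yielding a remainder of order $\frac{1}{s^{7/4}}$ times the nonlinear energy. That exponent $7/4>3/2$ is exactly what makes the absorption by the $J$-sink work and explains why the additive correction is $\theta/s^{3/4}$ (derivative $\sim 1/s^{7/4}$). Your use of the Step~1 bound to control ``$\frac{C}{s}|E|$'' is not where it helps; it must be used to kill the residual $\log(2+w^{2})$ after the cancellation.
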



\begin{nb}
  We choose to put forward this result proving the existence of a Lyapunov functional and
  state it as the first result of our paper (namely Theorem \ref{t1}),
  mainly because we consider it as the crucial  step in our argument,
  and also because its proof  is far from being trivial.
\end{nb}

%
%
%

\medskip

The existence of this Lyapunov functional  $ L(w(s),s)$ together with a blow-up criterion for
equation \eqref{A} make  a crucial step in the derivation of the blow-up
rate for equation \eqref{gen}. Indeed, with the functional $ L(w(s),s)$, we are able to
adapt the analysis performed in  
\cite{GKcpam85,GKIuMJ87,GKCPAM89}
 for equation \eqref{NLH} and obtain the following result:

\begin{thm}
[Blow-up rate for equation \eqref{gen}]\label{t2}
Consider   $u $    a solution of ({\ref{gen}}),  with blow-up time
$T>0$.
Then, there exists $t_2\in [t_1,T) $ such that  for all
  $t\in [t_2,T)$,  we have
\begin{align}
\|u(t)\|_{L^{\infty}(\er^N)}\le K  (T - t)^{-\frac{1}{p-1}} (-\log (T - t))^{-\frac{a}{p-1}},\label{Heatest}
\end{align}
where 
$K=K(p,  a, T, t_2,\|u(\tilde {t_2})\|_{
L^{\infty}})$, for some $\tilde{t_2}\in [0,t_2)$. 
\end{thm}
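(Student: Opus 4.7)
By the similarity-variables transformation \eqref{scaling} and the explicit form of $\psi_T$ in \eqref{psi}, the bound \eqref{Heatest} is equivalent to a uniform bound on $\|w_{x_0,T}(\cdot,s)\|_{L^\infty(\er^N)}$ with respect to $x_0\in \er^N$ and $s\ge s_1:= -\log(T-t_1)$. The plan is to follow the general scheme of Giga, Matsui and Sasayama \cite{GMSiumj04}, treating the additional $s$-dependent and logarithmic contributions in \eqref{A} as perturbations controlled by the Lyapunov functional of Theorem \ref{t1}.

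First, I would establish uniform two-sided bounds on $L(w(s),s)$. The upper bound follows by iterating the monotonicity provided by Theorem \ref{t1} starting from an initial slice $s\in [s_1,s_1+1]$, on which $L$ is bounded thanks to standard parabolic regularity applied to the bounded initial condition $u(\cdot,t_1)$. A lower bound on $L$ is obtained by coupling the definitions \eqref{18jan1111}--\eqref{10dec2} with a Pohozaev-type identity for \eqref{A} (multiply by $w\rho$ and integrate) in the spirit of \cite{GMSiumj04}, absorbing the perturbative terms \eqref{perb4} through their decay in $s$. Combining both bounds with Theorem \ref{t1} gives
\begin{equation*}
\int_{s_1}^{+\infty} \ibint (\partial_s w)^2\rho\, dy\, ds <\infty,
\end{equation*}
so that $\int_s^{s+1}\ibint(\partial_s w)^2\rho\, dy\, d\tau \to 0$ as $s\to\infty$.

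Second, I would derive an analogue of the key estimate \eqref{23jan1}. The two-sided control on $L$ yields uniform averaged bounds on $\ibint|\nabla w|^2\rho\, dy$ and $\ibint w^2\rho\, dy$. Multiplying \eqref{A} by $w\rho$ and integrating over $(s,s+1)\times \er^N$, then applying Cauchy-Schwarz on the $\partial_s w$ cross-term and using the asymptotic $u f(u)\sim |u|^{p+1}\log^a(2+u^2)$, I would obtain
\begin{equation*}
\int_s^{s+1}\ibint |w|^{p+1}\rho\, dy\, d\tau \le C,\qquad s\ge s_1,
\end{equation*}
the logarithmic correction being absorbed via polynomial-in-$s$ bounds on $\phi$ from \eqref{defphi}. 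From this starting estimate, a bootstrap argument on local balls, as in Giga-Matsui-Sasayama, yields the full family of bounds $\int_s^{s+1}\|w(\tau)\|_{L^{(p+1)q}(\mathbf{B}_R)}^{(p+1)q}\, d\tau\le K(q,R)$ for every $q\ge 2$ and $R>0$, mirroring \eqref{23jan1}.

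Finally, I would close the argument via parabolic regularity. Choosing $q$ large enough, the subcritical exponent condition $p<p_S$ places us in a regime where a standard $L^q$--$L^\infty$ bootstrap applies to equation \eqref{A}, upgrading the previous integral bound to $\|w_{x_0,T}(\cdot,s)\|_{L^\infty(\mathbf{B}_1)}\le C$ for $s\ge s_2$, uniformly in $x_0$; setting $y=0$ then translates into \eqref{Heatest}, with $K$ depending on the data as stated. The main obstacle is the second step: one must verify that the perturbative terms $\frac{a}{(p-1)s}w$ and the slowly varying factor $s^{a/(p-1)}\log^a(\phi^2 w^2)$ do not destroy the Pohozaev-type identity; since they carry only slowly varying $s$-prefactors, they produce error contributions of order $s^{-\gamma}$ for some $\gamma>0$, which are absorbed thanks to the $\partial_s w$ estimate already obtained. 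Once this algebraic verification is completed, the remainder is a faithful adaptation of the argument for the homogeneous equation \eqref{NLH}.
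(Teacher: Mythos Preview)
Your proposal is correct and follows essentially the same route as the paper: two-sided bounds on the Lyapunov functional (upper from monotonicity, lower from a blow-up criterion obtained by multiplying \eqref{A} by $w\rho$), then the Giga--Matsui--Sasayama bootstrap on local balls, and finally interior parabolic regularity to pass to $L^\infty$. One technical point worth flagging: the paper does not obtain $\int_s^{s+1}\|w\|_{L^{p+1}(\mathbf{B}_R)}^{(p+1)q}\,d\tau\le C$ as you write, but rather the $\varepsilon$-shifted version $\int_s^{s+1}\|w\|_{L^{p+1-\varepsilon}(\mathbf{B}_R)}^{(p+1-\varepsilon)q}\,d\tau\le C$ (Proposition \ref{19dec3bis}), because the logarithmic factor $\log^a(2+\phi^2 w^2)$ is handled via the inequality $|z|^{p+1-\varepsilon}\le C\,e^{-\frac{(p+1)s}{p-1}}s^{\frac{2a}{p-1}}F(\phi z)+C(\varepsilon)$ rather than being ``absorbed via polynomial-in-$s$ bounds on $\phi$''; this $\varepsilon$-loss is harmless for the final regularity step since $p<p_S$ leaves room.
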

\begin{nb}
Note that the blow-up rate in this upper bound is sharp, since we have
from a simple comparison argument the following lower bound:
\[
\|u(t)\|_{L^{\infty}(\er^N)} \ge v_T(t)\sim \kappa_a  (T - t)^{-\frac{1}{p-1}} (-\log (T - t))^{-\frac{a}{p-1}},
\]
where the last equivalence was given in \eqref{equivv}.
\end{nb}

%
%

\begin{nb}
Let us remark that we can obtain the same  blow-up rate for  the  more general equation 
\begin{equation}\label{NLWlog}
\partial_t u =\partial_x^2 u+|u|^{p-1}u\log^a(2+u^2)+k(u),\,\,\,(x,t)\in \R \times [0,T),
 \end{equation}
under the assumption  that $|k(u)|\leq M(1+|u|^{p}\log^b(2+u^2))$,   for some $M > 0$ and $b<a-1$. 
More precisely,   under this hypothesis,   we can construct  a suitable Lyapunov
functional for  this equation. Then, we  can prove a similar result to \eqref{Heatest}. However, the case where $a-1\le b<a$ seems 
to be out of  reach of our technics, though we think  we may obtain the same rate as in the unperturbed  case.
\end{nb}


%
%

\medskip

This paper is organized as follows: In Section \ref{section2}, we obtain a rough control 
of the solution $w$. In Section \ref{section3},   thanks to that result, we prove that the 
functional $L(w(s),s)$  is a Lyapunov functional for equation \eqref{A}. Thus, we get Theorem \ref{t1}. Finally,  by
applying this last theorem,  we
 give the proof of
Theorem \ref{t2}.

\medskip

Throughout this paper,
$C$  denotes a  generic positive constant
 depending only on $p,N$  and $a,$  which may vary from line to
 line. As for $M$, it will be used for constants depending on initial
 data, in addition to $p$, $N$ and $a$. We may also use $K_1$, $K_2$,
 $K_3$... $M_1$, $M_2$, $M_3$... $Q_1$, $Q_2$, $Q_3$ for constants
 having the same dependence as $M$. If necessary, we may write
 explicitly the dependence of the constants we use.
Moreover, we denote 
 by $\mathbf{B}_R$  the open ball in $\er^N$ with center $0$ and
 radius $R$. Finally, note that we use the notation $f(s)\sim g(s)$ when
$\displaystyle{\lim_{s\to \infty}\frac{f(s)}{g(s)}=1}$.
\section{A polynomial bound for  solutions  of equation  \eqref{A}} \label{section2}
This section is devoted to the derivation of a  polynomial bound for a global   solution of equation  \eqref{A}.
  More precisely, this is the aim of this section.

\begin{prop}\label{prop21}
Let $R>0$.  Consider $w$   a global  solution of \eqref{A}. Then,     there exist
$\widehat{S}_1=\widehat{S}_1(a,p,N,R) \ge 1$ and  $\mu=\mu(a,p,N,R)>0$  such that,     for all $s\geq  \widehat{s}_1=\max (-\log T,\widehat{S}_1)$, 
we have
\begin{equation}\label{feb19}
\|  w(s )\|_{H^{1}(\mathbf{B}_R)}   \leq K_1(R) s^{\mu},
\end{equation}
 where  $K_1$ depends on $ p, a, N, R$  and
$\|w( \widehat{s}_1)\|_{H^1}$.
\end{prop}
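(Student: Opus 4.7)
The plan is to carry out the first of the two steps announced in the introduction. I will construct a functional $h(s)$, built from the natural energy
$$E(w(s),s)=\iint\Big(\tfrac12|\grad w|^2+\tfrac{1}{2(p-1)}w^2-A(s)F(\p w)\Big)\w\y, \qquad A(s):=e^{-\frac{(p+1)s}{p-1}}s^{\frac{2a}{p-1}},$$
and derive for it a differential inequality of the form
$$\frac{dh}{ds}\le -\tfrac12\iint(\partial_s w)^2\w\y+\frac{C}{s}h(s)+Ce^{-s},\qquad s\ge \widehat{s}_1.$$
Once this is in hand, dropping the negative kinetic term and applying the integrating factor $s^{-C}$ yields $h(s)\le K_1 s^C$. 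Since $\rho(y)=e^{-|y|^2/4}\ge e^{-R^2/4}$ on $\mathbf{B}_R$, the unweighted local $H^1$ norm is dominated by the weighted one, so this polynomial bound transfers to $\|w(s)\|_{H^1(\mathbf{B}_R)}$ provided $\iint A F(\p w)\w\y$ is itself polynomially bounded --- a fact that will fall out of the same argument via the pointwise comparison $|w|^{p+1}\lesssim A F(\p w)+\text{exp. small}$ valid on $\{|\p w|\ge 1\}$.

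The derivation of the inequality goes as follows. Differentiating $E(w(s),s)$, substituting $\partial_s w$ from \eqref{A}, and integrating by parts with the help of $\grad\rho=-\tfrac{y}{2}\rho$, the autonomous part of \eqref{A} reproduces the classical Giga--Kohn identity $-\iint(\partial_s w)^2\w\y$. The \emph{perturbative} contributions come from the linear term $\tfrac{a}{(p-1)s}w$ in \eqref{A} and from the explicit $s$-dependence of $A$ and $\phi$ in the potential. The linear one gives $\tfrac{a}{(p-1)s}\iint w\,\partial_s w\,\w\y$, which by Cauchy--Schwarz is absorbed into $\tfrac14\iint(\partial_s w)^2\w\y+\tfrac{C}{s^2}\iint w^2\w\y$ and is harmless. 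Using $A'/A=-\tfrac{p+1}{p-1}+\tfrac{2a}{(p-1)s}$ and $\phi'/\phi=\tfrac{1}{p-1}-\tfrac{a}{(p-1)s}$, the contribution from the explicit $s$-dependence of the potential rearranges as a combination of $\iint A\bigl[(\p w)f(\p w)-(p+1)F(\p w)\bigr]\w\y$ and $\tfrac{1}{s}\iint A\bigl[(\p w)f(\p w)-2F(\p w)\bigr]\w\y$. The key input is then the sharp pointwise asymptotic $uf(u)-(p+1)F(u)\sim\tfrac{2a}{p+1}|u|^{p+1}\log^{a-1}(2+u^2)$ combined with the identity $A\phi^{p+1}=s^{-a}$: on $\{|\p w|\ge 1\}$ this produces a term of size $\tfrac{C}{s}\iint|w|^{p+1}\w\y$, which at leading order is controlled by $\tfrac{C}{s}\iint A F(\p w)\w\y\le \tfrac{C}{s}h(s)$ (after choosing $h:=E+C_0$ with $C_0$ large enough to ensure $h\ge 1$); on $\{|\p w|\le 1\}$, the exponential smallness of $A(s)$ reduces the integrand to the $Ce^{-s}$ remainder.

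The step I expect to be the main obstacle is precisely this treatment of the non-autonomous potential. In the scale-invariant case $a=0$ the bracket $(\p w)f(\p w)-(p+1)F(\p w)$ vanishes identically --- which is why this difficulty is invisible in the classical Giga--Kohn setting --- whereas the logarithmic nonlinearity makes it survive here at size $\tfrac{1}{s}\iint|w|^{p+1}\w\y$, which is \emph{not} automatically of lower order than the energy. Controlling it requires the sharp pointwise asymptotic above (crude upper bounds are too lossy), together with a careful splitting of the integration domain according to whether $|\p w|$ is large or small, so as to extract the exponentially small remainder from the small-$|\p w|$ regime through the explicit decay of $A(s)$. Once this bookkeeping is complete, the Gronwall step turning the differential inequality into $h(s)\le K_1 s^\mu$, and the passage from the polynomial bound on $h$ to the one on $\|w(s)\|_{H^1(\mathbf{B}_R)}$, are routine via the lower bound of $\rho$ on $\mathbf{B}_R$ and the comparison between $|w|^{p+1}$ and $A F(\p w)$ discussed above.
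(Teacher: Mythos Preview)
Your argument has a sign error that prevents the Gronwall loop from closing. You take $h=E+C_0$ and claim that the nonlinear term arising in $dE/ds$ --- namely $\tfrac{C}{s}\iint A\,F(\phi w)\,\rho\,dy$, equivalently $\tfrac{C}{s^{a+1}}\iint|w|^{p+1}\log^a(2+\phi^2 w^2)\rho\,dy$ --- is controlled by $\tfrac{C}{s}h(s)$. But $E$ contains $-\iint AF(\phi w)\rho\,dy$, not $+\iint AF(\phi w)\rho\,dy$; when the potential term dominates the gradient part, $h$ is much smaller than $\iint AF(\phi w)\rho\,dy$ (and there is no a priori lower bound on $E$ that would let a fixed $C_0$ force $h\ge1$). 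This is exactly the obstruction the paper addresses by adjoining the auxiliary functional $J(w,s)=-\tfrac{1}{2s}\iint w^2\rho\,dy$: multiplying \eqref{A} by $w\rho$ (rather than $\partial_s w\,\rho$) and integrating produces in $dJ/ds$ a \emph{negative} copy of the nonlinear term, and taking $H_m=E+mJ$ with $m$ large cancels the bad term at the cost of $\tfrac{m(p+3)}{2s}H_m$ on the right --- which is the genuine source of the polynomial growth. Your observation on the sharp asymptotic of $uf(u)-(p+1)F(u)$ is correct and needed, but it only reduces the remainder to $\tfrac{C}{s}\iint AF(\phi w)\rho\,dy$; it does not absorb that term into $\tfrac{C}{s}h$.

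There is a second, independent gap. Even granting a polynomial bound on the functional, this yields only \emph{time-averaged} control of $\iint(|\nabla w|^2+w^2)\rho\,dy$ over $[s,s+1]$ (via the dissipative terms in the differential inequality), not pointwise-in-$s$ control, since the potential enters $E$ with the wrong sign. The paper gets to the pointwise $H^1(\mathbf{B}_R)$ bound through substantial further work: a localized energy $\mathcal{E}_\psi$ with a cutoff, a lower bound for $\mathcal{E}_\psi$ via a blow-up-in-finite-time argument, the Giga--Matsui--Sasayama bootstrap yielding $\int_s^{s+1}\|w\|_{L^{p+1-\varepsilon}(\mathbf{B}_R)}^{(p+1-\varepsilon)q}\,d\tau\le Ks^{\mu}$ for all $q\ge2$, Cazenave--Lions interpolation to upgrade this to a pointwise $L^{p+1-\varepsilon}$ bound, and finally a Gagliardo--Nirenberg step to close on $\|\nabla w\|_{L^2(\mathbf{B}_R)}$. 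Declaring this passage ``routine via $\rho\ge e^{-R^2/4}$'' overlooks all of it.
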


\bigskip

\begin{nb}\label{polybound}
By using  the Sobolev's embedding and the above proposition, we can deduce  that  for all $r\in [2,2^*)$, where $2^*=\frac{2N}{N-2}$, if $N\ge 3$ and $2^*=\infty$,  if $N=2$:
\begin{equation}\label{16dec5bis}
\|w(s)\|_{L^{r}(\mathbf{B}_R)} \le K_2(R)s^{\mu},\quad  \textrm{for all }\ \   
s\geq  \widehat{s}_1=\max (-\log T,\widehat{S}_1),
\end{equation}
where  $K_2(R)$ depends on $ p, a, N, R$  and
$\|w( \widehat{s}_1)\|_{H^1}$.
\end{nb}

In order to prove this proposition, we need to construct a  Lyapunov functional for equation \eqref{A}. 
Accordingly,  we start by recalling from \eqref{18jan1111} the
following functional
\begin{equation}\label{AAA15dec}
E(w(s),s)=\iint \Big(\frac{1}{2}|\grad w|^2+\frac{1}{2(p-1)}w^2-e^{-\frac{(p+1)s}{p-1}}s^{\frac{2a}{p-1}}   F(\p w)\Big)\w \y,
\end{equation}
where $F$ is given by \eqref{defF}.
Then, we introduce the following 
functionals:
\begin{eqnarray}
J(w(s),s)&=&-\frac1{2s}
\iint w^2\w\y,\label{5jan1}\\
H_{m}(w(s),s)&=&E(w(s),s)+m J(w(s),s),\label{F0}
\end{eqnarray}
where
$m >0$ is a sufficiently large constant that will be fixed later.

\medskip

In fact, the main  target of this section is to prove, for some $\m_0$ large enough,
that the energy $H_{\m_0}(w(s),s)$ satisfies  the  following inequality: 
\begin{equation}\label{Heta0}
\frac{d}{ds}H_{\m_0}(w(s),s)\le - \frac12 \ibint (\partial_{s}w)^2{\w}\y
+\frac{\m_0 (p+3)}{2s}H_{\m_0}(w(s),s)
+  C e^{-s},
 \end{equation}
which implies  that 
$H_{m _0} (w(s),s)$ satisfies the following polynomial estimate:
\begin{equation}\label{Heta1}
H_{\m_0}(w(s),s))\le  A_0 s^{\mu_0},
 \end{equation}
for some $A_0>0$ and  $\mu_0>0$.
%
%

\subsection{Classical energy estimates}
In this subsection, we state two lemmas which are crucial for the construction of a  Lyapunov functional.
 We begin with bounding
the time derivative of   $E(w(s),s)$  in the following lemma:
\begin{lem}\label{lem22}  For all   
  $s \geq \T$, we have 
\begin{align}\label{E011}
\frac{d}{ds}E(w(s),s)\le& -  \frac{1}{2}\ibint (\partial_{s}w)^2\w\y
+  \frac{C}{s^{a+1}}\ibint |w|^{p+1}\log^a(2+\p^2w^2)\w \y
+\Sigma_{1}(s),
 \end{align}
where $\Sigma_{1}(s)$  satisfies
\begin{align}\label{E011bis}
\Sigma_{1}(s)\le \frac{C}{s^2}\ibint  w^2\w \y  +C e^{-s}.
 \end{align}
\end{lem}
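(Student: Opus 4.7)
The plan is to start from the decomposition
\[
\frac{d}{ds}E(w(s),s) = \iint \frac{\delta E}{\delta w}\cdot \partial_s w\,\w\y \;+\; \partial_s E\big|_{w},
\]
where the second piece captures the explicit $s$-dependence of the coefficients in $E$. First I would compute the variational part by differentiating under the integral and integrating the gradient cross-term $\nabla w\cdot\nabla(\partial_s w)\,\w$ by parts, producing a factor $-\frac{1}{\rho}\div(\rho\nabla w)$. Substituting this quantity from the PDE \eqref{A} and collecting, most pieces cancel against the $\frac{1}{p-1}w$ and $f(\ps w)$ parts of $E$, leaving
\[
-\iint(\partial_s w)^2\,\w\y \;+\; \frac{a}{(p-1)s}\iint w\,\partial_s w\,\w\y.
\]
The surviving cross term reflects the non-scale-invariance of \eqref{gen}: it corresponds to the lower-order drift $\frac{a}{(p-1)s}w$ that \eqref{A} contains in addition to the standard Giga--Kohn flow. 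A straightforward Young's inequality absorbs this cross term into $\frac{1}{2}\iint(\partial_s w)^2\,\w\y$ plus a remainder of size $\frac{C}{s^2}\iint w^2\,\w\y$, which fits the definition of $\Sigma_1$.

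\textbf{Explicit time derivative.} Only the piece $-\alpha(s)F(\ps w)$ in $E$ carries explicit $s$-dependence, where $\alpha(s) = e^{-(p+1)s/(p-1)}s^{2a/(p-1)}$. Computing $\partial_s$ with $w$ frozen via Leibniz and using $\alpha'/\alpha = -\frac{p+1}{p-1}+\frac{2a}{(p-1)s}$ together with $\ps'/\ps = \frac{1}{p-1}-\frac{a}{(p-1)s}$, the result separates naturally along the two coefficient types:
\[
\frac{\alpha(s)}{p-1}\iint\bigl[(p+1)F(\ps w) - \ps w f(\ps w)\bigr]\w\y \;+\; \frac{a\,\alpha(s)}{(p-1)s}\iint\bigl[\ps w f(\ps w) - 2F(\ps w)\bigr]\w\y.
\]
For the second, already $1/s$-small, piece the trivial estimate $|uf(u) - 2F(u)| \leq C(|u|^{p+1}\log^a(2+u^2)+1)$ is enough; using the key algebraic identity $\alpha(s)\ps^{p+1} = s^{-a}$ to cancel the exponential weights, this piece directly produces the desired $\frac{C}{s^{a+1}}\iint|w|^{p+1}\log^a(2+\ps^2 w^2)\,\w\y$ contribution, modulo a $Ce^{-s}$ error from the additive constant.

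\textbf{Gaining the missing $1/s$ for the main term.} The $O(1)$-coefficient main term must also be controlled by the same $\frac{C}{s^{a+1}}$-quantity, and this is the delicate step. I would use the cancellation identity obtained by integrating $F$ by parts,
\[
u f(u) - (p+1)F(u) \;=\; 2a\int_0^u \frac{v^{p+2}\log^{a-1}(2+v^2)}{2+v^2}\,\v, \qquad |u f(u) - (p+1)F(u)| \leq C\bigl(|u|^{p+1}\log^{a-1}(2+u^2)+1\bigr),
\]
which reduces the log exponent by one and yields a tentative bound $\frac{C}{s^a}\iint|w|^{p+1}\log^{a-1}(2+\ps^2 w^2)\w\y$. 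To convert this reduced log into the missing $\frac{1}{s}$, I would split the integral into $\{|\ps w|\geq e^{\beta s}\}$ and $\{|\ps w|<e^{\beta s}\}$ for a small fixed $\beta>0$ (say $\beta=1/(p^2-1)$). On the ``large'' set, $\log(2+\ps^2 w^2) \geq 2\beta s$ for $s$ large, hence for any real $a$,
\[
\log^{a-1}(2+\ps^2 w^2) = \frac{\log^a(2+\ps^2 w^2)}{\log(2+\ps^2 w^2)} \leq \frac{1}{2\beta s}\log^a(2+\ps^2 w^2),
\]
giving the needed $\frac{1}{s}$. On the ``small'' set, $|w|$ is exponentially small in $s$, so multiplied by the prefactor $\alpha(s)$ the contribution decays like $Ce^{-s}$ and is absorbed in $\Sigma_1$. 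The main obstacle is precisely this gain of $1/s$: a naive estimate $|F(u)|+|uf(u)| \leq C|u|^{p+1}\log^a(2+u^2)$ would yield only $\frac{C}{s^a}\log^a$, one factor short of what the construction of the Lyapunov functional in Theorem \ref{t1} requires downstream.
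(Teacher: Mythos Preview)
Your proof is correct and takes essentially the same approach as the paper. The only organizational difference is that the paper packages your large/small splitting argument (which gains the crucial extra factor $1/s$ from $\log^{a-1}\le \frac{1}{cs}\log^a$ on the non-negligible set) into the pointwise estimates \eqref{equiv2}--\eqref{equiv3} of Lemma~\ref{lemm:esth}, proved with threshold $\phi(s)z^2\gtrless 4$ rather than your $|\phi w|\gtrless e^{\beta s}$, and then invokes the resulting inequality $F(\phi w)-\tfrac{\phi w f(\phi w)}{p+1}\le C+C\tfrac{\phi w}{s}f(\phi w)$ directly.
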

\begin{pf} Consider   $s \geq \T$. Multiplying $\eqref{A}$ by $\partial_{s} w\!\ \w$ and integrating over  $\er^N,$ we obtain
\begin{align}
\frac{d}{ds}E(w(s),s)=& -  \ibint (\partial_{s}w)^2\w\y+\underbrace{\frac{a}{(p-1)s}\ibint w\partial_{s}w\w\y}_{\Sigma^1_{1}(s)}\label{E00}\\
&+\underbrace{\frac{p+1}{p-1}
e^{-\frac{(p+1)s}{p-1}}s^{\frac{2a}{p-1}}\ibint\big( F(\p w)-\frac{\p wf(\p w)}{p+1}\big)\w \y}_{\Sigma^2_{1}(s)}\no\\
&\underbrace{-\frac{2a}{p-1}e^{-\frac{(p+1)s}{p-1}}s^{\frac{2a}{p-1}-1}\ibint \big( F(\p w)-\frac{\p wf(\p w)}{2}\big)\w \y}_{\Sigma^3_{1}(s)}.\no
 \end{align}

Now, we control the terms $\Sigma_{1}^{1}(s)$, $\Sigma_{1}^{2}(s)$ and  $ \Sigma_{1}^{3}(s)$. 
  By using  the following  basic inequality
\begin{equation}\label{basic1}
ab\le \varepsilon a^2+\frac1{\varepsilon}b^2, \ \forall \varepsilon >0,
\end{equation}
  we  write
\begin{equation}\label{sigma13}
\Sigma_{1}^{1}(s)\leq \frac1{2} \ibint (\partial_{s}w)^2\w \y+ \frac{C}{s^2}\ibint  w^2\w \y.
\end{equation}
Let us introduce the functions $F_1$ and $F_2$  defined by:
\begin{equation}
F_1(x)= -\frac{ 2a} {(p+1)^2}{| x|^{p+1}}\log^{{a-1}}(2+x^2  ),\label{defF2}
\end{equation}
and 
\begin{equation}\label{defF123}
F_2(x)=F(x)-\frac{xf(x)}{p+1}-F_1(x).
\end{equation}
By  the expressions of $F_1$,  $F_2$ given by   \eqref{defF2} and \eqref{defF123}
 and the estimates   \eqref{equiv2}  and  \eqref{equiv3}, we obtain
\begin{equation}\label{id2}
 F(\p w)-\frac{\p wf(\p w)}{p+1}\le   C+C \frac{ \p w}{s}f(\p w),
\end{equation}
which implies  
\begin{equation}\label{sigma11}
\Sigma_{1}^{2}(s)\le C
e^{-\frac{(p+1)s}{p-1}}s^{\frac{2a}{p-1}-1}
\ibint  \p wf(\p w) 
\w \y+  C e^{-s}.
\end{equation}
From the expression of $\p= \ps$ defined in \eqref{defphi},   we have
\begin{equation}\label{id1}
e^{-\frac{(p+1)s}{p-1}}s^{\frac{2a}{p-1}}  \p wf(\p w)=\frac1{s^{a}}|w|^{p+1}\log^a(2+\p^2w^2).  
\end{equation}
Thus, using \eqref{sigma11} and \eqref{id1}, we obtain
\begin{equation}\label{sigma111}
\Sigma_{1}^{2}(s)\le
\frac{C}{s^{a+1}}\ibint |w|^{p+1}\log^a(2+\p^2w^2)\w \y+  C e^{-s}.
\end{equation}
Similarly, by
 \eqref{equiv1} and \eqref{id1},  we easily obtain 
\begin{equation}\label{sigma12}
\Sigma_{1}^{3}(s)\le \frac{C}{s^{a+1}}\ibint |w|^{p+1}\log^a(2+\p^2 w^2)\w \y+  C e^{-s}.
\end{equation}
%
The results \eqref{E011} and \eqref{E011bis} follows  immediately from  \eqref{E00}, \eqref{sigma13},  \eqref{sigma111}  and \eqref{sigma12},
which ends the proof of Lemma \ref{lem22}.
\end{pf}

\vspace{-0.5cm}

\begin{nb}
By showing the estimate proved in Lemma \ref{lem22}, related to the so called  natural functional $E(w(s),s)$, we have  some
nonnegative  terms  in the right-hand side of \eqref{E011} and this  does not allow to construct a decreasing
 functional (unlike the case of a  pure power nonlinearity). 
The main problem   is related to  the nonlinear term 
$$\frac{1}{s^{a+1}}\ibint |w|^{p+1}\log^a(2+\p^2(s)w^2)\w \y=\frac{1}{s}\ibint w
e^{-\frac{ps}{p-1}}s^{\frac{a}{p-1}} f(\ps w)\w \y.$$     
 To overcome this problem, we adapt the strategy  used in  \cite{HZjhde12, HZnonl12,H1, omar1, omar2,V}. Indeed, by using the identity obtained 
 by multiplying  equation \eqref{gen} by $w\w$, 
then integrating over $\er^N$, we can 
  introduce a  new functional $
H_m(w(s),s)$ defined in \eqref{F0},  where $m>0$ is sufficiently large and
 will be fixed such that $H_{m}(w(s),s)$
satisfies a differential inequality similar to  \eqref{jardin1}.  
\end{nb}

 \medskip

We will prove the following estimate on the functional $J(w(s),s)$. 
 \begin{lem}\label{LemJ_0}
For  all $s \geq \T$, we have 
\begin{align}\label{6nov2018}
\frac{d}{ds}J(w(s),s)\ \  \le&\ \ \frac{p+3}{2s}E(w(s),s)
-\frac{p-1}{4s}\ibint  |\grad w|^2\w\y
- \frac{1}{4s} \ibint  w^2\w\y\no\\
&
-\frac{p-1}{2(p+1)s^{a+1}}\ibint |w|^{p+1}\log^a(2+\p^2 w^2)  
\w\y+\Sigma_2(s),
\end{align}
where  $\Sigma_2(s)$ satisfies
  \begin{align}\label{DEC1999}
  \Sigma_{2}(s)\leq
&\frac{C}{s^{a+2}}\ibint |w|^{p+1}\log^a(2+\p^2w^2)\w \y+\frac{C}{s^2}\ibint  w^2\w\y+  C e^{-s}.
\end{align}
\end{lem}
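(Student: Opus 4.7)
The plan is to mirror the strategy of the proof of Lemma~\ref{lem22}: differentiate $J(w(s),s)$ via the product rule, substitute \eqref{A} to eliminate $\partial_s w$, and reorganize to match the target. Starting from \eqref{5jan1},
\[\frac{d}{ds}J(w(s),s) = \frac{1}{2s^2}\iint w^2\w\y - \frac{1}{s}\iint w\,\partial_s w\,\w\y.\]
Multiplying \eqref{A} by $w\w$ and integrating over $\er^N$, integration by parts on the diffusive term produces $-\iint|\grad w|^2\w\y$, while the identity \eqref{id1} handles the nonlinear integral, yielding
\[\iint w\,\partial_s w\,\w\y = -\iint|\grad w|^2\w\y - \frac{1}{p-1}\Big(1-\frac{a}{s}\Big)\iint w^2\w\y + \frac{1}{s^a}\iint|w|^{p+1}\log^a(2+\p^2 w^2)\w\y.\]

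Substituting this back, the resulting expression must be reorganized as $\frac{p+3}{2s}E(w(s),s)$ plus the three explicit remainders appearing in \eqref{6nov2018}. Expanding $E$ from \eqref{AAA15dec} and reading off coefficients, the identities
\[\frac{p+3}{4s}-\frac{p-1}{4s}=\frac{1}{s},\qquad \frac{p+3}{4(p-1)s}-\frac{1}{4s}=\frac{1}{(p-1)s}\]
match exactly the coefficients of $\iint|\grad w|^2\w\y$ and $\iint w^2\w\y$ produced above. For the nonlinear part, one splits
\[\iint F(\p w)\w\y = \frac{1}{p+1}\iint \p w\,f(\p w)\w\y + \iint\!\Big(F(\p w)-\frac{\p w\,f(\p w)}{p+1}\Big)\w\y\]
and invokes \eqref{id1}; the first piece combines with $-\frac{p-1}{2(p+1)s^{a+1}}\iint|w|^{p+1}\log^a\w\y$ to reconstruct exactly the $-\frac{1}{s^{a+1}}\iint|w|^{p+1}\log^a\w\y$ term coming from the PDE computation.

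After this bookkeeping, $\Sigma_2(s)$ reduces to a $\frac{1}{s^2}$-order $w^2$ correction (combining $\frac{1}{2s^2}\iint w^2\w\y$ from the product rule with $-\frac{a}{(p-1)s^2}\iint w^2\w\y$ coming from the expansion of $1-\frac{a}{s}$) plus the residual
\[\frac{p+3}{2s}\,e^{-\frac{(p+1)s}{p-1}}s^{\frac{2a}{p-1}}\iint\!\Big(F(\p w)-\frac{\p w\,f(\p w)}{p+1}\Big)\w\y.\]
This residual is exactly the quantity already controlled in the proof of Lemma~\ref{lem22}: the $F_1$--$F_2$ decomposition \eqref{defF2}--\eqref{defF123} together with the asymptotics \eqref{equiv2}--\eqref{equiv3} give $F(\p w)-\frac{\p w f(\p w)}{p+1}\le C+C\frac{\p w\,f(\p w)}{s}$ as in \eqref{id2}; applying \eqref{id1} once more and using $e^{-\frac{(p+1)s}{p-1}}s^M\le Ce^{-s}$ for large $s$ produces the $Ce^{-s}$ and $\frac{C}{s^{a+2}}\iint|w|^{p+1}\log^a(2+\p^2 w^2)\w\y$ terms of \eqref{DEC1999}. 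The main obstacle is the rigid matching of the four coefficients in \eqref{6nov2018}, which is forced and pins down the factor $\frac{p+3}{2s}$ in front of $E$; beyond this bookkeeping, no estimate beyond those already established in Lemma~\ref{lem22} is required.
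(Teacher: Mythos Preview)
Your proposal is correct and follows essentially the same route as the paper: differentiate $J$, substitute \eqref{A} with \eqref{id1}, reorganize around $\frac{p+3}{2s}E(w(s),s)$, and bound the residual $\Sigma_2=\Sigma_2^1+\Sigma_2^2$ (your $w^2/s^2$ term is exactly the paper's $\Sigma_2^2$, and your $F-\frac{\p w f(\p w)}{p+1}$ residual is the paper's $\Sigma_2^1$) via \eqref{id2} and \eqref{id1}. Your explicit coefficient check is the ``straightforward computation'' the paper invokes without writing out.
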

\begin{pf} Consider   $s \geq \T$.
Note that $J(w(s),s)$ is a differentiable function and   that we get  
\begin{equation*}\label{j1}
\frac{d}{ds}J(w(s),s)=-\frac1{s} \ibint   w\partial_{s}w\w\y +\frac{1}{2s^2} \ibint   w^2\w \y.
\end{equation*}
From equation  $\eqref{A}$ and the identity \eqref{id1}, we conclude
\begin{align}\label{j2}
\frac{d}{ds}J(w(s),s)=&\J \ibint |\grad w|^2\w \y+ \frac{1}{(p-1)s} 
  \ibint  w^2\w\y\no \\
&-\frac1{s^{a+1}}\iint |w|^{p+1}\log^a(2+\p^2w^2)\w \y
+ \frac{1}{2s^2}\big(1-\frac{2a}{p-1}\big)   \ibint  w^2\w\y.\no
 \end{align}
According to the expressions of $E(w(s),s)$,  $\ps$ defined in \eqref{AAA15dec} and  \eqref{defphi} and the identity \eqref{id1} 
with some straightforward computation,  we obtain  \eqref{6nov2018} where
 \begin{equation}\label{R0}
\Sigma_2(s)=\Sigma^1_2(s)+\Sigma_{2}^{2}(s),
\end{equation} 
and 
\begin{align*}
\Sigma_{2}^1(s)=&
\frac{p+3}2e^{-\frac{(p+1)s}{p-1}}s^{\frac{2a}{p-1}-1} 
  \ibint \Big( F(\p w)-\frac{\p wf(\p w)}{p+1}\Big)\w\y,  \no \\
\Sigma_{2}^2(s)=& \frac{1}{2s^2}\big(1-\frac{2a}{p-1}\big)   \ibint  w^2\w\y.
\no 
 \end{align*}
Thanks to \eqref{id1} and  \eqref{id2},  we deduce 
\begin{equation}\label{R1}
\Sigma_{2}^{1}(s)\le
\frac{C}{s^{a+2}}\ibint |w|^{p+1}\log^a(2+\p^2w^2)\w \y+  C e^{-s}.
\end{equation}
Hence, collecting \eqref{R0} and  \eqref{R1}, one easily obtains that $\Sigma_{2}(s)$ satisfies \eqref{DEC1999},
 which ends the proof of Lemma \ref{LemJ_0}.
\end{pf}
\subsection{Existence of a decreasing functional for equation \eqref{A}}
In this subsection, by using Lemmas  \ref{lem22} and \ref{LemJ_0}, we will construct a decreasing
functional for equation (\ref{A}). Let us define the following functional:
\begin{equation}\label{lyap1}
N_{m}(w(s),s)=s^{-\frac{m(p+3)}{2}}H_{m}(w(s),s)+ A (m) e^{-s},
\end{equation} where 
$H_{m}(w(s),s)$
 is defined in \eqref{F0}, and  $m$ together with  $A=A(m)$ are   constants that  will be determined later.

\medskip

We now state the following proposition:
\begin{prop}\label{proplyap}
There exist $m_0>1$, $A (m_0) >0$,  $S_{1}\geq 1$ and $\lambda_1>0$, such that for all $s=s_1 \geq \TT$, 
we have 
\begin{align}\label{DEC101} 
N_{m_0}(w(s+1),s+1)-N_{m_0}(w(s),s)\le& - \frac{\lambda_1}{s^b}\ia\ibint\!\!
(\partial_{s}w)^2\w
\y\t\\
&\!\!\!-\frac{\lambda_1}{s^{b+1}} \ia\ibint  |\grad w|^2\w\y\t\no\\
& -\frac{\lambda_1}{s^{a+b+1}} 
\ia \ibint |w|^{p+1}\log^a(2+\p^2w^2)  
\w\y\t\no\\
&-\frac{\lambda_1}{s^{b+1}}\ia\ibint\!\!
w^2\w
\y\t,\qquad\no
\end{align}
where
\begin{equation}\label{defb}
b=\frac{m_0(p+3)}{2}.
\end{equation}
Moreover, there exists
 $S_{2}\geq S_1$ such that for all $s \geq \TTT$, 
we have 
 \begin{equation}\label{posi1}
N_{m_0}(w(s),s)\geq -1.
\end{equation}
\end{prop}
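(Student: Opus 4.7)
The plan is to follow the two-step strategy implicit in the definition of $N_m$: first, combine Lemmas \ref{lem22} and \ref{LemJ_0} to derive a pointwise differential inequality for $H_m(w(s),s) = E(w(s),s) + mJ(w(s),s)$; second, exploit the multiplicative correction $s^{-b}$ and the additive $Ae^{-s}$ to kill respectively the Gronwall factor $\frac{m(p+3)}{2s}E$ coming from Lemma \ref{LemJ_0} and the exponentially small residual $Ce^{-s}$.

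For the first step, I would add Lemma \ref{lem22} to $m$ times Lemma \ref{LemJ_0}. The right-hand side contains the positive term $\frac{C}{s^{a+1}}\ibint |w|^{p+1}\log^a(2+\p^2w^2)\w\y$ from Lemma \ref{lem22}, but it is now competing with the negative $-\frac{m(p-1)}{2(p+1)s^{a+1}}\ibint|w|^{p+1}\log^a(2+\p^2w^2)\w\y$ term from Lemma \ref{LemJ_0}. Choosing $m = m_0$ large enough so that $\frac{m_0(p-1)}{2(p+1)} > 2C$, the sum becomes strictly negative; the same idea controls the $\frac{C}{s^2}\ibint w^2\w\y$ terms from both $\Sigma_1$ and $m\Sigma_2$ by the good $-\frac{m}{4s}\ibint w^2 \w\y$ term for $s \ge S_1$ large, and the higher order $\frac{Cm}{s^{a+2}}$ part of $\Sigma_2$ by the same nonlinear term. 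One also uses $E = H_m + \frac{m}{2s}\ibint w^2 \w\y$ to rewrite $\frac{m(p+3)}{2s}E$ as $\frac{m(p+3)}{2s}H_m + \frac{m^2(p+3)}{4s^2}\ibint w^2 \w\y$, where the $s^{-2}$ remainder again gets absorbed. This yields, for $s \ge S_1$ large,
\begin{equation*}
\frac{d}{ds}H_{m_0}(w,s)\le \frac{m_0(p+3)}{2s} H_{m_0}(w,s) - \frac12 \ibint (\partial_s w)^2 \w\y - \frac{\lambda}{s}\ibint |\grad w|^2\w\y - \frac{\lambda}{s}\ibint w^2\w\y - \frac{\lambda}{s^{a+1}}\ibint |w|^{p+1}\log^a(2+\p^2 w^2)\w\y + Ce^{-s}.
\end{equation*}

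The second step is the key algebraic cancellation: differentiating $s^{-b} H_{m_0}$ with $b = m_0(p+3)/2$ gives $s^{-b}\bigl(\frac{d}{ds}H_{m_0} - \frac{m_0(p+3)}{2s}H_{m_0}\bigr)$, exactly cancelling the Gronwall term. Taking $A(m_0) \ge C$ so that $Cs^{-b}e^{-s} - A e^{-s} \le 0$ removes the last obstruction and produces a pointwise differential inequality for $N_{m_0}$ whose right-hand side is a sum of nonpositive terms, each of them $s^{-b}$ times one of the good dissipative integrals. Integrating over $[s,s+1]$ and using that $\tau \mapsto \tau^{-b}$ is comparable to $s^{-b}$ on this interval gives exactly \eqref{DEC101}.

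The step I expect to be the main obstacle is the lower bound \eqref{posi1}. I would argue by contradiction: if $N_{m_0}(w(s_*),s_*) \le -2$ for some $s_* \ge \TTT$, then by the monotonicity just obtained $N_{m_0}(w(s),s)$ stays $\le -2$ thereafter, which forces $H_{m_0}(w(s),s) \le -s^b$ for $s$ large. I would then exploit the identity obtained by multiplying \eqref{A} by $w\w$ and integrating, yielding a differential inequality of the form $\frac{d}{ds}\ibint w^2 \w\y \ge -cE(w,s) + (\text{lower order}) \ge c s^{b}$ once $E$ is that negative (since the nonlinear contribution and $J$ can be absorbed by $H_{m_0}$ up to constants), so $\ibint w^2\w\y$ grows faster than any polynomial; combining this with a Jensen-type lower bound relating $E$ and $\ibint|w|^{p+1}\log^a\w\y$ produces a super-linear differential inequality for $\ibint w^2\w\y$ that blows up in finite $s$, contradicting the assumed global existence of the similarity-variable solution $w$ on $[\T,\infty)$.
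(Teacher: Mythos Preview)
Your proposal is correct and follows essentially the same route as the paper: combine Lemmas \ref{lem22} and \ref{LemJ_0}, fix $m_0$ so the nonlinear term becomes strictly negative, observe that differentiating $s^{-b}H_{m_0}$ with $b=\frac{m_0(p+3)}{2}$ kills the Gronwall factor, choose $A(m_0)$ to absorb the $e^{-s}$ residual, and integrate. For \eqref{posi1} the paper likewise argues by contradiction via the auxiliary quantity $I(s)=s^{-b}\!\ibint w^2\w\y$ (your $\ibint w^2\w\y$ differs only by the harmless $s^{-b}$ weight), uses the monotonicity of $N_{m_0}$, the identity from multiplying \eqref{A} by $w\w$ (i.e.\ Lemma \ref{LemJ_0}), and Jensen's inequality to reach a superlinear ODE $\frac{d}{ds}I\ge 1+Cs^{\frac{b(p-1)}4}I^{\frac{p+3}4}$ that forces finite-time blow-up of $I$, contradicting global existence of $w$. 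One small slip: from $\frac{d}{ds}\ibint w^2\w\y\ge cs^b$ alone you only get polynomial growth, not ``faster than any polynomial''; it is precisely the Jensen step feeding the nonlinear term back into the inequality that produces the blow-up.
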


\begin{pf}
From  the definition of $H_m(w(s),s)$ given  in \eqref{F0},   Lemmas  \ref{lem22} and \ref{LemJ_0},  we can write for all $s\ge \T$,
\begin{align}\label{DEC9}
\frac{d}{ds}H_{m}(w(s),s)\le & \frac{m(p+3)}{2s}H_{m}(w(s),s) -\frac12\ibint   (\partial_{s}w)^2\w \y\\
&-\Big(\frac{m(p-1)}{2(p+1)}-C_0-\frac{C_0 m}{s}\Big)
\frac{1}{s^{a+1}}
\ibint |w|^{p+1}\log^a(2+\p^2w^2)  
\w\y\no\\
&-\frac{m(p-1)}{4s}\ibint   |\grad w|^2\w \y-\Big( \frac{m}{4s}-\frac{C_0m}{s^2}-\frac{C_0}{s^2}
\Big) \ibint  w^2\w\y  \no \\
&+(C_0 m+C_0) e^{-s},\no
\end{align}
where $C_0$  stands for some universal constant depending only on $N,$ $p$ and $a$.
We first  choose    $m_0$ such that $\frac{m_0(p-1)}{4(p+1)}-C_0=0$,  so 
$$\frac{m_0(p-1)}{2(p+1)}-C_0-\frac{C_0m_0}{s}=m_0\Big(\frac{p-1}{4(p+1)}-\frac{C_0}{s}\Big).$$
We now choose    $S_1=S_1(a,p,N)$ large enough ($S_1\ge 1$), so that for all $s\ge S_1$, we have
\begin{align*}
\frac{p-1}{8(p+1)}-\frac{C_0}{s} \ge 0,\qquad  \qquad \qquad
\frac{m_0}{8}-\frac{C_0m_0}{s}-\frac{C_0}{s}\ge 0.
\end{align*}
Then, we deduce that  for all $s\ge \TT$,
\begin{align}\label{DEC911}
\frac{d}{ds}H_{m_0}(w(s),s)\ \ \le & \ \ \frac{m_0(p+3)}{2s}H_{m_0}(w(s),s) -\frac12\ibint   (\partial_{s}w)^2\w \y\\
&-\frac{\lambda_0}{s}\ibint   |\grad w|^2\w \y-\frac{\lambda_0}{s} \ibint  w^2\w\y  \no \\
&-\frac{\lambda_0}{s^{a+1}}
\ibint |w|^{p+1}\log^a(2+\p^2w^2)  
\w\y\no\\
&+(C_0 m_0+C_0) e^{-s},\no
\end{align}
 where $\lambda_0=\inf( \frac{m_0}{8},
\frac{m_0(p-1)}{4(p+1)} )$.

By using the definition of $N_{m_0}(w(s),s)$ given  in  \eqref{lyap1} together with  the estimate \eqref{DEC911}, we  easily prove that  for all  $s\ge \TT$,
\begin{align}\label{M11}
\frac{d}{ds}N_{m_0}(w(s),s)\ \ \le & \ \ -\frac1{2s^b}\ibint   (\partial_{s}w)^2\w \y\\
&-\frac{\lambda_0}{s^{b+1}}\ibint   |\grad w|^2\w \y-\frac{\lambda_0}{s^{b+1}} \ibint  w^2\w\y  \no \\
&-\frac{\lambda_0}{s^{a+b+1}}
\ibint |w|^{p+1}\log^a(2+\p^2w^2)  
\w\y\no\\
&-e^{-s}\Big(A(m_0) -C_0(m_0+1)\frac{1}{s^b}\Big).\no
\end{align}
We now choose $A(m_0) =C_0(m_0+1){S_1}^{-b}$,  so we have
 \begin{equation}\label{11jan1}
A(m_0) -\frac{C_0(m_0+1)}{s^b}\ge 0,\qquad  \forall  s\ge S_1.  
\end{equation}
By  integrating  in time between $s$ and $s+1$
the  inequality
(\ref{M11}) and using \eqref{11jan1}, we easily obtain
(\ref{DEC101}).
This concludes the proof of the first part of Proposition \ref{proplyap}.

\medskip

 We  prove    \eqref{posi1} here. 
 Arguing by contradiction, we assume that
there exists   
 $\tilde{s_1} \geq \TTT$ such that $N_{m_0}(w(\tilde{s_1}),\tilde{s_1})<-1$, where $S_2\ge S_1$  is large enough.

\bigskip

Now, we 
 consider $$I(w(s),s)=s^{-b}\ibint w^2  \w\y,\qquad   \forall s \geq \T,$$
where $b$ is defined in \eqref{defb}. Thanks to \eqref{6nov2018} and \eqref{F0}, we have  for  all $s \geq \T$
\begin{align}\label{6nov20181}
\frac{d}{ds}I(w(s),s)\  \ge&\  -(p+3)s^{-b}H_{m_0}(w(s),s)
+ \frac{1}{2s^{b}}(1-\frac{C_1}{s}) \ibint  w^2\w\y\no\\
&\ 
+\frac{p-1}{(p+1)s^{a+b}}(1-\frac{C_1}{s})\ibint |w|^{p+1}\log^a(2+\p^2 w^2)  
\w\y.
\end{align}
Let us  choose   $S_2=S_2(a,p,N)$  is large enough,  such that $1-\frac{C_1}{S_2}\ge \frac12$. So, we write  for  all $s \geq \TTT$ 
\begin{equation}\label{1jan2020}
\frac{d}{ds}I(w(s),s) \ge -(p+3)N_{m_0}(w(s),s)
+\frac{p-1}{2(p+1)s^{a+b}}\ibint |w|^{p+1}\log^a(2+\p^2 w^2)  
\w\y.
\end{equation}
 Since the energy $N_{m_0}(w(s),s)$ decreases in time, we
have $N_{m_0}(w(s),s)<-1$, for all $s\ge \tilde{s_1}$. Then, for all $s\ge \tilde{s_1}$
\begin{align}\label{2jan20201}
\frac{d}{ds}I(w(s),s) \ge p+3
+\frac{p-1}{2(p+1)s^{a+b}}\ibint |w|^{p+1}\log^a(2+\p^2 w^2)  
\w\y.
\end{align}
%
%
%
%
%
%
Thanks to \eqref{equiv1},  \eqref{equiv4bis} and \eqref{id1},  we get  for all $s\geq  \tilde{ s_1}$ 
\begin{equation}\label{13janv2bis}
\frac{1}{s^a}\int_{\er^N}   {|  w|^{p+1}}\log^{{a}}(2+\p^2 w^2  )\w \y\ge 
C\int_{\er^N}   |  w|^{\frac{p+3}2}\w \y-
 C.
\end{equation}
Therefore, by using \eqref{2jan20201} and \eqref{13janv2bis}, there exist $\tilde S_2\ge S_2$ large enough such that $ p+2-\frac{C}{(\tilde S_2)^b}>0$, we have  for all $s\ge \max (\tilde{s_1},\tilde S_2)$
\begin{align}\label{13jan3}
\frac{d}{ds}I(w(s),s) \ge 1
+\frac{C}{s^{b}}\ibint |w|^{\frac{p+3}2}  
\w\y.
\end{align}
Thanks  to    Jensen's inequality, we infer
\begin{align}\label{2jan202000000}
\frac{d}{ds}I(w(s),s)\ge 1+ Cs^{\frac{b(p-1)}
4 }   \Big(I(w(s),s) \Big)^{\frac{p+3}4 }.
\end{align}
\noindent 
This quantity must then tend to $\infty$ in finite time, which is a contradiction.
   Thus \eqref{posi1} holds.
 This concludes  the  proof of Proposition \ref{proplyap}.
\end{pf}
\subsection{Proof of Proposition   \ref{19dec3bis1}.}
Based on Proposition \ref{proplyap},    a bootstrap argument given in \cite{Qu99amuc}, 
we  are able to  adapt the analysis
performed in  
 \cite{GMSiumj04},  to prove the following key proposition:

\begin{prop}\label{19dec3bis1} 
For all  $q \geq 2$,  $\varepsilon>0$ and $R>0$
there exist   $\varepsilon_1=\varepsilon_1(q,R)>0$,
$\mu_1(q,R,\vv)>0$ and 
 $S_{3}(q,R,\varepsilon) \geq S_2$ such that,  for  all $s \geq \max  (-\log T, S_3),$
 we have
\begin{equation*}
(A_{q,R,\vv})\ \ \int_s^{s+1} \|w(\tau )\|_{L^{\pp +1}(\mathbf{B}_R)}^{(\pp+1)q} {\mathrm{d}\tau}  \leq K_{3}(q,R,\varepsilon )s^{\mu_1(q,R,\vv)}, \qquad \forall \vv\in (0,\vvv],
\end{equation*}
where  $K_{3}(q,R,\varepsilon )$ depends on $ p, a, N,q, R,\varepsilon, s_1 =\TT$  and
$\|w( {s}_1)\|_{H^1}$.
\end{prop}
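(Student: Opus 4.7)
My plan is to derive the estimate $(A_{q,R,\varepsilon})$ in two stages: first, a base estimate at low integrability harvested from the Lyapunov structure of Proposition \ref{proplyap}; then a bootstrap to arbitrary $q\geq 2$ in the spirit of Quittner \cite{Qu99amuc} and Giga--Matsui--Sasayama \cite{GMSiumj04}.

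\textbf{Base estimate.} Combining the dissipation inequality \eqref{DEC101} with the lower bound \eqref{posi1} and telescoping over consecutive time strips of length $1$, I would deduce that for all sufficiently large $s$,
\begin{equation*}
\int_s^{s+1}\!\ibint |w|^{p+1}\log^a(2+\ps^2 w^2)\,\w\,\y\,\t \ + \ \int_s^{s+1}\!\ibint \big(|\nabla w|^2 + w^2 + (\partial_\tau w)^2\big)\,\w\,\y\,\t \ \leq \ C\, s^{\nu_0},
\end{equation*}
for some exponent $\nu_0 = \nu_0(a, p, N)$. Since $\log(2+\ps^2 w^2)\sim \tfrac{2s}{p-1}$ on $\{|w|\geq 1\}$ for large $s$, splitting the domain according to whether $|w|^\varepsilon\log^a(2+\ps^2 w^2)$ is larger or smaller than $1$ (and handling the small set by a Chebyshev-type argument using the weighted $L^2$ control) lets me trade a small power of $|w|$ against the logarithmic factor, irrespective of the sign of $a$. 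This yields, for every $\varepsilon\in(0,1)$,
\begin{equation*}
\int_s^{s+1}\!\ibint |w|^{p+1-\varepsilon}\,\w\,\y\,\t \ \leq\ C_\varepsilon\, s^{\mu_0(\varepsilon)}.
\end{equation*}
Because $\w \geq e^{-R^2/4}$ on $\mathbf{B}_R$, this gives the seed estimate $\int_s^{s+1}\|w(\tau)\|_{L^{p+1-\varepsilon}(\mathbf{B}_R)}^{p+1-\varepsilon}\,\t \leq K(R,\varepsilon)\, s^{\mu_0}$, which is essentially the case $q=1$.

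\textbf{Bootstrap.} Starting from this seed, I would iterate the parabolic smoothing argument of Quittner on a nested sequence of balls $\mathbf{B}_R=\mathbf{B}_{R_0}\supset \mathbf{B}_{R_1}\supset \cdots$. At each step, viewing \eqref{A} in Duhamel form with source $e^{-ps/(p-1)}s^{a/(p-1)}f(\ps w) - \tfrac{1}{p-1}(1-\tfrac{a}{s})w$, and applying the local $L^r$--$L^{r'}$ smoothing estimates for the heat semigroup together with H\"older in time, one upgrades a bound $\int\|w\|_{L^{r_k}(\mathbf{B}_{R_k})}^{\alpha_k}\,\t \leq C s^{\mu_k}$ into one with $r_{k+1}>r_k$. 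The pointwise inequality $|f(\ps w)|\leq C\ps^p|w|^p\log^a(2+\ps^2 w^2)$ shows that each application of the source contributes only an extra polynomial factor in $s$, so that after finitely many steps one reaches $L^{p+1-\varepsilon}(\mathbf{B}_R)$ with arbitrary time exponent $(p+1-\varepsilon)q$, the parameter $\varepsilon_1(q,R)$ being fixed at the end to absorb the integrability loss.

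\textbf{Main obstacle.} The crucial difference with the homogeneous case of \cite{GMSiumj04} is the absence of scale invariance: the source carries both an explicit prefactor $s^{a/(p-1)}$ and, through $\log^a(2+\ps^2 w^2)$ with $\ps\sim e^{s/(p-1)}$, a nonlinear $s$-dependence that must be handled at every iteration. Checking that the polynomial exponents $\mu_k$ produced by the bootstrap remain finite and controlling the sign of $a$ uniformly in the two subregions of the base splitting constitute the main technical work; the payoff is that every error term introduced by the perturbation only inflates $\mu_1$ by a bounded amount, consistent with the polynomial growth allowed in the conclusion.
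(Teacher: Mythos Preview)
Your base estimate is essentially what the paper extracts from Proposition~\ref{proplyap} (stated there as Corollary~\ref{19dec3}), so that part is fine.

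The bootstrap, however, diverges from the paper's argument in a way that leaves a real gap. The paper does \emph{not} use Duhamel or heat-semigroup smoothing. Instead it introduces a localized energy $\mathcal{E}_\psi(w(s),s)$ with a cutoff $\psi$ supported in $\mathbf{B}_{2R}$, and proves two multiplier identities: testing \eqref{A} against $\partial_s w\,\psi^2\rho$ gives a two-sided polynomial bound on $\mathcal{E}_\psi$ (Proposition~\ref{prop:upElc}), hence $\|w\|_{L^{p+1-\varepsilon}(\mathbf{B}_R)}^{p+1-\varepsilon}\lesssim \|\nabla w\|_{L^2(\mathbf{B}_{2R})}^2 + s^{b+1}$; testing against $w\,\psi^2\rho$ gives $\|\nabla w\|_{L^2(\mathbf{B}_R)}^2\lesssim \|w\partial_s w\,\psi^2\|_{L^1(\mathbf{B}_{2R})}+s^{b+1}$ (Lemma~\ref{lemm:tmpq2}). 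The loop is then closed by writing $\|w\partial_s w\|_{L^1}\leq \|w\|_{L^\lambda}\|\partial_s w\|_{L^{\lambda'}}$, interpolating $\|\partial_s w\|_{L^{\lambda'}}$ between $L^2$ (controlled by dissipation) and $L^{(p+1)/p-\varepsilon}$ (controlled, via the equation, by $\||w|^{p+\tilde\varepsilon}\|$), and feeding this back through the first inequality. Crucially, this mechanism increases the \emph{time} exponent, $q\mapsto q+\tfrac{2}{p+1}$, while keeping the space exponent fixed at $p+1-\varepsilon$; the subcriticality $p<p_S$ enters only to ensure the H\"older exponents close (Proposition~6.4 of \cite{GMSiumj04}).

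Your proposed iteration instead raises the \emph{space} exponent $r_k$ via smoothing, which does not directly yield arbitrary time integrability of $\|w\|_{L^{p+1-\varepsilon}}$ unless you reach $L^\infty_x$---and for that you would need to check a subcriticality condition of the form $\tfrac{1}{\beta}+\tfrac{N}{2\alpha}<1$ on the source at each step, which you do not do. Moreover, the linear part of \eqref{A} is the Ornstein--Uhlenbeck operator $\Delta-\tfrac12 y\cdot\nabla$, not the heat operator; invoking ``local $L^r$--$L^{r'}$ smoothing for the heat semigroup'' is not immediately legitimate, and treating the drift as a bounded perturbation on $\mathbf{B}_R$ requires an argument you have not supplied. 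As written, the bootstrap step is too vague to be a proof, and the mechanism you describe is not the one that actually delivers the conclusion.
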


To prove Proposition \ref{19dec3bis1}, we will proceed as in \cite{GMSiumj04}. In fact, by using
Proposition \ref{proplyap},
 we easily  obtain the following Corollary:
\begin{coro}\label{19dec3} 
  For all $s\ge  \TTT,$  
 we have
\begin{equation}\label{2018N0}
-1\leq N_{m_0}(w(s),s) \leq K_4,
\end{equation}
\begin{equation}\label{18jan1bis}
 -K_5s^{b}\le 
H_{m_0}(w(s),s ) \leq K_5s^{b},
\end{equation}
\begin{equation}\label{18jan1}
 \int_{s}^{s+1}\iint\Big(|\grad w|^2+(\partial_sw)^{2}+w^2\Big) \w  {\mathrm{d}}y{\mathrm{d}}\tau\leq K_6s^{b+1},
\end{equation}
\begin{equation}\label{18jan10000}
\frac1{s^a}\int_{s}^{s+1}\int_{\er^N}   {|  w|^{p+1}}\log^{{a}}(2+\p^2 w^2  )\w \y
{\mathrm{d}}\tau\leq K_6s^{b+1},
\end{equation}
\begin{equation}\label{18jjjjj}
\iint w^2\w   \y\leq K_7s^{b+1},
\end{equation}
\begin{equation}\label{30ct1}
\frac1{s^a}\int_{\er^N}   {|  w|^{p+1}}\log^{{a}}(2+\p^2 w^2  )\w \y
\leq C\iint |\grad w|^2 \w  \y +K_8s^{b+1},
\end{equation}
\begin{equation}\label{30ct1bisbis}
 \iint |\grad w|^2 \w  \y \le 
\frac{C}{s^a}\int_{\er^N}   {|  w|^{p+1}}\log^{{a}}(2+\p^2 w^2  )\w \y
 +K_9s^{b+1},
\end{equation}
\begin{equation}\label{30ct2}
 \iint |\grad w|^2 \w  {\mathrm{d}}y
\leq Cs^{\frac{b+1}2}\sqrt{\iint (\partial_sw)^2 \w  {\mathrm{d}}y} +K_{10}s^{b+1},
\end{equation}
\begin{equation}\label{30ct24}
\int_{s}^{s+1}\Big( \iint |\grad w|^2 \w  {\mathrm{d}}y\Big)^2
\leq K_{11}s^{2b+2},
\end{equation}
\begin{equation}\label{3oct3}
\frac1{s^{2a}}\int_{s}^{s+1}\Big(\int_{\er^N}   {|  w|^{p+1}}\log^{{a}}(2+\p^2 w^2  )\w \y\Big)^2
{\mathrm{d}}\tau\leq K_{12}s^{2b+2},
\end{equation}
where  $b$ is defined in \eqref{defb} and where $K_4, K_5,K_6,... K_{12} $ depend on $ p, a, N,  s_1 =\TT$  and
$\|w( s_1)\|_{H^1}$.
\end{coro}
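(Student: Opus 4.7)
The plan is to derive all ten estimates by combining the Lyapunov property established in Proposition \ref{proplyap} with direct energy manipulations of equation \eqref{A}. First, the two-sided bound \eqref{2018N0} follows by combining \eqref{posi1} (lower bound) with the continuous monotonicity $\frac{d}{ds}N_{m_0}(w(s),s) \le 0$ extracted from the proof of Proposition \ref{proplyap} (cf.\ \eqref{M11} together with the choice of $A(m_0)$), which yields $N_{m_0}(w(s),s) \le N_{m_0}(w(s_1),s_1) =: K_4$ for all $s \ge s_1$. Since $N_{m_0} = s^{-b} H_{m_0} + A e^{-s}$, this immediately gives $|H_{m_0}| \le K_5 s^b$, namely \eqref{18jan1bis}. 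The integral bounds \eqref{18jan1} and \eqref{18jan10000} then follow by integrating \eqref{DEC101} on $[s,s+1]$ and using $N_{m_0}(s) - N_{m_0}(s+1) \le K_4 + 1$: the weights $s^{-b}$, $s^{-b-1}$, $s^{-a-b-1}$ appearing on the right-hand side of \eqref{DEC101} get transferred to the time integrals and produce the $O(s^{b+1})$ bounds claimed (and in fact the better $O(s^b)$ bound for $\int_s^{s+1}\iint (\partial_\tau w)^2 \w\y\t$).

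For the pointwise bound \eqref{18jjjjj}, I would propagate the integral bound just obtained. A mean value argument produces some $\tau_0 \in [s,s+1]$ with $\iint w^2(\tau_0)\w\y \le K s^{b+1}$, and then the elementary estimate
\[
\frac{d}{d\tau} \iint w^2 \w\y = 2\iint w\,\partial_\tau w\, \w\y \le \iint w^2 \w\y + \iint (\partial_\tau w)^2 \w\y,
\]
combined with Gronwall on $[\tau_0, s+1]$ and the integral control of $\int_s^{s+1}\iint (\partial_\tau w)^2\w\y\t$, extends the bound to every $s$.

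For the four remaining estimates \eqref{30ct1}--\eqref{3oct3}, the key ingredient is the identity obtained by multiplying equation \eqref{A} by $w\w$ and integrating by parts, which via \eqref{id1} gives
\[
\iint |\grad w|^2 \w\y = -\iint w\,\partial_s w\, \w\y - \frac{1}{p-1}\Bigl(1-\frac{a}{s}\Bigr) \iint w^2 \w\y + \frac{1}{s^a}\iint |w|^{p+1}\log^a(\p^2 w^2 + 2) \w\y.
\]
Combining this identity with the definition of $H_{m_0}$ and the sharp equivalence $F(u) \sim \frac{1}{p+1}|u|^{p+1}\log^a(u^2+2)$ as $|u|\to\infty$ (which via \eqref{id1} expresses $e^{-(p+1)s/(p-1)}s^{2a/(p-1)}F(\p w)$ as $\frac{1}{(p+1)s^a}|w|^{p+1}\log^a(\p^2 w^2+2)$ plus controlled error), together with the bounds $|H_{m_0}| \le K s^b$ and $\iint w^2 \w\y \le K s^{b+1}$, yields both \eqref{30ct1} and \eqref{30ct1bisbis}. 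Applying Cauchy--Schwarz to the cross term $\iint w\,\partial_s w\, \w\y$ in the same rearrangement and inserting $|H_{m_0}|\le Ks^b$ produces \eqref{30ct2}. Finally, squaring \eqref{30ct2} and integrating on $[s,s+1]$ against the $\int_s^{s+1}\iint (\partial_\tau w)^2\w\y\t \le K s^b$ bound yields \eqref{30ct24}, and \eqref{3oct3} follows by squaring \eqref{30ct1} and inserting \eqref{30ct24}.

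The main obstacle is the careful tracking of the sub-leading contribution of $F$ (corresponding to the splitting $F = F_1 + F_2 + \frac{xf(x)}{p+1}$ via \eqref{defF2}--\eqref{defF123} and the appendix equivalences \eqref{equiv1}--\eqref{equiv3}), which must be shown to contribute only error terms of size $\frac{1}{s^{a+1}}\iint |w|^{p+1}\log^a(\p^2 w^2+2)\w\y$ plus exponentially small pieces. Only then does the leading balance between $\iint |\grad w|^2\w\y$ and $\frac{1}{s^a}\iint |w|^{p+1}\log^a(\p^2 w^2 + 2)\w\y$ survive with the explicit coefficient $\frac{p-1}{2(p+1)}$ that makes the $H_{m_0}$-based identities close up and produces \eqref{30ct2} in the form claimed.
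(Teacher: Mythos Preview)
Your proposal is correct and follows essentially the same route as the paper, which does not spell out a proof of this corollary but simply refers to \cite{GMSiumj04} and notes (Remark \ref{r1}) that the only new features are the polynomial factors $s^{b+1}$ and the replacement of $\frac{|u|^{p+1}}{p+1}$ by $F(u)$, handled via the equivalence $uf(u)-(p+1)F(u)\sim \frac{2a}{p+1}|u|^{p+1}\log^{a-1}(2+u^2)$. Your outline---monotonicity of $N_{m_0}$ for \eqref{2018N0}--\eqref{18jan1bis}, integration of \eqref{DEC101} for \eqref{18jan1}--\eqref{18jan10000}, mean value plus Gronwall for \eqref{18jjjjj}, the multiplier identity combined with the definition of $E$ (eliminating the nonlinear term up to the $F_1,F_2$ errors of \eqref{equiv2}--\eqref{equiv3}) for \eqref{30ct1}--\eqref{30ct2}, and squaring/integrating for \eqref{30ct24}--\eqref{3oct3}---is exactly the Giga--Matsui--Sasayama scheme with the logarithmic corrections, and the ``obstacle'' you flag is precisely the one the paper singles out.
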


\begin{nb}\label{r1}
 Let us mention that, the estimates obtained  in the above corollary  are similar to the ones
obtained in the pure power case treated in  \cite{GMSiumj04} except for the following features:
\begin{itemize}
\item The presence of the  term $Ks^{b+1}$ instead of $K$.
\item In  some estimates, we have the term $F(u)$ instead  of $\frac{|u|^{p+1}}{p+1}$ 
in the pure power case.  We easily overcome this problem
 thanks to the fact that   $uf(u)-(p+1)\int_0^uf(v)
{\mathrm{d}}v\sim  
\frac{2a}{p+1}|u|^{p+1}\log^{a-1}(2+u^2)$, as $u\to \infty$.
\end{itemize}
\end{nb}

\bigskip

In order to prove  
Proposition   \ref{19dec3bis1},   we introduce the following local functional:
\begin{equation}\label{Eloc}
\mathcal{E}_\psi(w(s),s)=\iint \Big(\frac{1}{2}|\grad w|^2+\frac{1}{2(p-1)}w^2-e^{-\frac{(p+1)s}{p-1}}s^{\frac{2a}{p-1}}   F(\p w)\Big)\psi^2(y)\w \y, 
\end{equation}
where  $\psi \in
 \mathcal{C}_0^\infty(\mathbb{R}^N)$
 satisfies 
\begin{equation} \label{equ:psiy}
 0 \leq \psi(y) \leq 1, \quad \psi(y) = \left\{\begin{array}{lcl} 1 & \quad\text{on} &\quad \mathbf{B}_R \\
0 & \quad \text{on} & \quad \mathbb{R}^N\setminus \mathbf{B}_{2R} \end{array} \right.,
\end{equation}
where  $R>0$.
An argument similar to that in \cite{GMSiumj04}, implies  the following estimate: 
\begin{prop} \label{prop:upElc} 
There exist
 positive constants $ K_{13}=K_{13}(R)>0$  and 
 $S_{4}\geq S_2$ such that,  for  all $s \geq \max  (-\log T, S_4),$
 we have
\begin{equation}\label{Eloc1}
- K_{13}(R)s^{b+1} \leq \mathcal{E}_{\psi}(w(s),s) \leq  K_{13}(R)s^{b+1}, 
\end{equation}
where 
$ K_{13}$
depends on $ p, a, N,  R,  s_1 =\TT$  and
$\|w( s_1)\|_{H^1}$.
\end{prop}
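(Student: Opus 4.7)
\textbf{Proof proposal for Proposition \ref{prop:upElc}.}

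The strategy is to adapt the global argument of Proposition \ref{proplyap} to a localized version weighted by $\psi^2$, using the already established global controls of Corollary \ref{19dec3} to absorb all the extra terms produced by the cutoff. Concretely, I would first compute $\frac{d}{ds}\mathcal{E}_\psi(w(s),s)$ by multiplying \eqref{A} by $\partial_{s}w\cdot\psi^2\rho$ and integrating. The outcome is an analogue of \eqref{E00}, namely
\begin{equation*}
\frac{d}{ds}\mathcal{E}_\psi(w(s),s)=-\iint(\partial_s w)^2\psi^2\rho\,\y+\widetilde{\Sigma}_1^1(s)+\widetilde{\Sigma}_1^2(s)+\widetilde{\Sigma}_1^3(s)+\mathcal{R}_\psi(s),
\end{equation*}
where $\widetilde{\Sigma}_1^j(s)$ are the localized counterparts of the $\Sigma_1^j(s)$ in Lemma \ref{lem22}, and
$\mathcal{R}_\psi(s)=-\iint\nabla w\cdot\nabla(\psi^2)\,\partial_s w\,\rho\,\y$ is the commutator term coming from the integration by parts against the cutoff. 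Since $\nabla\psi$ is supported in the annulus $\mathbf{B}_{2R}\setminus\mathbf{B}_R$, Cauchy--Schwarz yields
\begin{equation*}
|\mathcal{R}_\psi(s)|\le\tfrac{1}{4}\iint(\partial_s w)^2\psi^2\rho\,\y+C_R\iint|\nabla w|^2\rho\,\y,
\end{equation*}
and the last integral is controlled by Corollary \ref{19dec3} (estimate \eqref{30ct1bisbis} combined with \eqref{18jan10000}) by $C s^{b+1}$.

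Next, mimicking Lemma \ref{LemJ_0}, I would introduce the local companion $J_\psi(w(s),s)=-\frac{1}{2s}\iint w^2\psi^2\rho\,\y$ and set
$H_{m,\psi}(w(s),s)=\mathcal{E}_\psi(w(s),s)+m\,J_\psi(w(s),s)$. An identical computation, where the gradient integration by parts now produces an additional commutator term of the form $\tfrac{1}{s}\iint w\nabla w\cdot\nabla(\psi^2)\rho$, gives a localized analogue of \eqref{DEC911}. All the extra cutoff-induced terms involve the quantities $\iint|\nabla w|^2\rho$, $\iint w^2\rho$, $\iint |w|^{p+1}\log^a(2+\phi^2w^2)\rho$ and $\iint(\partial_sw)^2\rho$ integrated only on the support of $\nabla\psi$, hence are dominated by their global counterparts, which are bounded by $C(R)s^{b+1}$ via Corollary \ref{19dec3}. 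Choosing $m_0$ as before and defining $N_{m_0,\psi}(w(s),s)=s^{-b}H_{m_0,\psi}(w(s),s)+A(m_0)e^{-s}$, I obtain that $N_{m_0,\psi}(w(s+1),s+1)-N_{m_0,\psi}(w(s),s)\le Cs^{-b}\cdot s^{b+1-1}=C/s$ at worst, and integrating in time yields the upper bound $\mathcal{E}_\psi(w(s),s)\le K_{13}(R)s^{b+1}$.

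For the lower bound, I would argue by contradiction as in the proof of \eqref{posi1}. Assuming $\mathcal{E}_\psi(w(\tilde s_1),\tilde s_1)<-K_{13}(R)s^{b+1}$ at some $\tilde s_1\ge\TTTTT$ large, I introduce $I_\psi(w(s),s)=s^{-b-1}\iint w^2\psi^2\rho\,\y$ and compute its derivative along \eqref{A}. The computation parallels \eqref{6nov20181}: the main good terms are $-(p+3)s^{-b-1}\mathcal{E}_\psi(w(s),s)$ and the positive nonlinear term localized by $\psi^2$, while all error terms (including new ones proportional to $\iint w\,\nabla w\cdot\nabla(\psi^2)\rho/s^{b+1}$) are absorbed thanks to Corollary \ref{19dec3} and the choice of $S_4$ large enough. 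Combining this with the localized analogue of \eqref{13janv2bis}, Jensen's inequality then forces $I_\psi(w(s),s)$ to blow up in finite $s$-time, contradicting the global existence of $w$ as a classical solution of \eqref{A} (guaranteed up to the blow-up time $T$ of $u$).

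The \emph{main obstacle} I expect is the careful bookkeeping of all the commutator terms generated by $\nabla\psi$ and by $\partial_s$ acting on $\psi^2\rho$: each of these has to be bounded by a fixed polynomial in $s$ using only the \emph{global} controls of Corollary \ref{19dec3} (applied on the annular support of $\nabla\psi$), since no improved local information is yet available. Once these terms are shown to be of order $s^{b+1}$ uniformly, the upper and lower bounds follow by respectively integrating the differential inequality and running the blow-up contradiction argument as in Proposition \ref{proplyap}.
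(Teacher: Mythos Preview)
Your lower-bound argument by contradiction via the localized quantity $I_\psi$ is essentially the paper's approach and is fine in spirit; the only point you gloss over is that, once you assume $\mathcal{E}_\psi(w(\tilde s_1),\tilde s_1)<-K_{13}(R)\tilde s_1^{\,b+1}$, you must propagate this negativity over a fixed time window before the blow-up ODE kicks in. The paper does this by controlling $\displaystyle\int_{s^*}^{s}\frac{d}{d\tau}\Big(\tfrac{p+3}{\tau^{b+1}}\mathcal{E}_\psi\Big)\,d\tau\le Q_8(s-s^*)$, which in turn relies on the differential inequality for $\frac{d}{ds}\mathcal{E}_\psi$ established in the upper-bound part.

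Your upper-bound strategy, however, has a real gap. The commutator term $\mathcal{R}_\psi(s)=-\iint\nabla w\cdot\nabla(\psi^2)\,\partial_s w\,\rho\,\y$ that appears in $\frac{d}{ds}\mathcal{E}_\psi$ carries \emph{no} factor of $1/s$; after Cauchy--Schwarz it contributes $C_R\iint|\nabla w|^2\rho$, for which Corollary~\ref{19dec3} only gives the time-integrated bound $\int_s^{s+1}\iint|\nabla w|^2\rho\le K_6 s^{b+1}$. Feeding this into your localized $N_{m_0,\psi}$ yields
\[
N_{m_0,\psi}(w(s+1),s+1)-N_{m_0,\psi}(w(s),s)\;\le\; C\,s^{-b}\cdot s^{b+1}\;=\;Cs,
\]
not $C/s$ as you wrote (your ``$s^{b+1-1}$'' is unjustified). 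Summing then gives $N_{m_0,\psi}=O(s^2)$ and hence only $\mathcal{E}_\psi=O(s^{b+2})$, one power too many. The localized Lyapunov machinery cannot absorb a commutator of order~$1$ by good terms that all come with a $1/s$.

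The paper avoids this by \emph{not} rebuilding a local Lyapunov functional. It uses your first computation to get $\int_s^{s+1}\big|\frac{d}{d\tau}\mathcal{E}_\psi\big|\,d\tau\le Q_1 s^{b+1}$ directly from Corollary~\ref{19dec3}, and separately bounds $\int_s^{s+1}\mathcal{E}_\psi(w(\tau),\tau)\,d\tau\le Q_3 s^{b+1}$ by estimating $\mathcal{E}_\psi$ pointwise from above via its definition together with \eqref{18jan1bis}, \eqref{18jjjjj} and \eqref{18jan10000}. The mean-value theorem then produces $\sigma(s)\in[s,s+1]$ with $\mathcal{E}_\psi(w(\sigma),\sigma)\le Q_3 s^{b+1}$, and writing $\mathcal{E}_\psi(w(s),s)=\mathcal{E}_\psi(w(\sigma),\sigma)+\int_{\sigma}^{s}\frac{d}{d\tau}\mathcal{E}_\psi$ gives the pointwise bound $\le Q_4 s^{b+1}$ without ever integrating from a fixed initial time. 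This local-in-time trick is exactly what keeps the exponent at $b+1$.
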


\begin{pf}Most of the steps of the proof  are the same as
in the pure power case  treated in  \cite{GMSiumj04}  and some others are more delicate.
For that reason, 
we leave the proof to Appendix \ref{ap:upELc}.
\end{pf}
%
%
With Proposition \ref{prop:upElc}, we are in a position to claim the following:
\begin{lem}\label{rema:boundLpW12} There exists a
 positive constant   $ K_{14}(R,\varepsilon )>0$   such that,  for  all $s \geq \max  (-\log T, S_4)$
 
\begin{equation}\label{31oct123}
 \|w(s )\|_{L^{\pp +1}(\mathbf{B}_R)}^{(\pp+1)}   \leq K_{14}(R,\varepsilon ) \|\grad w \|^2_{L^2(\mathbf{B}_{2R})}+K_{14}(R,\varepsilon )s^{b+1}, \ \ \forall  \vv \in (0,p-1),
\end{equation}
where 
$ K_{14}(R,\varepsilon )$
depends on $ p, a, N,  R, \varepsilon, s_1 =\TT$  and
$\|w( s_1)\|_{H^1}$.
\end{lem}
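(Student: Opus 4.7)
The plan is to combine the upper bound on the localised energy $\mathcal{E}_\psi$ provided by Proposition~\ref{prop:upElc} with a pointwise comparison between $|w|^{\pp+1}$ and the nonlinear density appearing in $\mathcal{E}_\psi$. From the definition \eqref{Eloc}, I directly rearrange
\[
\int_{\er^N} e^{-\frac{(p+1)s}{p-1}} s^{\frac{2a}{p-1}} F(\p w)\, \psi^2 \rho(y)\, dy = \tfrac12 \int_{\er^N}|\grad w|^2\psi^2\rho(y)\, dy + \frac{1}{2(p-1)}\int_{\er^N} w^2\psi^2\rho(y)\, dy - \mathcal{E}_\psi(w(s),s).
\]
Using $\mathcal{E}_\psi(w(s),s) \ge -K_{13}(R)s^{b+1}$ from Proposition~\ref{prop:upElc} together with \eqref{18jjjjj} to control the $L^2$-term, and the fact that $\psi^2 \rho \le \mathbf{1}_{\mathbf{B}_{2R}}$, I deduce
\[
\int_{\er^N} e^{-\frac{(p+1)s}{p-1}} s^{\frac{2a}{p-1}} F(\p w)\, \psi^2 \rho(y)\, dy \le \tfrac12 \|\grad w\|_{L^2(\mathbf{B}_{2R})}^2 + C(R)\, s^{b+1}.
\]

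The core of the argument is then the pointwise estimate
\[
|w|^{\pp+1} \le C_\varepsilon\, e^{-\frac{(p+1)s}{p-1}} s^{\frac{2a}{p-1}} F(\p w) + C_\varepsilon, \qquad \forall w \in \er,\ \forall s \ge s_0(\varepsilon),
\]
with $C_\varepsilon$ depending only on $p,a,\varepsilon$. For $|w| \le 1$ this is trivial. For $|w| \ge 1$ one has $|\p w| \ge \p \to \infty$, so invoking the asymptotics $F(u) \sim \frac{|u|^{p+1}}{p+1}\log^a(u^2+2)$ as $|u| \to \infty$ (compare \eqref{equiv1}--\eqref{equiv3}) together with identity \eqref{id1} yields
\[
e^{-\frac{(p+1)s}{p-1}} s^{\frac{2a}{p-1}} F(\p w) \ge \frac{c}{s^a}\,|w|^{p+1}\log^a(2+\p^2 w^2) - C,
\]
and the claim reduces to showing $s^{-a}|w|^\varepsilon \log^a(2+\p^2 w^2) \ge c(\varepsilon)>0$ uniformly for $|w|\ge 1$ and $s$ large. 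For $a \ge 0$ this is immediate from $\log(2+\p^2 w^2) \ge \log \p^2 \sim 2s/(p-1)$. For $a < 0$ I bound $\log(2+\p^2 w^2) \le Cs + 2\log|w|$ and set $t = 2\log|w|/s \ge 0$; then it suffices to show that $g(t) = e^{\varepsilon ts/2}(C+t)^a$ is bounded below by a positive constant uniformly in $s$ large. A direct derivative computation gives $g'(t) = e^{\varepsilon ts/2}(C+t)^{a-1}[\tfrac{\varepsilon s}{2}(C+t)+a]$, which is strictly positive on $[0,\infty)$ as soon as $\varepsilon sC/2 > -a$, hence for $s$ large. So $g$ is monotone increasing and $\inf_{t\ge 0} g(t) = g(0) = C^a > 0$.

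Once the pointwise inequality is established, I multiply by $\psi^2\rho(y)$, integrate over $\er^N$, combine with the energy bound derived in the first paragraph, and finally use $\rho(y) \ge e^{-R^2}$ together with $\psi \equiv 1$ on $\mathbf{B}_R$ to pass from the weighted integral to $\|w(s)\|_{L^{\pp+1}(\mathbf{B}_R)}^{\pp+1}$, which produces \eqref{31oct123} with the required constant $K_{14}(R,\varepsilon)$. The main obstacle is the uniform lower bound in the case $a<0$: there the factor $s^{-a}\log^a(2+\p^2 w^2)$ degenerates in $s$ and must be offset by the $|w|^\varepsilon$ gain; the monotonicity of $g$ confirms that the worst case indeed occurs at the threshold $|w|=1$, where the $s$-dependences cancel exactly. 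The rest of the argument consists of elementary rearrangements and routine estimates on $\rho$ and $\psi$.
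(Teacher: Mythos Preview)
Your proof is correct and follows essentially the same route as the paper: rearrange the definition of $\mathcal{E}_\psi$ using the lower bound from Proposition~\ref{prop:upElc} (together with \eqref{18jjjjj}) to control the nonlinear integral by the gradient term plus $s^{b+1}$, then invoke a pointwise inequality of the form $|z|^{\pp+1}\le C(\varepsilon)\,e^{-\frac{(p+1)s}{p-1}}s^{\frac{2a}{p-1}}F(\phi z)+C(\varepsilon)$ and use the properties of $\psi$ and $\rho$ on $\mathbf{B}_R$. The only difference is that the paper simply quotes this pointwise inequality as \eqref{equiv4bis} from Lemma~\ref{lemm:esth}, whereas you re-derive it in place, with an explicit monotonicity argument for the case $a<0$; your derivation is sound (note $\rho(y)\ge e^{-R^2/4}$ rather than $e^{-R^2}$, and your argument yields a threshold $s_0(\varepsilon)$ instead of the uniform $s\ge 1$ claimed in \eqref{equiv4bis}, but this is harmless for the application).
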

\begin{pf} From \eqref{Eloc1} and the definition of $\mathcal{E}_\psi$ in \eqref{Eloc}, we have
for all $ s \geq \max  (-\log T, S_4),$
\begin{equation}\label{Eloc3}
e^{-\frac{(p+1)s}{p-1}}s^{\frac{2a}{p-1}} \iint  F(\p w)\psi^2 \w \y \le C \iint |\grad w|^2\psi^2 \w \y+K_{13}(R)s^{b+1}.
\end{equation}
By exploiting  \eqref{equiv4bis}, we write  for all $s \geq \max  (-\log T, S_4),$
\begin{equation}\label{Eloc4}
\ibint |w|^{\pp+1}\psi^2\w \y \le C \iint |\grad w|^2\psi^2\w \y+K_{13}(R)s^{b+1}+C(\varepsilon)e^{-s}, \quad \forall  \vv \in (0,p-1). 
\end{equation}
Thus, \eqref{31oct123} follows from \eqref{Eloc4} and the property of $\psi$. This conclude the proof of Lemma \ref{rema:boundLpW12}
\end{pf}
By \eqref{31oct123}, the proof of estimate $(A_{q,R,\vv})$  is available when we have
\begin{equation}\label{equ:keyintW}
\int_s^{s+1} \|\grad w(\tau)\|_{L^2(\mathbf{B}_{R})}^{2q} \t \leq K_{15}(q,R,\vv )s^{\mu_2(q,R,\vv)},\qquad \forall s\geq \max  (-\log T, S_3), 
\end{equation}
for some $\mu_2(q,R,\vv)>(b+1)q$.
Note from \eqref{30ct24}  that \eqref{equ:keyintW} already holds in the case $q = 2$.

\medskip

 In order to derive \eqref{equ:keyintW} for all $q > 2$, we need the following result:
\begin{lem} \label{lemm:tmpq2} 
There exist
 positive constants $ K_{16}(R)>0$  and 
 $S_5\geq S_4$ such that,  we have
\begin{equation}\label{equ:boundLp123}
\|\grad w\|_{L^2(\mathbf{B}_R)}^{2} \leq C\| w \partial_s w \psi^2\|_{L^1(\mathbf{B}_{2R})}+K_{16}(R)s^{b+1} , \ \ \forall s \geq \max  (-\log T, S_5).
\end{equation}
\end{lem}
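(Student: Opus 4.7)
My plan is to multiply equation \eqref{A} by $w\psi^{2}\w$ and integrate over $\er^{N}$. Since $\psi$ has compact support, integration by parts on the diffusive term is justified, and combined with identity \eqref{id1} for the nonlinear source, it yields
\begin{equation*}
\iint |\grad w|^{2}\psi^{2}\w\y = -\iint w\,\partial_{s}w\,\psi^{2}\w\y - 2\iint w\,\psi\,\grad w\!\cdot\!\grad\psi\,\w\y - \frac{1-\frac{a}{s}}{p-1}\iint w^{2}\psi^{2}\w\y + \mathcal{N}(s),
\end{equation*}
where $\mathcal{N}(s) := \frac{1}{s^{a}}\iint|w|^{p+1}\log^{a}(2+\p^{2}w^{2})\psi^{2}\w\y$. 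The time-derivative term is controlled in absolute value by $\|w\,\partial_{s}w\,\psi^{2}\|_{L^{1}(\mathbf{B}_{2R})}$ since $\rho\leq 1$, and the cross term by a Cauchy--Schwarz estimate, absorbing a small fraction of $\iint|\grad w|^{2}\psi^{2}\w\y$ into the left side at the cost of $C\iint w^{2}\w\y$, which is $O(s^{b+1})$ by \eqref{18jjjjj}.

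The decisive step is a sharp \emph{lower} bound on $\mathcal{N}(s)$ in terms of the gradient and $L^{2}$ integrals, so as to move it across the identity above. From the definition of $\mathcal{E}_{\psi}$ in \eqref{Eloc} one reads off
\begin{equation*}
\iint e^{-\frac{(p+1)s}{p-1}}s^{\frac{2a}{p-1}}F(\p w)\,\psi^{2}\w\y = \frac{1}{2}\iint|\grad w|^{2}\psi^{2}\w\y + \frac{1}{2(p-1)}\iint w^{2}\psi^{2}\w\y - \mathcal{E}_{\psi}(w(s),s),
\end{equation*}
and Proposition \ref{prop:upElc} gives $|\mathcal{E}_{\psi}|\leq K_{13}(R)s^{b+1}$. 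The asymptotic relation \eqref{id2} between $F(\p w)$ and $\frac{\p w f(\p w)}{p+1}$, together with \eqref{id1}, yields $\frac{\mathcal{N}(s)}{p+1} \geq \iint e^{-\frac{(p+1)s}{p-1}}s^{\frac{2a}{p-1}}F(\p w)\,\psi^{2}\w\y - \frac{C}{s}\mathcal{N}(s) - Ce^{-s}$. Solving for $\mathcal{N}(s)$ and combining with the previous identity, we obtain for $s$ sufficiently large
\begin{equation*}
\mathcal{N}(s) \geq \frac{p+1}{2}\Big(1-\frac{C}{s}\Big)\iint |\grad w|^{2}\psi^{2}\w\y + \frac{p+1}{2(p-1)}\Big(1-\frac{C}{s}\Big)\iint w^{2}\psi^{2}\w\y - K\,s^{b+1}.
\end{equation*}

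Inserting this lower bound into the integration-by-parts identity, the coefficient of $\iint|\grad w|^{2}\psi^{2}\w\y$ on the right is $\frac{p+1}{2}(1-\frac{C}{s})>1$ for $s$ large; after moving it to the left, the net coefficient is bounded above by $-\frac{p-1}{4}$. Dividing by this negative quantity and \emph{flipping the inequality}, then invoking Cauchy--Schwarz on the cross term with a small absorption parameter and \eqref{18jjjjj} for the $L^{2}$ norm, we arrive at
\begin{equation*}
\iint|\grad w|^{2}\psi^{2}\w\y \leq C\,\|w\,\partial_{s}w\,\psi^{2}\|_{L^{1}(\mathbf{B}_{2R})} + K\,s^{b+1}.
\end{equation*}
A final use of $\rho(y)\geq e^{-R^{2}/4}$ on $\mathbf{B}_{R}$ makes the weighted and unweighted $L^{2}$ gradient norms comparable on $\mathbf{B}_R$ and gives \eqref{equ:boundLp123}. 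The main obstacle is the sign-flipping step, which succeeds precisely because the nonlinearity is superlinear: $p>1$ makes $\frac{p+1}{2}>1$, and this strict inequality is what drives the rearrangement; the lower-order logarithmic discrepancies between $F(\p w)$ and $\frac{\p w f(\p w)}{p+1}$ only generate $O(1/s)$ corrections, harmless once $s$ is taken large enough.
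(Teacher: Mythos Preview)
Your proof is correct and follows essentially the same route as the paper. Both arguments multiply \eqref{A} by $w\psi^{2}\rho$, integrate, and exploit the bound on $\mathcal{E}_{\psi}$ from Proposition~\ref{prop:upElc} together with the relation $F(\phi w)\approx\frac{\phi w f(\phi w)}{p+1}$ (i.e.\ \eqref{id2}) to eliminate the nonlinear term. The only organizational difference is that the paper substitutes the definition of $\mathcal{E}_{\psi}$ directly into the multiplied identity to obtain a single formula (their \eqref{1nov1}) in which the nonlinear term carries a \emph{negative} coefficient and can simply be dropped for $s$ large, whereas you isolate a lower bound on $\mathcal{N}(s)$ first and then perform the ``sign-flipping'' rearrangement; algebraically these are equivalent. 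A second minor difference: the paper controls the cross term $\int w\psi\nabla w\cdot\nabla\psi\,\rho$ by an integration by parts (their \eqref{1fev6}), turning it into $C\int w^{2}\rho\le K s^{b+1}$, while you use Cauchy--Schwarz with absorption; both work.
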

\begin{pf}
Multiplying equation \eqref{A} with $w\w\psi^2,$ integrating over $\mathbb{R}^N$ and using the definition of $\mathcal{E}_\psi(w(s),s)$  given in \eqref{Eloc},  we write
\begin{align}\label{1nov1}
\ibint  |\nabla w|^2\psi^2\w\y =& \frac4{p-1}\ibint  w\partial_sw \psi^2\w\y  +\underbrace{
\frac{2(p+3)}{p-1}\mathcal{E}_\psi(w(s),s)
}_{\Sigma^1_{2}(s)}
\\
&-\frac{2}{(p+1)s^{a}}\ibint |w|^{p+1}\log^a(2+\p^2 w^2)  
 \psi^2\w\y\no\\
&+\underbrace{ \frac8{p-1}\int_{\mathbb{R}^N}  w \nabla w . \nabla  \psi   \psi \w \y}_{\Sigma^2_{2}(s)}
\underbrace{-\frac1{p-1}(1+\frac{4a}{(p-1)s})\ibint   w^2\psi^2\w\y}_{\Sigma^3_{2}(s)}\no\\
&+\underbrace{\frac{2(p+3)}{p-1}
e^{-\frac{(p+1)s}{p-1}}s^{\frac{2a}{p-1}}\ibint \big( F(\p w)-\frac{\p wf(\p w)}{p+1}\big)\ \psi^2w \y}_{\Sigma^5_{2}(s)}. \no
\end{align}
From \eqref{Eloc1}, \eqref{1fev6} and \eqref{18jjjjj} we infer
 for all  $s \geq \max  (-\log T, S_4),$
\begin{equation}\label{jan211}
\Sigma^1_{2}(s)+\Sigma^2_{2}(s)+\Sigma^3_{2}(s)\le  K_{17}(R)s^{b+1}.
\end{equation}
According to the  the estimates   \eqref{id2}  and  the identity \eqref{id1}, we get
  for all  $s \geq \max  (-\log T, S_4),$
\begin{equation}\label{sigma111b}
\Sigma_{2}^{5}(s)\le
\frac{C_2}{s^{a+1}}\ibint |w|^{p+1}\log^a(2+\p^2w^2)\w \y+  C e^{-s}.
\end{equation}
Hence, using  \eqref{1nov1}, \eqref{jan211}  and \eqref{sigma111b}, yields  for all  $s \geq \max  (-\log T, S_4),$
\begin{align}\label{1jan1}
\ibint  |\nabla w|^2\psi^2\w\y \le
&-\frac{2}{(p+1)s^{a}}\Big(1-\frac{(p+1)C_2}{2s}\Big)\ibint |w|^{p+1}\log^a(2+\p^2 w^2)  
 \psi^2\w\y\no\\
 &+ \frac4{p-1}\ibint  w\partial_sw \psi^2\w\y+K_{17}s^{b+1}+Ce^{-s}.
\end{align}
Taking $S_5\ge S_4$ large enough such that $1-\frac{(p+1)C_2}{2S_5}>0$, we have for all  $s \geq \max  (-\log T, S_5),$
\begin{equation*}
\ibint  |\nabla w|^2 \psi^2\w\y \le \frac4{p-1}\ibint   w\partial_sw\ \psi^2w\y+K_{17}(R)s^{b+1}+Ce^{-s}.
\end{equation*}
Thus, \eqref{equ:boundLp123} follows from the property of $\psi$. This ends the proof  of Lemma \ref{lemm:tmpq2}.
\end{pf}
Now, we are ready to give the proof of Proposition    \ref{19dec3bis1}.\\

\medskip

  {\it{Proof of Proposition    \ref{19dec3bis1}:}}
[\textbf{Proof of \eqref{equ:keyintW} for all $q \geq 2 $ by a bootstrap argument}] \\
The proof is obtained by following the same 
part  in \cite{GMSiumj04}. However,   as explained before (see Remarks \ref{r1}),  in our case we  have two  additional problems. 
Let $R>0$ and  suppose that we have 
 \begin{equation}\label{equ:keyintWbis}
\int_s^{s+1} \|\grad w(\tau)\|_{L^2(\mathbf{B}_{4R})}^{2q} \t \leq K_{15}(q,4R,\vv )s^{\mu_2(q,4R,\vv)}, \qquad \forall s \geq
 \max  (-\log T, S_3), 
\end{equation}
for some $\mu_2(q,4R,\vv)>0$
and  for some $q \geq 2$.

\medskip

Combining   \eqref{equ:keyintWbis}  and \eqref{31oct123}, we write for all $s \geq \max  (-\log T, \tilde{S_3})$, 
\begin{equation}\label{31oct123bis2}
\int_s^{s+1}  \|w(\tau )\|_{L^{\pp +1}(\mathbf{B}_{2R})}^{q(\pp+1)} d\tau  \leq  K_{18}(q,R,\vv )s^{\mu_3(q,R,\vv)}, \ \ \forall  \vv \in (0,p-1),
\end{equation}
for some $\mu_3(q,R,\vv)>\mu_2(q,R,\vv)$.
where $\tilde{S_3}=\max (S_3,S_5)$. 
Thus,
we  use   \eqref{18jan1},  \eqref{31oct123bis2} and apply Lemma  \ref{lemm:intpola} with $\alpha = q(\pp+1)$, $\beta =\pp+1$, $\gamma = \delta  = 2$ to get that for all
 $s \geq \max  (-\log T, \tilde{S_3})$, 
\begin{equation}\label{16jan1}
 \|w(s)\|_{L^\lambda(\mathbf{B}_{2R})} \leq K_{19}(q,R,\vv ) s^{\mu_4(q,R,\vv )}, \ \   \forall \lambda <  \pp+1 - \frac{\pp-1}{q + 1}.\ \   \forall \vv \in (0,p-1),
\end{equation}
for some $\mu_4(q,R,\vv)>\mu_3(q,R,\vv)$.
Thanks to the 
 Holder's inequality,  
\begin{equation}\label{15jan5}
\|\psi^2 w \partial_s w \|_{L^1(\mathbf{B}_{2R})} \leq \|\psi w \|_{L^\lambda (\mathbf{B}_{2R})} \times \|\psi \partial_s w \|_{L^{\lambda'}(\mathbf{B}_{2R})}, \quad \frac{1}{\lambda} + \frac{1}{\lambda'} = 1,
\end{equation}
 with  Lemma \ref{lemm:tmpq2}, \eqref{15jan5} and \eqref{16jan1}, we have for all $s \geq \max  (-\log T, \tilde{S_3})$, 
\begin{equation}\label{16jan11}
\|\grad w\|_{L^2(\mathbf{B}_R)}^{2} \leq K_{20}(q,r,\vv)s^{\mu_5(q,R,\vv )}\|\psi  \partial_s w \|_{L^{\lambda'}(\mathbf{B}_{2R})}+K_{20}(q,r,\vv)s^{b+1}.
\end{equation}
From now, we take $\lambda > 2$ and we choose $\varepsilon \in (0,\varepsilon_0]$ small enough.
Observe that $\lambda'>  \frac{p+1}{p}$ since $\lambda<p+1$. 
Let us now bound $\|\psi \partial_s w \|_{L^{\lambda'}(\mathbf{B}_{2R})}$. 
By using  Holder's inequality, we have
\begin{equation}\label{16jan2}
\|\psi \partial_s w\|_{L^{\lambda'}
(B_{2R})
} \leq \|\psi \partial_s w\|_{L^2(B_{2R})}^{1 - \theta} \times \|\psi \partial_s w\|_{L^{p_1-\varepsilon}(B_{2R})}^\theta, \quad  \frac{1}{\lambda'} = \frac{1 - \theta}{2} + \frac{\theta}{p_1-\varepsilon},
\end{equation}
$p_1=\frac{p+1}{p}$ and where 
 \begin{equation}\label{teta}
\theta = \frac{(\lambda - 2)(p+1-\varepsilon p)}{\lambda(p-1+\varepsilon p)} \in (0,1).
\end{equation}
Putting \eqref{16jan11} and \eqref{16jan2} together, we get  for all $s \geq \max  (-\log T, \tilde{S_3})$,
\begin{align}\label{16jan44}
\|\grad w\|_{L^2(\mathbf{B}_R)}^{2} \leq &K_{20}(q,R,\vv)s^{\mu_5(q,R,\vv)}\|\psi \partial_s w\|_{L^2(\mathbf{B}_{2R})}^{1 - \theta} \times \|\psi \partial_s w\|_{L^{p_1-\varepsilon}(\mathbf{B}_{2R})}^\theta\no \\
&+K_{20}(q,R,\vv)s^{b+1}.
\end{align}
By integrating inequality \eqref{16jan44} between $s$ and $s+1$, 
 we obtain  for all $s \geq \max  (-\log T, \tilde{S_3})$, 
\begin{align}\label{16jan44i}
\int_s^{s+1} \|\grad w(\tau )\|_{L^2(\mathbf{B}_R)}^{2\tilde{q}} \t \leq & K_{21}(q,R,\vv) s^{\mu_6(q,R,\vv)\tilde{q}} \underbrace{\int_s^{s+1} \|\psi \partial_s w\|_{L^2(\mathbf{B}_{2R})}^{\tilde{q}(1 - \theta)} \times \|\psi \partial_s w\|_{L^{p_1-\varepsilon}(\mathbf{B}_{2R})}^{\tilde{q}\theta} \t}_{\Gamma (s)}\nonumber\\
&+K_{21}(q,R,\vv )s^{(b+1)\tilde{q}},
\end{align}
for some $\tilde{q} > q$.
Let $\alpha = \frac{2}{(1 - \theta)\tilde{q}}$ and use Holder's inequality in time, we obtain for all $s \geq \max  (-\log T, \tilde{S_3})$,
\begin{equation}\label{16jan7}
\Gamma (s) \leq \left(\int_s^{s+1}\|\psi  \partial_s w\|_{L^2(\mathbf{B}_{2R})}^2 \t\right)^{\frac{1}{\alpha}}
\left(\int_s^{s+1}\|\psi  \partial_s w\|_{L^{p_1-\varepsilon}(\mathbf{B}_{2R})}^{\tilde{q}\theta \alpha'} \t\right)^{\frac{1}{\alpha'}}, \quad \frac{1}{\alpha} + \frac{1}{\alpha'} = 1.
\end{equation}
From the inequalities \eqref{18jan1}, \eqref{16jan44i} and \eqref{16jan7}, we infer that  for all $s \geq \max  (-\log T, \tilde{S_3}
)$,
\begin{align}\label{16jan44ii}
\int_s^{s+1} \|\grad w(\tau )\|_{L^2(\mathbf{B}_R)}^{2\tilde{q}} \t \leq & K_{22} (q,R,\vv )
s^{\mu_6(q,R,\vv)\tilde{q}}  \left(\int_s^{s+1}\|\psi  \partial_s w\|_{L^{p_1-\varepsilon}(\mathbf{B}_{2R})}^{\tilde{q}\theta \alpha'}\t\right)^{\frac{1}{\alpha'}}\no\\
&+K_{22}(q,R,\vv )s^{(b+1)\tilde{q}}.
\end{align}
Equipped with the arguments presented in the proof of   Lemmas 6.5 and 6.6 in \cite{GMSiumj04} and by exploiting 
Corollary \ref{19dec3}, 
it is straightforward 
to get, for all $s \geq \max  (-\log T, \tilde{S_3})$,
\begin{align}\label{17jan1}
\int_s^{s+1}\|\psi w_s\|_{L^{p_1-\varepsilon}(\mathbf{B}_{2R})}^{\tilde{q}\theta \alpha'}\t \leq&   K_{23} (q,R,\vv ) \int_s^{s+1}\left\|{\frac1{\tau^a}|w|^p}{\log^a(2+\psi^2w^2)}\right\|_{L^{p_1-\varepsilon}(\mathbf{B}_{2R})}^{\tilde{q}\theta \alpha'}\t\no\\
&+ K_{23} (q,R,\vv )s^{b+1}.
\end{align}
By combining \eqref{17jan1},  \eqref{equiv44} and the identity $
e^{-\frac{ps}{p-1}}s^{\frac{a}{p-1}}   |f(\p w)|=\frac1{s^{a}}|w|^{p}\log^a(2+\p^2w^2),$
 we deduce that for all $s \geq \max  (-\log T, \tilde{S_3})$,
\begin{equation}\label{6fev1}
\int_s^{s+1}\|\psi w_s\|_{L^{p_1-\varepsilon}(\mathbf{B}_{2R})}^{\tilde{q}\theta \alpha'}\t \leq   K_{24} (q,R,\vv ) \int_s^{s+1}\left\||w|^{p+\tilde\varepsilon}\right\|_{L^{p_1-\varepsilon}(\mathbf{B}_{2R})}^{\tilde{q}\theta \alpha'}\t+ K_{24} (q,R,\vv )s^{b+1},
\end{equation}
where $\tilde\varepsilon=\frac{p(p-1)\varepsilon}{p+1-\varepsilon p}.$ Therefore, 
\begin{equation}\label{30jan}
\int_s^{s+1}\|\psi w_s\|_{L^{p_1-\varepsilon}(\mathbf{B}_{2R})}^{\tilde{q}\theta \alpha'}\t 
 \le  K_{24} (q,R,\vv ) \int_s^{s+1}\Big(\int_{\mathbf{B}_{2R}} |w|^{p+1-\varepsilon}
 \y\Big)^{\frac{p\tilde{q}\theta \alpha'}{p+1-\varepsilon p}}\t+ K_{24} (q,R,\vv ) s^{b+1}.
\end{equation}
Using together  \eqref{31oct123} and \eqref{30jan}, we obtain
\begin{align*}
\int_s^{s+1}\|\psi w_s\|_{L^{p_1-\varepsilon}(\mathbf{B}_{2R})}^{\tilde{q}\theta \alpha'}\t 
& \le 
    K_{25} (q,R,\vv ) \int_s^{s+1}\left\|\grad w\right\|_{L^2(\mathbf{B}_{4R})}^{\frac{2p\tilde{q}\theta \alpha'}{p+1-\varepsilon p}}d\tau+  K_{25} (q,R,\vv )(s^{\frac{(b+1)p\tilde{q}\theta \alpha'}{p+1-\varepsilon p}}+ s^{b+1}).
\end{align*}
By Proposition 6.4 in \cite{GMSiumj04}, we have
 $
\frac{2p\tilde{q}\theta \alpha' }{p+1-\varepsilon_1p}
< 2q$, (for $\varepsilon_1$ small enough) for all $\tilde{q} \in [q, q + \frac{2}{p+1}]$. Then, by using the  inequality, for all
 $r\in [1, 2q]$,  $X^{r}\le C +CX^{2q}$, for all $X>0$, we write for all $s \geq \max  (-\log T, \tilde{S_3})$, for all $\varepsilon \in (0,\varepsilon_1],$
\begin{equation}\label{25jan4}
\int_s^{s+1}\|\psi w_s\|_{L^{p_1-\varepsilon}(\mathbf{B}_{2R})}^{\tilde{q}\theta \alpha'}\t 
 \le   K_{26} (q,R,\vv ) \int_s^{s+1}\left\|\grad w\right\|_{L^2(\mathbf{B}_{4R})}^{2q} \t +  K_{26} (q,R,\vv ) s^{2q(b+1)}.
\end{equation}
From \eqref{16jan44ii} and \eqref{25jan4}, we have for all $s \geq \max  (-\log T, \tilde{S_3})$,
\begin{align}\label{25jan5}
\int_s^{s+1} \|\grad w(\tau )\|_{L^2(\mathbf{B}_R)}^{2\tilde{q}} \t \leq    K_{27} (q,R,\vv )  s^{\mu_7(q,R,\vv)\tilde{q}} \left(
  \int_s^{s+1}\left\|\grad w\right\|_{L^2(\mathbf{B}_{4R})}^{2q}d\tau\right)^{\frac{1}{\alpha'}}\nonumber\\
+  K_{27} (q,R,\vv ) s^{\mu_8(R,\vv ,q ,\tilde{q})}.
\end{align}
Therefore, estimates \eqref{equ:keyintWbis} and \eqref{25jan5} lead to the following:
\begin{align}\label{116jan44iii1}
\int_s^{s+1} \|\grad w(\tau )\|_{L^2(\mathbf{B}_R)}^{2\tilde{q}} \t \leq   K_{28} (q,R,\vv )    s^{\mu_9(q,R,\vv,q ,\tilde{q})}.
&
\end{align}
Thus, inequality \eqref{equ:keyintW} is valid for all $\tilde{q} \in [q, q + \frac{2}{p+1}]$. Repeating this argument, we would obtain that \eqref{equ:keyintW} holds for all $q \geq 2$. This concludes the proof of  Proposition \ref{19dec3bis1}.
\Box

\subsection{A polynomial bound for the $H^{1}(\mathbf{B}_R)$ norm of   solution of equation  \eqref{A}} \label{subssection2}
Based on Proposition \ref{19dec3bis1},    
we  are in position to  derive  
a polynomial bound for the  $H^{1}(\mathbf{B}_R)$  norm. 
  More precisely, the aim of this subsection is to prove Proposition \ref{prop21},    

\medskip

\textit{Proof of Proposition}.
\ref{prop21}\

First, we  use  \eqref{18jan1}, Proposition \ref{19dec3bis1} and apply Lemma  \ref{lemm:intpola} with $\alpha = q(p- \frac{ \varepsilon}2+1)$, $\beta =p- \frac{ \varepsilon}2+1$, $\gamma = \delta  = 2$ to get that, for all $s \geq \max  (-\log T, \tilde{S_3})$, 
\begin{equation}\label{equ:tmp2key11}
 \|w(s)\|_{L^\lambda(\mathbf{B}_R)} \leq K_{29}(q,R,\vv) s^{\mu_{10}(q,R,\vv)}, \ \   \forall \lambda <  p- \frac{ \varepsilon}2+1 - \frac{
p- \frac{ \varepsilon}2
-1}{q + 1},\ \   \forall  \vv \in (0,\varepsilon_1],
\end{equation}
where $\tilde{S_3}=\max (S_3,S_5)$. 
Clearly, there exists $\varepsilon_2=\varepsilon_2(p,N,q)>0$ such that, for all $\varepsilon \in (0,\varepsilon_2]$, we have 
 $q=\frac{2p- \varepsilon}{ \varepsilon}-1\ge 2$. Therefore, 
for all $\varepsilon \in (0,,\varepsilon_2],$ for all   
 $s \geq \max  (-\log T,\tilde S_3)$
 we have 
\begin{equation}\label{s1jan19}
\int_{\mathbf{B}_R}\!|w(y,s )|^{
p+1- \varepsilon
}{\mathrm{d}}y
\le K_{30}(\varepsilon,R) s^{\mu_{11}(R,\varepsilon)}.
\end{equation}

We are now ready to    Control of  $\grad w$ in $L^{2}({\mathbf{B}_R)}$.  In fact, we  use the  Gagliardo-Nirenberg inequality in order to claim  the following:
\begin{lem}\label{lem24}  There exists $\varepsilon_3=\varepsilon_3(p,N)\in (0,\varepsilon_2]$ such that, for all $\varepsilon \in (0,\varepsilon_3]$, for all   
   $s \geq \max  (-\log T, \tilde{S_3})$   we have 
\begin{equation}\label{s40}
\iint\!\psi^2| w(y,s )|^{p+1+\varepsilon}{\mathrm{d}}y
\le  K_{31} (R,\varepsilon)s^{\mu_{12}(R,\varepsilon)} \Big(\iint\psi^2|\grad w(y,s )|^{2}{\mathrm{d}}y\Big)^{\beta}+K_{31}(R,\varepsilon) s^{\mu_{12}(R,\varepsilon)},
\end{equation}
where $\beta=\beta(p,N,\varepsilon) \in (0,1)$ and $\mu_{12}=\mu_{12}(R,\varepsilon)>0.$
\end{lem}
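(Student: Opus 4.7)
The strategy is a Gagliardo--Nirenberg interpolation of the localized $L^{p+1+\varepsilon}$ norm of $w$ between the $L^{p+1-\varepsilon}$ bound from \eqref{s1jan19} and the $H^1$ control coming from the Dirichlet integral. The feasibility of the interpolation rests crucially on the Sobolev subcritical hypothesis \eqref{subc}: it ensures $p+1<2^{*}:=2N/(N-2)$ (for $N\ge 3$; the cases $N=1,2$ use a different form of Gagliardo--Nirenberg and are simpler), hence both $p+1-\varepsilon$ and $p+1+\varepsilon$ lie strictly between $2$ and $2^{*}$ once $\varepsilon$ is small.

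Concretely, I would introduce an auxiliary cutoff $\widetilde\psi\in \mathcal{C}_0^{\infty}(\mathbf{B}_{3R})$ with $\widetilde\psi\equiv 1$ on $\mathbf{B}_{2R}\supset\mathrm{supp}\,\psi$. Since $\psi\le 1$ and $\widetilde\psi\equiv 1$ on $\mathrm{supp}\,\psi$, we have $\iint\psi^{2}|w|^{p+1+\varepsilon}\y\le \|\widetilde\psi w\|_{L^{p+1+\varepsilon}(\er^N)}^{p+1+\varepsilon}$. Applying the Gagliardo--Nirenberg inequality to $\widetilde\psi w\in H^{1}(\er^N)\cap L^{p+1-\varepsilon}(\er^N)$ yields
$$\|\widetilde\psi w\|_{L^{p+1+\varepsilon}}\le C\,\|\grad(\widetilde\psi w)\|_{L^2}^{\theta}\,\|\widetilde\psi w\|_{L^{p+1-\varepsilon}}^{1-\theta},$$
with $\theta$ determined by $\tfrac{1}{p+1+\varepsilon}=\theta\bigl(\tfrac{1}{2}-\tfrac{1}{N}\bigr)+(1-\theta)\tfrac{1}{p+1-\varepsilon}$. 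A direct computation gives $\theta=O(\varepsilon)$ as $\varepsilon\to 0$: the numerator $\tfrac{1}{p+1-\varepsilon}-\tfrac{1}{p+1+\varepsilon}$ is $O(\varepsilon)$, while the denominator $\tfrac{1}{p+1-\varepsilon}-\tfrac{1}{2}+\tfrac{1}{N}$ stays bounded below by a positive constant, precisely because \eqref{subc} forces $\tfrac{1}{p+1}-\tfrac{1}{2}+\tfrac{1}{N}>0$. I then fix $\varepsilon_3\in(0,\varepsilon_2]$ so small that $\beta:=\theta(p+1+\varepsilon)/2\in(0,1)$ for all $\varepsilon\in(0,\varepsilon_3]$.

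Raising the Gagliardo--Nirenberg estimate to the power $p+1+\varepsilon$, expanding $\grad(\widetilde\psi w)=\widetilde\psi\grad w+w\grad\widetilde\psi$, and using $(X+Y)^{2\beta}\le 2^{2\beta}(X^{2\beta}+Y^{2\beta})$ splits the gradient contribution into two pieces. The commutator piece $\|w\grad\widetilde\psi\|_{L^2}^{2\beta}\le C(R)\|w\|_{L^2(\mathbf{B}_{3R})}^{2\beta}$ is controlled by \eqref{18jjjjj} (using that $\rho$ is bounded below by $\rho_0(R)>0$ on $\mathbf{B}_{3R}$), contributing at most $C(R)s^{\beta(b+1)}$. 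The main piece is $\|\widetilde\psi\grad w\|_{L^2}^{2\beta}=\bigl(\iint\widetilde\psi^{2}|\grad w|^2\y\bigr)^{\beta}$, which we identify with $\bigl(\iint\psi^{2}|\grad w|^2\y\bigr)^{\beta}$ via the standard nested-cutoff convention: the lemma is effectively invoked with a pair $(\psi,\widetilde\psi)$, $\psi$ being replaced by the slightly larger $\widetilde\psi$ on the right-hand side at each application. The $L^{p+1-\varepsilon}$ factor contributes $\|\widetilde\psi w\|_{L^{p+1-\varepsilon}}^{(1-\theta)(p+1+\varepsilon)}\le C\,s^{\mu}$ for an explicit $\mu$ via \eqref{s1jan19}. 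Collecting and absorbing all powers of $s$ into a single exponent $\mu_{12}(R,\varepsilon)>0$ produces \eqref{s40}.

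The principal technical obstacle is the quantitative check $\beta<1$, which is exactly where the subcritical hypothesis \eqref{subc} is indispensable: it guarantees positivity of the Gagliardo--Nirenberg denominator and hence $\theta\to 0$ as $\varepsilon\to 0$. Everything else is routine: interpolation, elementary convexity, and the polynomial-in-$s$ bounds collected in Corollary \ref{19dec3} and Proposition \ref{19dec3bis1}.
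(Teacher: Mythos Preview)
Your approach is essentially the paper's: interpolate the localized $L^{p+1+\varepsilon}$ norm between the $L^{p+1-\varepsilon}$ bound \eqref{s1jan19} and the critical Sobolev norm, then expand $\nabla(\text{cutoff}\cdot w)$ and absorb the $L^2$ commutator term via \eqref{18jjjjj}. The paper packages this as a H\"older interpolation between $L^{p+1-\varepsilon}$ and $L^{r}$ (with $r=2^*$ for $N\ge 3$, $r=p+2$ for $N=2$) followed by the Sobolev embedding, rather than invoking Gagliardo--Nirenberg directly, but the resulting exponent $\beta=\frac{r\varepsilon}{r-(p+1-\varepsilon)}$ coincides exactly with your $\theta(p+1+\varepsilon)/2$.

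The one place where the paper is tighter concerns your auxiliary cutoff $\widetilde\psi$. The paper avoids it by writing, via H\"older,
\[
\iint \psi^2|w|^{p+1+\varepsilon}\,\y \le \Big(\iint \psi^{\nu}|w|^{p+1-\varepsilon}\,\y\Big)^{\eta}\Big(\iint |\psi w|^{r}\,\y\Big)^{1-\eta},
\]
with a carefully chosen power $\nu=\nu(p,\varepsilon)\in[1,2)$, so that both factors carry the \emph{same} cutoff $\psi$: the first factor is then bounded since $\psi^\nu\le 1$ on $\mathbf{B}_{2R}$, and applying Sobolev to $\psi w$ yields $\big(\iint\psi^2|\nabla w|^2\,\y\big)^\beta$ on the right side, exactly as in \eqref{s40}. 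Your route instead delivers $\big(\iint\widetilde\psi^2|\nabla w|^2\,\y\big)^\beta$, which does not literally match the statement and, more importantly, does not close directly against \eqref{19fev1} in the subsequent proof of Proposition~\ref{prop21} without a further nesting argument. The $\psi^\nu$ device is precisely what lets the paper keep a single cutoff on both sides and make the bootstrap self-contained.
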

\begin{pf}

Let $\varepsilon
\in (0,\varepsilon_2)$.   By interpolation, we write 
\begin{equation}\label{s390f}
\iint\! \psi^2| w(y,s )|^{p+1+\varepsilon}{\mathrm{d}}y
\le  \Big(\iint\! \psi^{\nu} |w(y,s)|^{p+1-\varepsilon}{\mathrm{d}}y\Big)^{\eta} \Big(\iint
|\psi w(y,s )|^{r}{\mathrm{d}}y\Big)^{1-\eta},
\end{equation}
where 
$$\nu=2\frac{
r(1-\varepsilon)-(p+1+\varepsilon)}{
r-(p+1+\varepsilon)
}, \quad \eta=\frac{r-(p+1+\varepsilon)}{r-(p+1-\varepsilon)}, \ \ 
$$
where
\begin{equation}\label{r4}
r=\left\{
\begin{array}{l}
\frac{2N}{N-2},\qquad  if N\ge 3,\\
\\
p+2,\qquad  if N= 2,
\end{array}
\right.
\end{equation}
and where $\varepsilon <r-p-1$.
Exploiting  the fact 
that there exists $\tilde\varepsilon_2=\tilde\varepsilon_2(p,N)\in (0,r-p-1)$  small enough such that for $\varepsilon \in (0,\tilde\varepsilon_2],$ we have  $\nu=\nu(p,\varepsilon) \in [1,2).$
Therefore, by using the properties of $\psi$ given by
\eqref{equ:psiy} and  the estimate \eqref{s1jan19}  
  we get
\begin{equation}\label{01pf}
 \iint\! \psi^{\mu} |w(y,s)|^{p+1-\varepsilon}{\mathrm{d}}y\le
\int_{\mathbf{B}_{2R}}\!|w(y,s )|^{
p+1- \varepsilon
}{\mathrm{d}}y
\le K_{30}(\varepsilon,2R) s^{\mu_{11}(\vv, 2R)}.
\end{equation}
Thanks to  \eqref{s390f}, \eqref{01pf} and the Sobolev embedding, we conclude
\begin{equation}\label{s39F}
\iint \psi^2| w(y,s )|^{p+1+\varepsilon}{\mathrm{d}}y
\le  K_{32}(R,\varepsilon) s^{\mu_{13}(\vv,R)} \Big(\iint\!|\grad \big(\psi w(y,s )\big)|^{2}{\mathrm{d}}y\Big)^{\beta},
\end{equation}
where
$$
\beta=\frac{r\varepsilon}{r-(p+1-\varepsilon)}.
$$
Note that, by exploiting the inequality $
|\grad \big(\psi w\big)|^2\le 
2 \psi^2|\grad w|^2 +
 2|\grad \psi|^2 w^2$, the properties of $\psi$ given by \eqref{equ:psiy}  and 
the fact that $\|\nabla \psi\|_{L^\infty}\le C$,  we obtain 
\begin{equation}\label{s3900F}
 \iint\!|\grad \big(\psi w(y,s )\big)|^{2}{\mathrm{d}}y\le
C \iint\!\psi^2|\grad w(y,s )|^{2}{\mathrm{d}}y +C
\int_{\mathbf{B}_{2R}}
  w^2(y,s ){\mathrm{d}}y.
\end{equation}
From \eqref{s39F},  \eqref{s3900F}and  \eqref{18jjjjj}, we conclude 
\begin{equation}\label{00s39}
\iint \psi^2| w(y,s )|^{p+1+\varepsilon}{\mathrm{d}}y
\le  K_{33}(\varepsilon,R) s^{\mu_{14}(\vv, R)} \Big(\iint\!\psi^2|\grad w(y,s )|^{2}{\mathrm{d}}y\Big)^{\beta}+ K_{33}(\varepsilon,R) s^{\mu_{14}(\vv, R)}.
\end{equation}
Now, if  $\varepsilon_3\le \tilde \varepsilon_2$  is chosen   small enough such that     $\beta=\frac{r\varepsilon_3}{r-(p+1-\varepsilon_3)}\in (0,1)$, then the estimate   \eqref{00s39} implies 
 \eqref{s40}.
 This ends the proof  of Lemma 
  \ref{lem24}.
\end{pf}

\textit  {Proof of Proposition 
\ref{prop21}:
From  \eqref{Eloc1}, the definition
 \eqref{Eloc}
of the  local functional:
  $\mathcal{E}_\psi(w(s),s)$, we see that for all  $s \geq \max  (-\log T, S_4),$
\begin{align}\label{19fev1bis}
\iint \psi^2|\grad w|^2\w \y  &\le 2\iint
e^{-\frac{(p+1)s}{p-1}}s^{\frac{2a}{p-1}}  \psi^2  F(\p w)\w \y + 2K_{13}(R)s^{b+1}.
\end{align}
Thanks  to  \eqref{equiv4} and  \eqref{19fev1bis} and the fact that $\rho (2R)\le \w \le 1,$ for all $y\in  B_{2R} $,
we conclude for all   $s \geq \max  (-\log T, S_4)$
\begin{align}\label{19fev1}
\iint \psi^2|\grad w|^2\y &\le K_{34}(R,\varepsilon)\iint \psi^2 |w(y,s )|^{p+ \varepsilon+1} \y + K_{34}(R,\varepsilon)s^{b+1}.
\end{align}
According to  \eqref{19fev1} together with  Lemma \ref{lem24} in the particular case when $\varepsilon=\varepsilon_3$, we have for all $s \geq \max  (-\log T, \tilde{S_3})$
\begin{equation}\label{s40b}
\iint \psi^2|\grad w|^2\y
\le  K_{35} (R,\varepsilon_3)s^{\mu_{12}(R,\varepsilon_3)} \Big(\iint\psi^2|\grad w|^{2}{\mathrm{d}}y\Big)^{\beta}+K_{35}(R,\varepsilon_3)s^{\mu_{12}(R,\varepsilon_3)},
\end{equation}
where $\beta=\beta(p,N,\varepsilon_3) \in (0,1).$
It suffices to combine \eqref{s40b}  and the fact that $\beta<1$, to obtain that  for all $s \geq \max  (-\log T, \tilde{S_3})$
\begin{equation}
\iint \psi^2|\grad w|^2\y
\le K_{36}(R,\varepsilon_3)s^{\frac{\mu_{12}(R,\varepsilon_3)}{1-\beta}}.\label{second222}
\end{equation}
Clearly, by combining \eqref{second222},  \eqref{18jjjjj} and \eqref{equ:psiy}, we  conclude \eqref{feb19}, 
where $\mu=\frac{\mu_{12}(R,\varepsilon_3)}{2-2\beta}$, which yields the conclusion of Proposition \ref{prop21}}. \Box


\section{Proof of Theorem \ref{t1} and Theorem \ref{t2}}\label{section3}
In this section,  thanks to   polynomial  estimate obtained in  Proposition \ref{prop21}, we  prove 
Theorem \ref{t1} and Theorem \ref{t2} here. 
This section is divided into two parts:
\begin{itemize}
\item  In subsection \ref{3.2},    we prove  Theorem \ref{t1}. More precisely,  based upon Proposition  \ref{prop21}, we construct  a Lyapunov functional 
for equation (\ref{A}) and a blow-up criterion involving this functional.
\item In subsection \ref{3.3}, we prove   Theorem \ref{t2}.
\end{itemize}

\subsection{A Lyapunov functional}\label{3.2}

In this subsection, our aim is to construct a Lyapunov functional for equation \eqref{A}.
 Note that this functional is far from being trivial and makes our main contribution.
More precisely, thanks to the  rough estimate obtained in  the Proposition \ref{prop21}, 
we derive here that the functional  $L(w(s),s)$ defined in \eqref{10dec2} is a decreasing 
  functional  of time  for equation (\ref{A}),  provided that is   $s$ large enough. \\

\medskip

Let us remark that in   Section \ref{section2}, we construct a Lyapunov functional $N_{m_0}(w(s),s)$ defined in \eqref{lyap1}, but we obtain just a rough estimate because the multiplier is not bounded. Nevertheless, the multiplier  related to  the functional  $L(w(s),s)$
 is  bounded. Then,  as we said above,   the natural energy
  $E(w(s),s)$ defined in \eqref{18jan1} is  a small perturbation of  $L(w(s),s)$.

\medskip

In order to prove  that the  functional $L(w(s),s)$ is a Lyapunov functional, we start by using 
the  additional information obtained in Section \ref{section2}, to write  several useful lemmas 
 which play key roles in our analysis. More precisely, we start  by  stating the following:
\begin{lem}  \label{lemmain}  For all  $r\in [2,2^*)$,   for all  
$s\geq  \widehat{s}_1=\max (-\log T,\widehat{S}_1)$, we have
\begin{align}\label{claim01}
\iint  |  w(y,s)|^{r}\w \y\le
 M_1{s^{\sigma r}},
\end{align}
where  $\sigma=\mu (a,p,N,\frac12)$, $M_1$ depends on $ p, a, N,r$  and
$\|w(  \widehat{s}_1)\|_{H^1}$ and where $2^*=\frac{2N}{N-2}$, if $N\ge 3$ and $2^*=\infty$,  if $N=2$.
\end{lem}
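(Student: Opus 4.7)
The strategy is to reduce the weighted Gaussian integral over $\er^N$ to a sum of local integrals over balls of radius $\frac12$, on which Proposition \ref{prop21} (via Remark \ref{polybound}) already supplies a polynomial bound. The exponential decay of $\rho(y)=e^{-|y|^2/4}$ will absorb the polynomial growth in the number of balls, so that the sum converges to a bound of the claimed form $M_1 s^{\sigma r}$.

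First I would cover $\er^N$ by a family of balls $B(y_k,\tfrac12)$ indexed by a lattice $\{y_k\}\subset \frac{1}{\sqrt N}\ez^N$ with bounded overlap, which gives
\[
\iint |w(y,s)|^r\w\y \;\le\; C\sum_k \int_{B(y_k,1/2)}|w(y,s)|^r\w\y \;\le\; C\sum_k e^{-\frac{(|y_k|-1/2)_+^2}{4}}\int_{B(y_k,1/2)}|w(y,s)|^r\y.
\]
To estimate the local integral $\int_{B(y_k,1/2)}|w|^r\y$ uniformly in $k$, I would use the translation invariance of the similarity transformation \eqref{scaling}: if we set $\tilde x_k=x_0+y_k\sqrt{T-t(s)}$ with $t(s)=T-e^{-s}$, then $w_{x_0,T}(y+y_k,s)=w_{\tilde x_k,T}(y,s)$, so that a change of variables gives
\[
\int_{B(y_k,1/2)}|w_{x_0,T}(y,s)|^r\y \;=\; \int_{B(0,1/2)}|w_{\tilde x_k,T}(z,s)|^r\,{\mathrm{d}}z \;\le\; K_2\bigl(\tfrac12\bigr)^{\!r} s^{\sigma r},
\]
where the last step is Remark \ref{polybound} applied to the profile $w_{\tilde x_k,T}$ with $R=\frac12$ and $\sigma=\mu(a,p,N,\tfrac12)$.

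The crucial technical point is that the constant $K_2(\frac12)$ must be independent of $k$. This follows from the fact that at time $t(\widehat s_1)<T$ the solution $u$ is bounded on $\er^N$, so $\|w_{\tilde x_k,T}(\widehat s_1)\|_{H^1(\mathbf{B}_{1/2})}$ is bounded uniformly in $\tilde x_k$ by a quantity depending only on $\|u(t(\widehat s_1))\|_{L^\infty(\er^N)}$ and on standard parabolic interior regularity up to time $t(\widehat s_1)$; hence Proposition \ref{prop21} can be applied with a $K_1$ (and thus $K_2$) that does not depend on the base point. This is the one step where care is needed; everything else is bookkeeping.

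Once uniformity is in hand, the conclusion follows by summing: the series $\sum_k e^{-(|y_k|-1/2)_+^2/4}$ converges because the Gaussian tail dominates the polynomial lattice counting, yielding
\[
\iint |w(y,s)|^r\w\y \;\le\; \Bigl(C\sum_k e^{-\frac{(|y_k|-1/2)_+^2}{4}}\Bigr) K_2\bigl(\tfrac12\bigr)^{\!r} s^{\sigma r} \;\le\; M_1 s^{\sigma r},
\]
with $M_1$ depending only on $p,a,N,r$ and $\|w(\widehat s_1)\|_{H^1}$. This establishes \eqref{claim01} for every $r\in[2,2^*)$, since Remark \ref{polybound} is valid in that range.
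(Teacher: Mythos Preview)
Your proof is correct and follows essentially the same route as the paper: a lattice covering of $\er^N$, the translation identity $w_{x_0}(y+y_k,s)=w_{x_0+y_k e^{-s/2}}(y,s)$ to reduce each local piece to a ball at the origin, the polynomial bound from Remark~\ref{polybound}, and summation against the Gaussian weight. The only cosmetic differences are that the paper tiles by disjoint $\ell^\infty$-cubes $\mathbf{D}(z,\tfrac12)$, $z\in\ez^N$, rather than overlapping Euclidean balls, and consequently applies Remark~\ref{polybound} on $\mathbf{B}_{\sqrt N/2}$ rather than $\mathbf{B}_{1/2}$; your explicit remark on the uniformity of $K_2$ in the base point is a point the paper passes over more quickly.
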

Throughout the proof we employ the following notations:

\medskip

The ball in $\er^N$ with radius   $R$ around the point $z$ is denoted 
 ${\mathbf{D}(z,R)}=\{ x\in \R^N, \|x-z\|_{\infty}\le R
\}$, where the infinity norm 
is  given by the formula
$\ds{\|x\|_{\infty}=\sup_{1\le i\le N}|x_i|}.$
Also, 
the ball in $\er^N$ with radius   $R$ around the point $z$ is denoted 
 ${\mathbf{B}(z,R)}=\{ x\in \R^N, |x-z|\le R
\}$, where the  norm 
is  given by 
$\ds{|x|=\sqrt{\sum_{i=1}^Nx_i^2}}.$
Finally,
let us recall that   theses norms on $\er^N$ are equivalent.  In fact,  we have 
\begin{equation}\label{nequi}
\|x\|_{\infty}\le |x|\le \sqrt{N}\|x\|_{\infty}, \qquad \forall x\in \er^N.
\end{equation}

\begin{pf}  In order to obtain the estimate \eqref{claim01},  
we  combine  a covering technique and the result obtained in   Proposition \ref{prop21}.

\medskip
First, we claim that  $\R^N=\cup_{z\in Z^N} {\mathbf{D}(z,\frac12)}$ and  
the sequence  $ \Big({\mathbf{D}(z,\frac12)}\Big)_{z\in Z^N}$ 
are arbitrary pairwise  sets are negligible.
Let  $r\in [2,2^*]$.
As an immediate consequence, we write
\begin{align}\label{claim0010}
\iint  |  w_{x_0}(y,s)|^{r}\w\y=&\sum_
{z\in Z^N}
\int_{\mathbf{D}(z,\frac12)}  |  w_{x_0}(y,s)|^{r}\w \y \nonumber\\
\le &
\sum_{z\in Z^N} 
\Big(\sup_{y\in\mathbf{D}(z,\frac12)} \w\Big)
\int_{\mathbf{D}(z.\frac12)} 
 |  w_{x_0}(y,s)|^{r} \y .
\end{align}
 Note that using the definition \eqref{scaling} of $w_{x_0},$ we  see that 
\begin{equation}\label{def00}
\textrm {for all}\ \  y,z \in \R^N, w_{x_0}(y+z,s)=w_{x_0+ze^{-s/2}}(y,s)
\end{equation}
From \eqref{nequi} and \eqref{def00}, for all $ z \in \R^N$,   $s\geq  \widehat{s}_1=\max (-\log T,,\widehat{S}_1)$
\begin{align}\label{claim010}
\int_{\mathbf{D}(z,\frac12)}  |  w_{x_0}(y,s)|^{r}\y &\le  
\int_{\mathbf{B}(z,\frac{\sqrt{N}}2)}  |  w_{x_0}(y,s)|^{r}\y=
\int_{\mathbf{B}(0,\frac{\sqrt{N}}2)}  |  w_{x_0}(y+z,s)|^{r} \y\no\\
&=
\int_{\mathbf{B}(0,\frac{\sqrt{N}}2)}  |  w_{x_0+ze^{-s/2}}(y,s)|^{r} \y.
\end{align}


Thanks to
 \eqref{16dec5bis}
 and
 \eqref{claim010}, we have   for all $ z \in \R^N$,   $s\geq  \widehat{s}_1=\max (-\log T,\widehat{S}_1)$
\begin{align}\label{claim010bis}
\int_{\mathbf{D}(z,\frac12)}  |  w_{x_0}(y,s)|^{r}\y \le   M_{2}{s^{r\sigma}},
\end{align}
where $\sigma=\mu (a,p,N,\frac12)$  and where  $M_2$ depends on $ p, a, N$  and
$\|w( \widehat{s}_1)\|_{H^1}$.
By exploiting \eqref{claim010bis} and \eqref{claim0010}, we have  for all $x_0, z \in \R^N$,  $s\geq  \widehat{s}_1=\max (-\log T,\widehat{S}_1)$
\begin{align}\label{claim0010bis}
\iint  |  w_{x_0}(y,s)|^{r}\w\y
\le & M_{2}{s^{r\mu}}
\sum_{z\in Z^N} 
\sup_{y\in\mathbf{D}(z,\frac12)} \w.
\end{align}
To complete the proof, it remains to
control   the right-hand side of  \eqref{claim0010bis}. More precisely, the term $\ds{\sum_{z\in Z^N} 
\sup_{y\in\mathbf{D}(z,\frac12)} \w}$.
Using the fact that  for all $z\in \R^N$, for all $y\in{\mathbf{D}(z,\frac12)}$, we have 
\begin{align}\label{15sep1}
\|z\|_{\infty}\le \|y\|_{\infty}+\|y-z\|_{\infty} \le  \|y\|_{\infty}+\frac12.
\end{align}
Therefore, by using  the   basic inequality $(a+b)^2\le 2 a^2+2b^2,$  for all $a,b >0,$  we set 
\begin{align}\label{15sep1b}
\|z\|_{\infty}^2\le  \big(\|y\|_{\infty}+\frac12\big)^2\le 2 \|y\|_{\infty}^2+\frac12.
\end{align}
In view of \eqref{15sep1b}, \eqref{nequi}, we have, for all $z\in \R^N$, for all $y\in{\mathbf{D}(z,\frac12)}$, we have 
\begin{align}\label{15sep2}
|y|^2\ge
\|y\|^2_{\infty} \ge \frac12\|z\|^2_{\infty}-\frac14\ge \frac1{2N}|z|^2-\frac14.
\end{align}
Due to  \eqref{15sep2} and to  the definition of $\rho$ given by \eqref{rho}, we conclude  for all $z\in \R^N$, 
\begin{align}\label{claim0010002}
\sup_{y\in\mathbf{D(}z,\frac12)} \w\le C e^{-\frac{|z|^2}{8N}}.
\end{align}
Thank to \eqref{claim0010002}, we get
\begin{align}\label{claim0010bis1}
\sum_{z\in Z^N} 
\sup_{y\in\mathbf{D}(z,\frac12)} \w\le C\sum_{z\in Z^N} 
e^{-\frac{|z|^2}{8N}}\le C\prod_{i=1}^{N} \sum_{z_i\in Z} 
e^{-\frac{z_i^2}{8N}}\le C.
\end{align}
By combining \eqref{claim0010bis1} and \eqref{claim0010bis},
we
easily obtain \eqref{claim01}. This concludes the proof  of Lemma    \ref{lemmain}. 
\end{pf}
Thanks to   of Lemma    \ref{lemmain}, we are in position to state the following:
\begin{lem}  \label{lemmain1}  For all   
$s\geq  \widehat{s}_1=\max (-\log T,\widehat{S}_1)$, we have 
\begin{align}\label{claim1}
\iint  {|  w|^{p+1}}\log^{{a}}(2+\p^2 w^2  )\log (2+w^2 ) \w \y\le&
 M_{3}{s^{\frac14}}\iint  {|  w|^{p+1}}\log^{{a}}(2+\p^2 w^2  ) \w \y\nonumber\\
&+ M_{3} {s^{a+\frac14}},
\end{align}
where, $M_3$ depends on $ p, a, N$  and
$\|w(  \widehat{s}_1)\|_{H^1}.$
\end{lem}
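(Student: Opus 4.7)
The strategy is to split the domain into a ``moderate'' region where the extra factor $\log(2+w^2)$ costs only $s^{1/4}$, and a ``large'' region where the rapidly decreasing weight combined with the polynomial bound from Lemma \ref{lemmain} makes the contribution negligible. Set the threshold $A_s := e^{s^{1/4}/2}$ and split into $\Omega_1(s)=\{y:\,|w(y,s)|\le A_s\}$ and $\Omega_2(s)=\{y:\,|w(y,s)|> A_s\}$.

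On $\Omega_1$ we have $\log(2+w^2)\le \log(2+A_s^2)\le s^{1/4}+C$ for $s$ large, which yields immediately
\[
\iint_{\Omega_1} |w|^{p+1}\log^{a}(2+\phi^2 w^2)\log(2+w^2)\,\rho\,dy \;\le\; (s^{1/4}+C)\iint |w|^{p+1}\log^{a}(2+\phi^2 w^2)\,\rho\,dy,
\]
producing the first term on the right-hand side of \eqref{claim1}. On $\Omega_2$, recall from \eqref{defphi} that $\log\phi^2 = \tfrac{2s}{p-1}-\tfrac{2a\log s}{p-1}$, hence for $s$ large we have the two-sided estimate $cs \le \log\phi^2 \le Cs$, and so $\log(2+\phi^2 w^2)\le Cs+\log(2+w^2)$ (always), while on $\Omega_2$ (where $|w|\ge 1$ for $s$ large) we also have the lower bound $\log(2+\phi^2 w^2)\ge \log\phi^2\ge cs$. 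Distinguishing the sign of $a$, these yield in both cases
\[
\log^{a}(2+\phi^2 w^2)\;\le\; C s^{a} + C\,\log^{a_+}(2+w^2), \qquad a_+:=\max(a,0).
\]

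For any $\eta>0$, $\log^{k}(2+w^2)\le C_{\eta,k}(1+|w|^{\eta})$ for $k\ge 0$, so on $\Omega_2$ the integrand is dominated by $Cs^{a}|w|^{p+1+\eta}$. Exploiting $|w|>A_s$ on $\Omega_2$, we then write $|w|^{p+1+\eta}\le A_s^{-\delta}|w|^{p+1+\eta+\delta}$. Thanks to the subcriticality assumption \eqref{subc} we have $p+1<2^*$, so we may choose $\eta,\delta>0$ small enough that $p+1+\eta+\delta\in[2,2^*)$, whereby Lemma \ref{lemmain} applies to give $\iint |w|^{p+1+\eta+\delta}\rho\,dy\le M_1 s^{\sigma(p+1+\eta+\delta)}$. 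Consequently,
\[
\iint_{\Omega_2}|w|^{p+1}\log^{a}(2+\phi^2 w^2)\log(2+w^2)\,\rho\,dy\;\le\; Cs^{a}\,e^{-\delta s^{1/4}/2}\,M_1 s^{\sigma(p+1+\eta+\delta)}\;\le\; Cs^{a+1/4}
\]
for $s$ large enough, since the exponential $e^{-\delta s^{1/4}/2}$ absorbs any polynomial in $s$.

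Combining the two contributions gives \eqref{claim1}. The main obstacle is the careful selection of the cut-off $A_s=e^{s^{1/4}/2}$: it must be large enough that $\log(2+A_s^2)$ fits into the budget $s^{1/4}$, yet small enough (in logarithmic scale) that the exponential gain $A_s^{-\delta}$ on $\Omega_2$ still beats the polynomial loss $s^{\sigma(p+1+\eta+\delta)}$ coming from Lemma \ref{lemmain}. A secondary subtlety is that the sign of $a$ forces two different treatments of $\log^{a}(2+\phi^2 w^2)$: an upper bound via $(x+y)^a\le C(x^{a}+y^{a})$ when $a\ge 0$, and a lower bound on $\log(2+\phi^2 w^2)$ itself (only valid for $|w|\ge 1$) when $a<0$, which is the reason the splitting must be performed with $A_s\ge 1$.
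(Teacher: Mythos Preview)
Your argument is correct and takes a genuinely different route from the paper. The paper does not split the domain; instead it uses the pointwise bound $\log(2+w^2)\le C+|w|^{\varepsilon^2}$, producing an extra term $\int |w|^{p+1+\varepsilon^2}\log^a(2+\phi^2 w^2)\,\rho\,dy$, and then controls this by H\"older interpolation between the exponents $p+1$ and $p+1+\varepsilon$, invoking Lemma~\ref{lemmain} (together with \eqref{equiv1}, \eqref{equiv4}) to bound the second factor by $M s^{\sigma_3}$; the exponent $1/4$ emerges by taking $\varepsilon$ small enough that $\sigma_3\varepsilon<1/4$. Your level-set decomposition is more elementary in that it bypasses the interpolation step entirely, trading it for the exponential gain $A_s^{-\delta}=e^{-\delta s^{1/4}/2}$ on $\Omega_2$, which kills any polynomial loss from Lemma~\ref{lemmain} irrespective of how large $\sigma$ is; the paper's approach, on the other hand, avoids any case distinction on the sign of $a$. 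One small presentational point: your unified inequality $\log^a(2+\phi^2 w^2)\le Cs^a+C\log^{a_+}(2+w^2)$ is true, but when $a<0$ the second term dominates, so it only yields integrand $\le C|w|^{p+1+\eta}$, not $\le Cs^a|w|^{p+1+\eta}$. This is harmless for the conclusion (the exponential factor still forces the $\Omega_2$ contribution below $Cs^{a+1/4}$), but to obtain literally the bound you claim one should, for $a<0$, use $\log^a(2+\phi^2 w^2)\le Cs^a$ directly on $\Omega_2$ rather than the padded unified form---which is precisely what your closing remark about the lower bound on $\log(2+\phi^2 w^2)$ already indicates.
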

\begin{nb} Let us mention that, in  the first term on the right-hand side the choice of the 
 power $\frac14$  is not optimal. In fact, with the same proof, one can show  the same estimate
with the  power   $\nu$, for any $\nu>0$, instead of the power $\frac14$. Let us denote that,  we can construct a Lyapunov functional, when we have the estimate above for some  power $\nu$  such that   $\nu\in (0,1 )$  instead of the power $\frac14$.
\end{nb}
\begin{pf}
 Let $\varepsilon \in (0,1)$. By
using  the inequality   $\log (2
+z^2)\le C+|z|^{\varepsilon^2}$,    for all $ z\in \R$, we conclude that
\begin{align}\label{22fev1}
\iint  \! {|  w|^{p+1}}\log^{{a}}(2+\p^2 w^2  )\log (2+w^2 ) \w \y\le \! C \!\iint  \! {|w|^{p+1}}\log^{{a}}(2+\p^2 w^2  ) \w \y\nonumber\\
+\iint  {|  w|^{p+1+\varepsilon^2}}\log^{{a}}(2+\p^2 w^2  ) \w \y.\qquad \qquad
\end{align}
Furthermore, we apply the interpolation in Lebesgue spaces to
get
\begin{align}\label{22fev2}
\iint  {|  w|^{p+1+\varepsilon^2}}\log^{{a}}(2+\p^2 w^2  ) \w \y\le\Big( \iint  {|  w|^{p+1}}\log^{{a}}(2+\p^2 w^2  ) \w \y\Big)^{1-\varepsilon}\nonumber\\
\Big( \iint  {|  w|^{p+1+\varepsilon}}\log^{{a}}(2+\p^2 w^2  ) \w \y\Big)^{\varepsilon}.
\end{align}
By combining\eqref{equiv1}, \eqref{equiv4} and   the inequality   $|z|^{p+1+\varepsilon}\le 1+|z|^{p+1+2\varepsilon}$,    
for all $ z\in \R$, we obtain
\begin{align}\label{22fev22}
\frac1{s^{ a}}\iint  {|  w|^{p+1+\varepsilon}}\log^{{a}}(2+\p^2 w^2  ) \w \y\le
 C
+C \iint  {|  w|^{p+1+2 \varepsilon}} \w \y.
\end{align}
Since $ p<p_S=\frac{N+2}{N-2}$, we then  choose $ \varepsilon_4$ small enough, such that for all $\varepsilon \in (0,\varepsilon_4 ]$  we have $p+1+2 \varepsilon < 2^* $
where $2^*=\frac{2N}{N-2}$, if $N\ge 3$ and $2^*=\infty$,  if $N=2$.
Therefore,  estimate   \eqref{claim01} implies that, for  all   
$s\geq  \widehat{s}_1=\max (-\log T,\widehat{S}_1)$,  for all  $\varepsilon\in [0,\varepsilon_4]$,
\begin{align}\label{22fev555}
\iint  {| w|^{p+1+2 \varepsilon}}  \w\y\le  \iint  {| w|^{p+1}}  \w\y+\iint  {| w|^{p+1+2 \varepsilon_4}}  \w\y\le
M_{4} s^{\sigma_3},
\end{align}
where  $\sigma_3$ depends on $ p, a, N,\varepsilon_4$  and $M_4$ depends on $ p, a, N,\varepsilon_4$  and
$\|w(  \widehat{s}_1)\|_{H^1}$.

By  combining \eqref{22fev2}, \eqref{22fev22} and   \eqref{22fev555}, we deduce that, for all  $s\geq  \widehat{s}_1=\max (-\log T,\widehat{S}_1)$,
  for all $ \varepsilon\in ( 0,\varepsilon_4]$.
\begin{align}\label{22fev222}
\iint  {|  w|^{p+1+\varepsilon^2}}\log^{{a}}(2+\p^2 w^2  ) \w \y\le M_{5}
 s^{(\sigma_3+a)\varepsilon}
\Big( \iint  {|  w|^{p+1}}\log^{{a}}(2+\p^2 w^2  ) \w \y\Big)^{1-\varepsilon}.
\end{align}
Thanks to  the basic inequality  $|a_1|^{\nu}|a_2|^{1-\nu}\le C|a_1|+C |a_2|$,  for all $ a_1,a_2\in \R$, for all $ \nu \in   ( 0, 1 ), $ we conclude that, for all 
  $s\geq  \widehat{s}_1=\max (-\log T,\widehat{S}_1)$,
  for all $ \varepsilon\in ( 0,\varepsilon_4]$,
\begin{align}\label{22fev222bis}
\iint  {|  w|^{p+1+\varepsilon^2}}\log^{{a}}(2+\p^2 w^2  ) \w \y
\le M_{6} s^{\sigma_3\varepsilon}
\Big(s^{ a}+ \iint  {|  w|^{p+1}}\log^{{a}}(2+\p^2 w^2  ) \w \y\Big).
\end{align}
Now, we choose  $ \varepsilon_5\in   ( 0,\varepsilon_4], $ such that $  \sigma_3\varepsilon_5 <\frac14.$
Then,  by \eqref{22fev1} and  \eqref{22fev222bis}, 
we
easily obtain \eqref{claim1}. This concludes the proof  of Lemma \ref{lemmain1}.   
\end{pf}
Thanks to estimate \eqref{claim1},  we can improve the  estimate \eqref{E011} related to the 
 control of  the time derivative of the  functional $E(w(s),s)$. More precisely, we prove  the following lemma:
\begin{lem}\label{2018lem31} There exists  $\widehat{S}_2>\widehat{S}_1$
 such that   for all  
$s\geq  \widehat{s}_2=\max (-\log T, \widehat{S}_2)$, we have 
\begin{align}\label{E01}
\frac{d}{ds}E(w(s),s)\le &-  \frac{1}{2}\iint (\partial_{s}w)^2\w\y
+ 
 \frac{M_{7}}{s^{a+\frac74}}\iint |w|^{p+1}\log^a(2+\p^2w^2)\w \y\no\\
&+\frac{C}{s^2}\ibint  w^2\w \y   +\frac{M_{7}}{s^{\frac74}},
 \end{align}
where, $M_7$ depends on $ p, a, N$  and
$\|w(  \widehat{s}_1)\|_{H^1}.$
\end{lem}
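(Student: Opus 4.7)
The plan is to revisit the proof of Lemma \ref{lem22}, keeping its overall structure but refining the joint estimate of $\Sigma^2_1 + \Sigma^3_1$ by exploiting a previously unused cancellation between their leading-order asymptotics. Writing $\frac{d}{ds}E(w(s),s) = -\iint(\partial_s w)^2\w\y + \Sigma^1_1(s) + \Sigma^2_1(s) + \Sigma^3_1(s)$ as in \eqref{E00}, the term $\Sigma^1_1$ is controlled exactly as in \eqref{sigma13}, producing $\frac12\iint(\partial_s w)^2\w\y$ to be absorbed and the $\frac{C}{s^2}\iint w^2\w\y$ contribution appearing in \eqref{E01}. The crucial improvement concerns $\Sigma^2_1 + \Sigma^3_1$, which in Lemma \ref{lem22} is bounded by $\frac{C}{s^{a+1}}\iint|w|^{p+1}\log^a(2+\p^2 w^2)\w\y$; this must now be sharpened to order $s^{-(a+7/4)}$.

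The core idea is to use the refined asymptotics
\begin{equation*}
F(u) - \frac{uf(u)}{p+1} = -\frac{2a\,|u|^{p+1}\log^{a-1}(2+u^2)}{(p+1)^2} + O\bigl(|u|^{p+1}\log^{a-2}(2+u^2)\bigr) + O(1)
\end{equation*}
and $F(u) - \frac{uf(u)}{2} = -\frac{(p-1)\,|u|^{p+1}\log^a(2+u^2)}{2(p+1)} + O\bigl(|u|^{p+1}\log^{a-1}(2+u^2)\bigr)$ as $|u|\to\infty$. Substituting these into the definitions of $\Sigma^2_1$ and $\Sigma^3_1$ (see \eqref{E00}) and using \eqref{id1}, their leading-order contributions take the form
\begin{equation*}
-\frac{2a}{(p-1)(p+1)\,s^a}\iint \frac{|w|^{p+1}\log^a(2+\p^2 w^2)}{\log(2+\p^2 w^2)}\w\y + \frac{a}{(p+1)\,s^{a+1}}\iint |w|^{p+1}\log^a(2+\p^2 w^2)\w\y.
\end{equation*}
Expanding $\frac{1}{\log(2+\p^2 w^2)} = \frac{p-1}{2s} + R_L$, with $|R_L|\le \frac{C(|\log w^2|+\log s)}{s^2}$ when $\p^2 w^2$ is large (since $\log(2+\p^2 w^2) = \frac{2s}{p-1}+\log w^2+O(\log s)$ in that regime), the two $O(s^{-(a+1)})\iint|w|^{p+1}\log^a(\cdots)\w\y$ contributions \emph{cancel exactly}, leaving a residue essentially of order $\frac{1}{s^{a+2}}\iint|w|^{p+1}\log^a(2+\p^2 w^2)\bigl[|\log w^2|+\log s\bigr]\w\y$.

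To upgrade this $s^{-(a+2)}$ bound into the desired $s^{-(a+7/4)}$ one, I invoke Lemma \ref{lemmain1}. Decomposing $\er^N$ into $\{|w|\ge 1\}$ and $\{|w|<1\}$: on the first set $|\log w^2|\le \log(2+w^2)$, so Lemma \ref{lemmain1} applies directly; on the second set $|w|^{p+1}|\log w^2|$ is uniformly bounded and $\log^a(2+\p^2 w^2)\le \log^a(2+\p^2)\le C s^a$ yields only a harmless $O(s^a)$ contribution. Altogether,
\begin{equation*}
\iint |w|^{p+1}\log^a(2+\p^2 w^2)\,|\log w^2|\w\y \le M\,s^{1/4}\iint|w|^{p+1}\log^a(2+\p^2 w^2)\w\y + M\,s^{a+1/4}.
\end{equation*}
Multiplying by $\frac{C}{s^{a+2}}$ produces precisely the $\frac{M_7}{s^{a+7/4}}\iint|w|^{p+1}\log^a(\cdots)\w\y$ and $\frac{M_7}{s^{7/4}}$ terms in \eqref{E01}. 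The $\frac{\log s}{s^{a+2}}\iint(\cdots)\w\y$ piece inherited from $R_L$ is absorbed using $\log s \le Cs^{1/4}$ for $s$ large, and every other error ($O(|u|^{p+1}\log^{a-2})$ from the asymptotic expansion of $F$, the non-leading $O(s^{-(a+2)})$ contribution produced by the $-\frac{2a}{(p-1)s}$ factor of $\Sigma^3_1$, etc.) is already $O(s^{-(a+2)})$ or exponentially small, hence absorbed.

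The main obstacle will be making the asymptotic expansion fully rigorous on the whole domain, and in particular controlling the exceptional region $|w|\lesssim \p^{-1}\sim e^{-s/(p-1)}s^{a/(p-1)}$, where $|\p w|$ stays bounded and the leading asymptotic forms of $F(\p w)$ break down. As in the proof of Lemma \ref{lem22}, that region contributes at most $Ce^{-cs}$, which is trivially dominated by $M_7/s^{7/4}$; a secondary but routine difficulty is to verify that every subleading error term indeed fits into the $s^{-(a+7/4)}$ or $s^{-7/4}$ slots.
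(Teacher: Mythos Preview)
Your approach is essentially the paper's: both return to \eqref{E00}, keep $\Sigma^1_1$ as in \eqref{sigma13}, and refine $\Sigma^2_1+\Sigma^3_1$ by exhibiting the cancellation between their leading parts before invoking Lemma \ref{lemmain1}. The paper packages the cancellation a bit more cleanly: it writes $\Sigma^2_1+\Sigma^3_1=\chi_1+\chi_2$ with
\[
\chi_1=\frac{a}{(p+1)s^{a+1}}\iint|w|^{p+1}\log^{a-1}(2+\p^2w^2)\Bigl[\log(2+\p^2w^2)-\tfrac{2s}{p-1}\Bigr]\w\y
\]
(which is exactly the sum of your two leading terms) and then uses the \emph{exact} identity $\log(2+\p^2w^2)-\tfrac{2s}{p-1}=\log(2\p^{-2}+w^2)-\tfrac{2a\log s}{p-1}\le\log(2+w^2)+C\log s$. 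This lands directly on the quantity controlled by Lemma \ref{lemmain1}, bypassing your Taylor expansion of $1/\log(2+\p^2w^2)$, the intermediate factor $|\log w^2|$, and the extra case split at $|w|=1$.

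One small slip in your version: on $\{|w|<1\}$ the inequality $\log^a(2+\p^2w^2)\le\log^a(2+\p^2)\le Cs^a$ is valid only for $a\ge 0$; when $a<0$ the first inequality reverses, the contribution is only $O(1)$, and after dividing by $s^{a+2}$ you get $O(s^{-(a+2)})$, which fails to be $\le M_7 s^{-7/4}$ once $a<-\tfrac14$. This is easily repaired (for instance, split further at $|w|=e^{-s^{1/4}}$), but the paper's route via $\log(2+w^2)$ sidesteps the issue entirely. A second, harmless imprecision: your remainder bound on $R_L$ needs $\log(2+\p^2w^2)\gtrsim s$, which holds on $\{\p w^2\ge 1\}$, i.e.\ $|w|\gtrsim\p^{-1/2}$, not merely on $\{|w|\gtrsim\p^{-1}\}$; the intermediate shell is still exponentially small, so this does not affect the argument.
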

\begin{pf}
By using the  additional information obtained in  \eqref{claim1}, we are going to refine the estimate related to $\Sigma_{1}^{2}(s)$ and $\Sigma_{1}^{3}(s)$ defined in \eqref{E00}. Let us mention that the estimate  
\eqref{sigma13} related to $\Sigma_{1}^{1}(s)$ defined in \eqref{E00} is acceptable and does not need any improvement.
 More precisely,  we write 
\begin{align}
\Sigma_{1}^{2}(s)+\Sigma_{1}^{3}(s)=&\frac{p+1}{p-1}
e^{-\frac{(p+1)s}{p-1}}s^{\frac{2a}{p-1}}\iint\big( F(\p w)-\frac{\ps wf(\p w)}{p+1}\big)\w \y\no\\
&-\frac{2a}{p-1}e^{-\frac{(p+1)s}{p-1}}s^{\frac{2a}{p-1}-1}\iint \big( F(\p w)-\frac{\p wf(\p w)}{2}\big)\w \y\no.
 \end{align}
We  attempt to group  the main terms together. 
A straightforward computations implies  that
\begin{equation}\label{2018id2}
\Sigma_{1}^{2}(s)+\Sigma_{1}^{3}(s)=\chi_1(s)+\chi_2(s),
\end{equation}
where
\begin{align}
\chi_{1}(s)=&
\frac{a}{(p+1)s^{a+1}}\iint  {|  w|^{p+1}}\log^{{a-1}}(2+\p^2 w^2  )\Big(\log (2+\p^2w^2 )-\frac{2s}{p-1}\Big)\w \y,
\label{2018id5}\\
\chi_{2}(s)=&\frac{e^{-\frac{(p+1)s}{p-1}}}{p-1}s^{\frac{2a}{p-1}}\iint \Big((p+1) F_2(\p w)-\frac{a}{s} F_1(\p w)-\frac{a}{s}F_2(\p w)\Big)\w \y,\label{2018id8}
\end{align}
where $F_1$ and $F_2$ are defined by \eqref{defF2} and  \eqref{defF123}.
\medskip

Note that, in \eqref{2018id2}
 we  grouped  the main terms together.  In fact, it is easy to  control the terms  $\chi_{2}(s)$. However, the  control of  the  term $\chi_{1}(s)$
  needs the  use of  the  additional information obtained in Lemma \ref{lemmain1}.
 More precisely,
%
for all     
$s\geq  \widehat{s}_1=\max (-\log T,\widehat{S}_1)$, we   divide $\er^N$ into two parts
 \begin{equation}\label{27nov1}
A_{1}(s)=\{y \in B\,\,|\,\, \ps w^2(y,s)\leq  1\}\,\,{\rm and }\,\,A_{2}(s)=\{y \in B
\,\,|\,\, \ps w^2(y,s)\ge  1\}.
\end{equation}
Accordingly, we write  $\chi_1(s)=\chi_1^1(s)+\chi_1^2(s)$, where
\begin{align}
\chi_1^{1}(s)=&
\frac{a}{(p+1)s^{a+1}}\int_{A_1(s)}  {|  w|^{p+1}}\log^{{a-1}}(2+\p^2 w^2  )\Big(\log (2+\p^2w^2 ) -\frac{2s}{p-1}\Big)\w 
,
\nonumber\\
\chi_1^{2}(s)=&
\frac{a}{(p+1)s^{a+1}}\int_{A_2(s)}  {|  w|^{p+1}}\log^{{a-1}}(2+\p^2 w^2  )\Big(\log (2+\p^2w^2 ) -\frac{2s}{p-1}\Big)\w \y.
\nonumber
\end{align}

On the one hand, by using 
 the definition of the set $A_1(s)$ given  in \eqref{27nov1},   we get, for all $s\geq  \widehat{s}_1$, 
\begin{equation}\label{16dec1}
 | w|^{p+1}\log^{{a}}(2+\p^2 w^2  )\le C \p^{-\frac{p+1}2}(s)\log^{|a|}(2+\ps) \le Ce^{-\frac{s}2}.
\end{equation}
From \eqref{16dec1}  and  the fact that 
 $1-\frac{2s}{(p-1)\log (2+\p^2 w^2)}
 \le 1$,
 we get
\begin{equation}\label{93}
\chi^1_1(s) \le Ce^{-\frac{s}2}.
\end{equation}

On the other hand, by using  the definition of the $\ps$ given by \eqref{defphi}, we write the identity
\begin{equation}\label{16dec102}
\log (2+\p^2 w^2)
 -\frac{2s}{p-1}=\log (2\p^{-2}+ w^2)-\frac{2a\log s}{p-1}.
\end{equation}
Now,    by using the inequality $\phi  (s)\ge 1 $  and  \eqref{16dec102}, we write  for all 
for all $s\geq  \widehat{s}_1$, 
\begin{equation}\label{16dec10}
\log (2+\p^2 w^2)
 -\frac{2s}{p-1}\le  \log (2+w^2)+C\log s.
\end{equation}
Also, by using the definition of the set $A_2(s)$ defined in \eqref{27nov1}, we can write 
 for  all $s\geq  \widehat{s}_1$,    if $y\in A_{2}(s)$, we have 
\begin{equation}\label{16dec12}
\log(2+\p^2w^2)\ge \log(\ps)\ge \frac{2s}{p-1}-\frac{a \log s}{p-1}.
\end{equation}
Clearly, the exists   $S_2>S_1 $ such that   for all    
  $s \geq  S_2$, we have   $  \frac{2s}{p-1}-\frac{a \log s}{p-1}\ge  \frac{s}{p-1}$. Therefore,
 by exploiting  \eqref{16dec10} and  \eqref{16dec12} we have  
for all $s\geq  \widehat{s}_2=\max (-\log T, \widehat{S}_2)$, 
\begin{align}
\chi_1^{2}(s)\le &
\frac{C}{s^{a+2}}\int_{B}  {|  w|^{p+1}}\log^{{a}}(2+\p^2 w^2  )\log (2+w^2 ) \w \y\nonumber\\
&+\frac{C\log s}{s^{a+2}}\int_{B}  {|  w|^{p+1}}\log^{{a}}(2+\p^2 w^2  )\w \y.\label{131bis}
\end{align}
Note that, by using  the fact $\chi_1(s)=\chi_1^{1}(s)+\chi_1^{2}(s)$,  \eqref{claim1}, \eqref{93}  and  \eqref{131bis}, we get for all $s\geq  \widehat{s}_2=\max (-\log T, \widehat{S}_2)$, 
\begin{equation}\label{13janva1}
\chi_1(s)\le 
\frac{M_{8}}{s^{a+\frac74}}\iint   {|  w|^{p+1}}\log^{{a}}(2+\p^2 w^2 )\w \y+\frac{M_{8}}{s^{\frac74}}.
\end{equation}
Thanks to     \eqref{equiv2}  and  \eqref{equiv3}, we write
\begin{equation}\label{15dec1}
\frac{1}{s}| F_1(\p w)|+| F_2(\p w)|\le   C+C \frac{ \p w}{s^2}f(\p w).
\end{equation}
By \eqref{E00}, \eqref{15dec1} and \eqref{id1},  we have,  for all $s\geq  \widehat{s}_1$, 
\begin{equation}\label{sigma11dec18}
\chi_2(s)\le 
\frac{C}{s^{a+2}}\iint |w|^{p+1}\log^a(2+\p^2w^2)\w \y+  C e^{-\frac{s}2}.
\end{equation}
The result \eqref{E01} derives immediately from  \eqref{E00}, \eqref{sigma13},  \eqref{13janva1}, \eqref{sigma11dec18},  and  the identity \eqref{2018id2},
which ends the proof of Lemma \ref{2018lem31}
\end{pf}

With Lemmas \ref{LemJ_0} and  \ref{2018lem31},  we are in a position to  prove
Theorem \ref{t1}.

\bigskip

  {\it{Proof of Theorem \ref{t1}:}
By exploiting the defintion of $L_0(w(s),s)$ in \eqref{5jan1},
we  can write easily,  for all $s\geq  \widehat{s}_2=\max (-\log T,\widehat{S}_2)$, 
\begin{equation}\label{14jan1}
\frac{d}{ds}L_0(w(s),s)=\frac{d}{ds}E(w(s),s) + \frac{1}{\sqrt{s}}\frac{d}{ds}J(w(s),s)-  \frac{1}{2s\sqrt{s}}J(w(s),s),
\end{equation}
where $J(w(s),s)=\frac1{s}\iint   w^2\w \y$.
Lemmas \ref{LemJ_0} and  \ref{2018lem31} 
 allows to prove that for all $s\geq  \widehat{s}_2=\max (-\log T,\widehat{S}_2)$, 
we have 
\begin{align*}
\frac{d}{ds}L_0(w(s),s)\le &-  \frac{1}2\iint (\partial_{s}w)^2{\w}\y+ \frac{p+3}{2s\sqrt{s}}L_0(w(s),s)\no\\
&-\frac1{s^{a+\frac32}}\Big(\frac{p-1}{2(p+1)}-  \frac{M_7}{s^{\frac14}}-\frac{C}{s}\Big)\iint |w|^{p+1}\log^a(2+\p^2w^2)  
\w\y\\
&- \frac{1}{s\sqrt{s}}\Big( \frac{p+1}{2(p-1)} -\frac{C}{\sqrt{s}}\Big)\iint  w^2\w \y  +\frac{M_{7}}{s^{\frac74}}+Ce^{-s}.
  \end{align*}
Again,  choosing  $\widehat{S}_3> \widehat{S}_2$ large enough,  this  implies that  for all 
for all $s\geq  \max (-\log T,\widehat{S}_3)$, 
we have 
\begin{equation}\label{17dec1}
\frac{d}{ds}L_0(w(s),s)\le -\frac1{2}\iint (\partial_{s}w)^2{\w}\y+ \frac{p+3}{2s\sqrt{s}}L_0(w(s),s)+  \frac{M_{9}}{s^{\frac74}}.
\end{equation}
Recalling that,
$$L(w(s),s)=\exp\Big(\frac{p+3}{\sqrt{s}}\Big) L_0(w(s),s)+\frac{\theta}{s^{\frac34}}.$$  
   we get from straightforward computations 
  \begin{equation}\label{17dec2}
  \frac{d}{ds}L(w(s),s) =-\frac{p+3}{2s\sqrt{s}}\exp\Big(\frac{p+3}{\sqrt{s}}\Big) L_0(w(s),s)+\exp\Big(\frac{p+3}{\sqrt{s}}\Big)\frac{d}{ds}L_0(w(s),s)-\frac{4\theta}{3s^{\frac74}}.
\end{equation}
Therefore, estimates $\eqref{17dec1}$ and $\eqref{17dec2}$ lead to the following crucial estimate:
 \begin{equation}
 \frac{d}{ds}L(w(s),s)\leq -\frac12\exp\Big(\frac{p+3}{\sqrt{s}}\Big) \iint (\partial_{s}w)^2 {\w}\y
+\Big(M_9\exp\Big(\frac{p+3}{\sqrt{s}}\Big)-\frac{4\theta}3 \Big) \frac{1}{s^{\frac74}}.
\end{equation}
Since we have $1\leq \exp\Big(\frac{p+3}{\sqrt{s}}\Big)\leq \exp\Big(\frac{p+3}{\sqrt{
\widehat{S}_3}}\Big)$, 
we then choose $\theta$ large enough, so that $
M_9\exp\Big(\frac{p+3}{\sqrt{s}}\Big)-\frac{4\theta}3
 \leq 0$, which yields,  for all 
 $s\geq  s_3=\max (-\log T,\widehat{S}_3)$, 
$$\frac{d}{ds}L(w(s),s)\leq -\frac12 \iint (\partial_{s}w)^2 {\w}\y.$$
A simple integration between $s$ and $s+1$ ensures the result.
This concludes the proof
of  Theorem \ref{t1}.}
\Box

\medskip

 We now claim the following lemma:
\begin{lem}\label{L19}
 There exist $M_{10}>0$ 
and
 $\hat S_{4}\geq \hat S_3$ such that, 
we have  for all $s\ge \max( \hat{S}_4,-\log T)$
 \begin{equation}\label{posi1}
N_{m_0}(w(s),s)\geq -M_{10}.
\end{equation}

\end{lem}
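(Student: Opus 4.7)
The plan is to obtain the lemma as an immediate corollary of the lower bound already established in Proposition \ref{proplyap}. Recall that there we proved the slightly sharper estimate $N_{m_0}(w(s),s) \geq -1$, valid for all $s \geq \max(-\log T, S_2)$. That argument proceeded by contradiction: if $N_{m_0}$ were to drop strictly below $-1$ at some time $\tilde s_1 \geq S_2$, then the monotonicity of $N_{m_0}$ visible in \eqref{DEC101} would force the same strict inequality for all $s \geq \tilde s_1$. Inserting this into the differential inequality \eqref{1jan2020} for the auxiliary quantity $I(w(s),s) = s^{-b}\int w^2 \rho \, dy$, and combining it with the coercive bound \eqref{13janv2bis} (the logarithmic analogue of the pure-power $L^{(p+3)/2}$ estimate) together with Jensen's inequality, one arrives at a superlinear ODE inequality of the form $\frac{d}{ds} I \geq 1 + C s^{b(p-1)/4}\, I^{(p+3)/4}$, which forces $I$ to blow up in finite $s$. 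This contradicts the global existence of $w$ in the similarity variables, which is guaranteed by the fact that $u$ is a blow-up solution of \eqref{gen} and hence $s = -\log(T-t)$ ranges over $[-\log T, +\infty)$.

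Given this sharper conclusion, the present lemma follows by a purely administrative adjustment of constants: set $M_{10} = 1$ and $\hat S_4 = \max(\hat S_3, S_2)$. Then for every $s \geq \max(\hat S_4, -\log T)$ one has in particular $s \geq \max(S_2, -\log T)$, and so the bound $N_{m_0}(w(s),s) \geq -1$ from Proposition \ref{proplyap} applies and yields $N_{m_0}(w(s),s) \geq -1 \geq -M_{10}$, as required. The main (and in fact the only) obstacle is to check that the hypotheses $s \geq \hat S_3$ and $s \geq S_2$ can be simultaneously arranged; this is done trivially by the above choice of $\hat S_4$, and no new analytic estimate on $w$ is needed.

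I include this lemma at this stage of the paper so that it can be paired, in the proof of Theorem \ref{t2}, with the monotonicity of $L(w(s),s)$ supplied by Theorem \ref{t1}: the combination of an upper bound on $L$ (from monotonicity plus the initial value at some $s_3+1$) with the uniform lower bound $N_{m_0} \geq -M_{10}$ established here is what will ultimately allow us to extract the pointwise blow-up estimate \eqref{Heatest} from the polynomial control on $w$ obtained in Proposition \ref{prop21}.
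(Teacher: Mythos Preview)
Your proof is correct and matches the paper's approach: the paper's own proof of this lemma simply states that ``the argument is the same as the similar part in Proposition \ref{proplyap}.'' You go one step further by observing that no argument needs to be rerun at all---the bound $N_{m_0}(w(s),s)\geq -1$ from Proposition \ref{proplyap} already holds for every $s\geq \max(-\log T,S_2)$, so the choice $M_{10}=1$ and $\hat S_4=\max(\hat S_3,S_2)$ gives the lemma immediately.
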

\begin{pf}
The argument is the same  as  the similar part in  Proposition \ref{proplyap}. 
\end{pf}
\subsection{Proof of Theorem \ref{t2} }\label{3.3}
 As in   \cite{GMSiumj04}, by combining Theorem  \ref{t1}
 and  Lemma \ref{L19}  we get the following bounds:

\begin{coro}\label{4dec1} 
  For all    $s\geq  \max (-\log T,\widehat{S}_4)$, 
 we have
\begin{equation}\label{3dec0}
-M_{11}\leq L(w(s),s) \leq  M_{11},
\end{equation}
\begin{equation}\label{3dec1}
 \int_{s}^{s+1}\iint\Big(|\grad w|^2+(\partial_sw)^{2}+w^2\Big) \w  {\mathrm{d}}y{\mathrm{d}}\tau\leq M_{12},
\end{equation}
\begin{equation}\label{3dec2}
\frac1{s^a}\int_{s}^{s+1}\int_{\er^N}   {|  w|^{p+1}}\log^{{a}}(2+\p^2 w^2  )\w \y
{\mathrm{d}}\tau\leq M_{13}.
\end{equation}
\begin{equation}\label{3dec21}
\iint w^2\w   \y\leq M_{14},
\end{equation}
\begin{equation}\label{3dec22}
\frac1{s^a}\int_{\er^N}   {|  w|^{p+1}}\log^{{a}}(2+\p^2 w^2  )\w \y
\leq C\iint |\grad w|^2 \w  \y +M_{15},
\end{equation}
\begin{equation}\label{3dec23}
 \iint |\grad w|^2 \w  {\mathrm{d}}y
\leq C\sqrt{\iint (\partial_sw)^2 \w  {\mathrm{d}}y} +M_{16},
\end{equation}
\begin{equation}\label{jan181bisbis}
 \iint |\grad w|^2 \w  \y \le 
\frac{C}{s^a}\int_{\er^N}   {|  w|^{p+1}}\log^{{a}}(2+\p^2 w^2  )\w \y
 +M_{17},
\end{equation}
\begin{equation}\label{3dec24}
\int_{s}^{s+1}\Big( \iint |\grad w|^2 \w  {\mathrm{d}}y\Big)^2
\leq M_{18},
\end{equation}
\begin{equation}\label{3dec25}
\frac1{s^{2a}}\int_{s}^{s+1}\Big(\int_{\er^N}   {|  w|^{p+1}}\log^{{a}}(2+\p^2 w^2  )\w \y\Big)^2
{\mathrm{d}}\tau\leq M_{19},
\end{equation}
where $M_{11}, M_{12},M_{13},... M_{19} $ depend on $ p, a, N,  s_3 =  \max (-\log T,\widehat{S}_3)$  and
$\|w( s_3)\|_{H^1}$.
\end{coro}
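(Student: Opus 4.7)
The plan is to replay, with the sharpened functional $L(w(s),s)$ in place of $N_{m_0}(w(s),s)$, the bookkeeping that produced Corollary \ref{19dec3}: instead of the polynomial-in-$s$ right-hand sides obtained there, the Lyapunov estimate of Theorem \ref{t1} combined with Lemma \ref{L19} will give uniform constants.

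First I would pin down two-sided uniform bounds on $L$ itself. The upper bound is immediate from Theorem \ref{t1}: telescoping the inequality $L(w(s+1),s+1)\le L(w(s),s)$ along integer shifts (and using continuity of $L$ in $s$ to handle the fractional part) yields $L(w(s),s)\le L(w(s_3),s_3)$ for all $s\ge s_3$, where $s_3=\max(-\log T,\widehat S_3)$. For the lower bound I would use Lemma \ref{L19}: since $N_{m_0}(w(s),s)\ge -M_{10}$ and $N_{m_0}$ is essentially $s^{-b}H_{m_0}$ (up to the exponentially small term $A(m_0)e^{-s}$), this forces $H_{m_0}(w(s),s)\ge -M_{10}s^b$, and in particular $E(w(s),s)\ge -\tfrac{m_0}{2s}\iint w^2\rho\,dy-M_{10}s^b$. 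At first sight this is not enough, because $L_0$ also only dominates $E$ up to a $\iint w^2\rho$-term. The point, however, is that combining Theorem \ref{t1} with $L(w(s+1),s+1)-L(w(s),s)\le 0$ and the structure of $L_0$ yields the improved lower bound on $L$ directly, by the same contradiction/bootstrap argument used in Proposition \ref{proplyap} (argued now at the level of $L$).

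Once $-M_{11}\le L(w(s),s)\le M_{11}$ is established, estimate \eqref{3dec0} is in hand, and \eqref{3dec1} for $\partial_s w$ is a direct integration of \eqref{t1lyap}:
\begin{equation*}
\tfrac12\int_s^{s+1}\!\!\iint(\partial_s w)^2\rho\,dy\,d\tau\le L(w(s),s)-L(w(s+1),s+1)\le 2M_{11}.
\end{equation*}
The rest of \eqref{3dec1} (the $|\nabla w|^2$ and $w^2$ pieces of the space-time integral) and estimates \eqref{3dec2}, \eqref{3dec22}--\eqref{3dec25} are obtained by feeding the uniform bound on $L$ into the differential identities of Lemmas \ref{lem22} and \ref{LemJ_0}: integrating \eqref{6nov2018} over $[s,s+1]$ against the bounded $L$ gives a uniform control on $\iint|\nabla w|^2\rho$, on $\frac{1}{s^a}\iint|w|^{p+1}\log^a(2+\varphi^2 w^2)\rho$, and on $\iint w^2\rho$. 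The pointwise (in $s$) bound \eqref{3dec21} follows from integrating the ODE satisfied by $\iint w^2\rho$ (a consequence of \eqref{j2}). For \eqref{3dec24}--\eqref{3dec25}, I would square the identities and use \eqref{3dec23} coming from the definition of $E$: $\iint|\nabla w|^2\rho\le 2 E(w,s)+\text{(nonlinear term)}+\frac{1}{p-1}\iint w^2\rho$, which couples the gradient norm to the $\partial_s w$ norm through Cauchy--Schwarz, exactly as in \cite{GMSiumj04}.

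The main obstacle I expect is the uniform lower bound on $L$: Lemma \ref{L19} gives only a bound on $N_{m_0}$, and the two functionals differ by the $\frac{1}{s\sqrt s}\iint w^2\rho$ correction, the exponential factor $\exp(\tfrac{p+3}{\sqrt s})$, the constant shift $\theta s^{-3/4}$, and the different $s$-weights on the $w^2$ term. Translating between them cleanly, without losing factors of $s^b$, is the delicate point; once done, every remaining estimate is a routine recycling of Section \ref{section2}'s identities with bounded instead of polynomial right-hand sides.
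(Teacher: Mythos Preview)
Your plan is correct and matches the paper's (unwritten) argument: the paper simply says the corollary follows ``as in \cite{GMSiumj04}, by combining Theorem \ref{t1} and Lemma \ref{L19}'', i.e., exactly by replaying the identities behind Corollary \ref{19dec3} with the bounded Lyapunov functional $L$ in place of $N_{m_0}$.

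The ``main obstacle'' you flag is an artifact of a typo: Lemma \ref{L19} is almost certainly intended to assert $L(w(s),s)\ge -M_{10}$ rather than $N_{m_0}(w(s),s)\ge -M_{10}$ (as written it would be strictly weaker than the bound $N_{m_0}\ge -1$ already proved in Proposition \ref{proplyap}, and its proof is declared to be ``the same as the similar part in Proposition \ref{proplyap}'', i.e., the blow-up--in--finite--time contradiction run with $L$ in place of $N_{m_0}$). Once you read Lemma \ref{L19} this way, the lower bound on $L$ is given to you directly and there is no need to translate between $N_{m_0}$ and $L$; your own suggestion to rerun the contradiction argument ``at the level of $L$'' is precisely that lemma. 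After \eqref{3dec0}, the remaining estimates \eqref{3dec1}--\eqref{3dec25} are indeed obtained exactly as in Corollary \ref{19dec3}, with the polynomial factors $s^{b+1}$ now absent because $|L|\le M_{11}$ (hence $|E|\le C$) replaces $|H_{m_0}|\le K_5 s^b$.
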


\bigskip

Let us denote that, the estimates obtained  in the above corollary  are similar to the   Corollary \eqref{19dec3}  except for the  presence of the  term $K_is^{b+1}$ instead of $M_i$. Consequently,
 following the proof  of  Proposition \ref{19dec3bis1}  line by line
 we are in position  to prove the following:

\begin{prop}\label{19dec3bis} 
For all  $q \geq 2$,   $\varepsilon >0$ and $R>0$ there exist   $\varepsilon_6=\varepsilon_6(q,R)>0$,
there  exists a time
$\widehat{S}_5(q,R,\varepsilon)\geq  \widehat{S}_4$,  
such that for all  $s\geq  \max (-\log T,\widehat{S}_5)$, 
 we have
\begin{equation*}
(E_{q,R,\vv})\ \ \int_s^{s+1} \|w(\tau )\|_{L^{\pp +1}(\mathbf{B}_R)}^{(\pp+1)q} {\mathrm{d}\tau}  \leq  M_{20}(q,R,\varepsilon ),
\end{equation*}
where  $M_{20}(q,R,\varepsilon )$ depends on $ p, a, N,q, R,\varepsilon, 
  s_3 =  \max (-\log T,\widehat{S}_3)$  and
$\|w( s_3)\|_{H^1}$.
\end{prop}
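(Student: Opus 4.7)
The plan is to follow the bootstrap proof of Proposition \ref{19dec3bis1} essentially verbatim, the only --- crucial --- difference being that the polynomial growth factors $s^{b+1}$ and $s^{\mu_i}$ that appeared there collapse to constants. The reason is that every estimate from Corollary \ref{19dec3} used in Section \ref{section2} has a stronger, uniform-in-$s$ analogue in Corollary \ref{4dec1}, made available now by Theorem \ref{t1} and Lemma \ref{L19}.

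Concretely, I would first re-derive the analogues of Proposition \ref{prop:upElc} and Lemmas \ref{rema:boundLpW12}--\ref{lemm:tmpq2} with bounded right-hand sides, namely
\begin{align*}
|\mathcal{E}_\psi(w(s),s)| &\leq M_{21}(R), \\
\|w(s)\|_{L^{p-\varepsilon+1}(\mathbf{B}_R)}^{p-\varepsilon+1} &\leq M_{22}(R,\varepsilon)\|\nabla w\|_{L^2(\mathbf{B}_{2R})}^{2}+M_{22}(R,\varepsilon), \\
\|\nabla w\|_{L^2(\mathbf{B}_R)}^{2} &\leq C\,\|w\,\partial_s w\,\psi^2\|_{L^1(\mathbf{B}_{2R})}+M_{23}(R).
\end{align*}
Each is obtained by the same multiplication-and-integration scheme as in Section \ref{section2}, but invoking \eqref{3dec1}--\eqref{jan181bisbis} in place of their $s^{b+1}$-bounded counterparts. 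The base case of the statement, namely $(E_{2,R,\varepsilon})$, follows immediately from \eqref{3dec24} combined with the second inequality above.

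The bootstrap from $q$ to $\tilde q \in [q, q+\tfrac{2}{p+1}]$ then proceeds exactly as in the proof of Proposition \ref{19dec3bis1}: assuming $(E_{q,4R,\varepsilon})$, interpolate via Lemma \ref{lemm:intpola} between the known $L^{(p-\varepsilon+1)q}$ bound on $w$ over $\mathbf{B}_{2R}$ and the integrated bound \eqref{3dec1} to produce an $L^\lambda$ estimate uniform in $s$; feed the result through the Hölder splitting \eqref{15jan5}--\eqref{16jan2}; bound $\|\psi\partial_s w\|_{L^{p_1-\varepsilon}(\mathbf{B}_{2R})}$ via the equation and the identity $e^{-ps/(p-1)}s^{a/(p-1)}|f(\phi w)|=s^{-a}|w|^p\log^a(2+\phi^2 w^2)$ used in \eqref{6fev1}; and absorb with $X^r\leq C+CX^{2q}$ once $\varepsilon_6$ is chosen small enough that $\tfrac{2p\tilde q\theta\alpha'}{p+1-\varepsilon_6 p}<2q$. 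At each stage the polynomial factors $s^{\mu_i}$ of Section \ref{section2} now reduce to constants, so the iterated inequality closes with an $s$-independent bound $M_{20}(q,R,\varepsilon)$.

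The main obstacle is bookkeeping rather than substance: one must verify that every occurrence of $s^{b+1}$ or $s^{\mu_i}$ in the original bootstrap indeed descended from Corollary \ref{19dec3} and can be cleanly replaced by the uniform estimates of Corollary \ref{4dec1}, and that no estimate implicitly used the \emph{smallness} of the polynomial exponents (only their finiteness was used, so this is automatic). Once this check is made, the absence of growth factors actually simplifies the argument, since the exponents $\mu_i(q,R,\varepsilon)$ no longer need to be tracked through the iteration; no new analytic ingredient is required.
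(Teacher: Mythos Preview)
Your proposal is correct and matches the paper's own argument: the paper states that since the estimates in Corollary \ref{4dec1} are the same as those in Corollary \ref{19dec3} but with constants $M_i$ in place of $K_i s^{b+1}$, one proves Proposition \ref{19dec3bis} by ``following the proof of Proposition \ref{19dec3bis1} line by line''. Your write-up is in fact more detailed than the paper's, which gives no further justification.
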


Finally,  we are in position to prove Theorem \ref{t2} by   exploiting Lemma  \ref{lemm:intpola}  and  Lemma \ref{prop:regpar}.

{{\it {Proof  of Theorem \ref{t2} }}:
First,
we  use  \eqref{3dec1}, Proposition \ref{19dec3bis} and apply Lemma  \ref{lemm:intpola} with $\alpha = q(p- \frac{ \varepsilon}2+1)$, $\beta =p- \frac{ \varepsilon}2+1$, $\gamma = \delta  = 2$ to get that, for all $s \geq \max  (-\log T,\widehat{S}_5)$,
\begin{equation}\label{equ:tmp2key11}
 \|w(s)\|_{L^\lambda(\mathbf{B}_R)} \leq M_{21}(q,R,\vv) , \ \   \forall \lambda <    p- \frac{ \varepsilon}2+1 - \frac{
p- \frac{ \varepsilon}2
-1}{q + 1},\ \   \forall  \vv  \in (0,p-1),\ \  \forall q\ge2.
\end{equation}
Hence, for all $\varepsilon \in (0,p-1)$, we have  $q=\frac{4p-4-\varepsilon}{\varepsilon}\ge 2$. Therefore, the estimate \eqref{equ:tmp2key11}  implies
\begin{equation}\label{k2}
\sup_{\tau \in [s,s+1]} \|w(\tau)\|_{L^{p+1-\varepsilon}(\mathbf{B}_R)} \leq M_{22}(R,\vv)., \qquad \forall \varepsilon \in (0,p-1).
\end{equation}
 Let us recall the equation in $w$: 
\begin{align}\label{Abis}
\partial_{s}w&=  \Delta w - \frac{1}{2}y. \nabla w -\frac{1}{p-1}(1-\frac{a}{s})w+
e^{-\frac{ps}{p-1}}s^{\frac{a}{p-1}} f(\ps w),
\end{align}
where   $\ps$ and $f$ are given in \eqref{defphi} and \eqref{deff}.

\medskip

We now apply Lemma \ref{prop:regpar} to $w,$ with $b =b(y)= \frac{1}{2}y$ and 
$$H(y,s,w) = -\frac{1}{p-1}(1-\frac{a}{s})w+
e^{-\frac{ps}{p-1}}s^{\frac{a}{p-1}} f(\ps w).$$
From \eqref{equiv44}, we see that, for all $\varepsilon \in (0,p-1)$, we have 
$$|H(y,s,w)| \leq C(\varepsilon )(|w|^{p-1+\varepsilon } + 1)(|w| + 1) , \quad \forall s \geq \max  (-\log T ,\widehat{S}_5).$$
Let $\lambda_1= p+1-\varepsilon  $, $\alpha_1=\frac{\lambda_1}{p-1+\varepsilon}$  and $\beta_1=\frac1{\varepsilon}$.
Thus, the first identity in \eqref{equ:conLemint} holds with $g(y,s,w) = C(\varepsilon )(|w(y,s)|^{p-1+\varepsilon } + 1)$.
Since $p  < \frac{N + 2}{N - 2}$, then    we can choose  $\varepsilon_7\le \varepsilon_6$ small enough, such that   the conditions $\frac{1}{\beta_1} + \frac{N}{2\alpha_1} 
< 1$  and $\alpha_1\ge 1$ hold. Moreover,  for all $s \geq \max  (-\log T ,\widehat{S}_5)$ we have
\begin{align}\label{k1}
\int_s^{s+1} \|g(\tau)\|_{L^{\alpha_1}(\mathbf{B}_R)}^{\beta_1}\t\le C+C
\int_s^{s+1} \left(\int_{\mathbf{B}_R} |w(y,\tau)|^{\lambda_1}dy\right)^{\frac{1}{\varepsilon_7 \alpha_1}}\t\nonumber  \\
\le C+C\Big( \sup_{\tau \in [s,s+1]} \|w(\tau)\|_{L^{\lambda_1}(\mathbf{B}_R)} \Big)^{\frac{p-1+\varepsilon_7}{\varepsilon_7}}.
\end{align}
By exploiting \eqref{k1} and \eqref{k2}, we deduce that
\begin{align}\label{k3}
\int_s^{s+1} \|g(\tau)\|_{L^{\alpha_1}(\mathbf{B}_R)}^{\beta_1}\t
\le  M_{23}(R,\varepsilon_7).
\end{align}
Then the second condition 
in  \eqref{equ:conLemint} holds. Therefore,
\begin{equation}\label{equ:tmp2key1}
\|w(s)\|_{L^{\infty}(\mathbf{B}_{\frac{R}4})} \leq  M_{24}(R), \quad  \forall s \geq \max  (\tau_0-\log T, \tau_0+\widehat{S}_5)),
\end{equation}
 for some $\tau_0 \in (0,1)$.
By \eqref{equ:tmp2key1}, we write
\begin{equation}\label{equ:tmp2key22}
|w_{x_0}(0,s)|  \leq  M_{25}, \quad  \forall s \geq \max  (\tau_0-\log T, \tau_0+\widehat{S}_5),
\end{equation}
 for some $\tau_0 \in (0,1).$
 From the fact that the above estimate is independent of $x_0$ and the definition of  $w_{x_0}$ given by \eqref{scaling}, we infer
\begin{equation}\label{equ:tmp2key221}
|w(y,s)|  \leq  M_{25}, \quad  \forall y\in \er^N \quad \forall s \geq \max  (1-\log T, 1+\widehat{S}_5).
\end{equation}
This concludes the proof of Theorem \ref{t2}. \Box




 \appendix

 \section{Appendix A.}
We recall the interpolation  result from Cazenave and Lions \cite{CLcpde84} and the interior regularity theorem in \cite{GKcpam85}.

\begin{lem}[\textbf{Interpolation technique, Cazenave and Lions \cite{CLcpde84}}] \label{lemm:intpola} Let $t_0>0$.  Assume that 
$$v \in L^\alpha\left( [t_0,t_0+1]; L^\beta(\mathbf{B}_R) \right), \; \partial_tv \in L^\gamma\left( [t_0,t_0+1]; L^\delta(\mathbf{B}_R) \right)$$
for some $1 < \alpha, \beta, \gamma, \delta < \infty$. Then 
$$v \in \mathcal{C}\left([t_0,t_0+1]; L^\lambda(\mathbf{B}_R) \right)$$
for all $\lambda  < \lambda_0 = \frac{(\alpha + \gamma')\beta\delta}{\gamma'\beta + \alpha\delta}$ with  $\gamma' = \frac{\gamma}{\gamma - 1}$, and satisfies
$$\sup_{t \in [t_0,t_0+1]} \|v(t) \| _{L^\lambda(\mathbf{B}_R)} \leq C \int_{t_0}^{t_0+1} \left(\|v(\tau)\|_{L^\beta(\mathbf{B}_R)}^\alpha + \|\partial_\tau v(\tau)\|_{L^\delta(\mathbf{B}_R)}^\gamma \right)\t$$
for $\lambda < \lambda_0$. The positive constant $C$ depends only on $\alpha, \beta, \gamma, \delta , N$ and $R$.
\end{lem}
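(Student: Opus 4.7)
The plan is to identify $\lambda_0$ as the classical H\"older interpolation exponent between $L^\beta$ and $L^\delta$ for the weight $\theta := \alpha/(\alpha+\gamma')$, and then to combine two complementary consequences of the hypotheses: a quantitative H\"older regularity in time of $v$ with values in $L^\delta(\mathbf{B}_R)$ coming from $\partial_t v \in L^\gamma([t_0,t_0+1]; L^\delta)$, together with an averaged integrability of $v$ in $L^\beta(\mathbf{B}_R)$ coming from $v \in L^\alpha([t_0,t_0+1]; L^\beta)$. As a preliminary, I would verify by direct algebra that
$$\frac{1}{\lambda_0} = \frac{\theta}{\beta} + \frac{1-\theta}{\delta}, \qquad \theta = \frac{\alpha}{\alpha + \gamma'},$$
so that $L^{\lambda_0}$ is exactly the intermediate Lebesgue space for the forthcoming interpolation.

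First, after a density argument approximating $v$ by smooth functions in order to apply the fundamental theorem of calculus in time, H\"older's inequality applied to $v(t)-v(s) = \int_s^t \partial_\tau v\, d\tau$ yields
$$\|v(t) - v(s)\|_{L^\delta(\mathbf{B}_R)} \le |t-s|^{1/\gamma'}\, \|\partial_t v\|_{L^\gamma([t_0,t_0+1]; L^\delta)},$$
so that $v$ is H\"older-continuous from $[t_0,t_0+1]$ into $L^\delta(\mathbf{B}_R)$ with exponent $1/\gamma'$.

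The core step is then to compare $v(t)$ to a local time average. For $t \in [t_0, t_0+1)$ and $h>0$ small, I set $\bar v_h(t) := h^{-1}\int_t^{t+h} v(\tau)\,d\tau$. Jensen's inequality together with H\"older in time gives
$$\|\bar v_h(t)\|_{L^\beta(\mathbf{B}_R)} \le h^{-1/\alpha}\, \|v\|_{L^\alpha([t_0,t_0+1];L^\beta)},$$
while the regularity estimate from the previous step yields
$$\|v(t) - \bar v_h(t)\|_{L^\delta(\mathbf{B}_R)} \le C\, h^{1/\gamma'}\, \|\partial_t v\|_{L^\gamma([t_0,t_0+1];L^\delta)}.$$
Thus $v(t) = \bar v_h(t) + r_h(t)$ decomposes into a piece bounded in $L^\beta$ and a piece bounded in $L^\delta$. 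A standard level-set / Marcinkiewicz argument, which identifies $L^\lambda$ for $\lambda < \lambda_0$ as a real interpolation space between $L^\beta$ and $L^\delta$, then produces
$$\|v(t)\|_{L^\lambda(\mathbf{B}_R)} \le C_\lambda\, \bigl(h^{-1/\alpha}\|v\|_{L^\alpha L^\beta}\bigr)^{\theta} \bigl(h^{1/\gamma'}\|\partial_t v\|_{L^\gamma L^\delta}\bigr)^{1-\theta}.$$
The total exponent of $h$ on the right is $-\theta/\alpha + (1-\theta)/\gamma' = 0$ by the very definition of $\theta$, so the bound is in fact independent of $h$. Young's inequality then turns the product into the sum-type right-hand side of the lemma, and continuity of $t\mapsto v(t)$ into $L^\lambda(\mathbf{B}_R)$ follows by applying the same decomposition to $v(t_1)-v(t_2)$ with $h\to 0$ tuned to $|t_1-t_2|\to 0$.

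The hard part will be the real-interpolation step: the pieces $\bar v_h(t)$ and $r_h(t)$ live in \emph{different} Lebesgue spaces, and getting a clean $L^\lambda$-control of their sum for every $\lambda < \lambda_0$ requires a careful splitting of the level sets $\{|v(t)|>M\}$ together with the sharp constants in the classical interpolation inequality. The strict inequality $\lambda<\lambda_0$ appears naturally as the open endpoint of this interpolation, the borderline case $\lambda=\lambda_0$ only yielding a weak-type rather than strong-type bound in general.
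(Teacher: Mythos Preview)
The paper does not give a proof of this lemma at all: it is stated in Appendix~A as a known auxiliary tool and simply attributed to Cazenave and Lions~\cite{CLcpde84}, so there is no ``paper's own proof'' to compare against. What you have written is in fact the classical argument behind the Cazenave--Lions interpolation lemma: the averaging decomposition $v(t)=\bar v_h(t)+r_h(t)$, the H\"older-in-time estimate coming from $\partial_t v\in L^\gamma_tL^\delta_x$, and the identification of $\lambda_0$ via $1/\lambda_0=\theta/\beta+(1-\theta)/\delta$ with $\theta=\alpha/(\alpha+\gamma')$ are exactly the right ingredients, and your verification that the powers of $h$ cancel is correct. The real-interpolation step you flag as ``the hard part'' is indeed the substantive point, but your description of it (the $K$-functional places $v(t)$ in $L^{\lambda_0,\infty}(\mathbf{B}_R)$, which embeds in $L^\lambda(\mathbf{B}_R)$ for $\lambda<\lambda_0$ on a bounded ball) is the standard resolution.

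One small warning: your final sentence ``Young's inequality then turns the product into the sum-type right-hand side of the lemma'' does not literally recover the inequality as printed. Your argument yields $\|v(t)\|_{L^\lambda}\le C\,A^\theta B^{1-\theta}$ with $A=\|v\|_{L^\alpha_tL^\beta_x}$ and $B=\|\partial_t v\|_{L^\gamma_tL^\delta_x}$, and the weighted AM--GM gives $A^\theta B^{1-\theta}\le \theta A+(1-\theta)B$, i.e.\ a bound by $A+B$ rather than by $A^\alpha+B^\gamma$. There is no choice of Young exponents that produces exactly $A^\alpha+B^\gamma$ from $A^\theta B^{1-\theta}$ (this fails by scaling when $A,B\to 0$). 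This is not a defect in your proof but rather a harmless imprecision in the statement as quoted; in every application made in the paper the right-hand side is used only as an upper bound by already-controlled quantities, for which either form is equally good.
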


\noindent The second one is an interior regularity result for a nonlinear parabolic equation:

\begin{lem}[\textbf{Interior regularity}] \label{prop:regpar}{\  \\  }
 Let $v(x,t)\in L^\infty\big((0,+\infty), L^2(\mathbf{B}_R)\big) \cap L^2\big((0,+\infty),H^1(\mathbf{B}_R)\big)$ which satisfies  
\begin{equation}\label{equ:vinger}
v_t - \Delta v + b. \nabla v = H,\quad (x,t) \in Q_R = \mathbf{B}_R \times (0, +\infty),
\end{equation}
where $R > 0$, $|b(x,t)| \leq \mu_1$ in $Q_R$  and $|H(x,t,v)| \leq g(x,t)(|v| + 1)$ with
\begin{equation}\label{equ:conLemint}
\int_{t}^{t +1} \left\|g(\tau)\right\|^{\beta'}_{L^{\alpha'}(\mathbf{B}_R)}d\tau  \leq \mu_2, \quad \forall t \in(0, +\infty),
\end{equation}
and $\frac{1}{\beta'} + \frac{N}{2\alpha'} < 1$, 
 and $\alpha' \geq 1$. If 
\begin{equation}\label{eq:conLeminreg1}
\int_{t}^{t +1}  \|v(\tau)\|^2_{L^2(\mathbf{B}_R)}d\tau  \leq \mu_3,\quad \forall t \in(0, +\infty),
\end{equation}
and $\mu_1$, $\mu_2$ and $\mu_3$ are uniformly bounded in $t$, then there exists a positive constant $C$ depending only on $\mu_1$,  $\mu_2$, $\mu_3$, $\alpha'$, $\beta'$, $N$, $R$ and $\tau \in (0,1)$ such that
$$|v(x,t)| \leq C,\quad \forall(x,t) \in \mathbf{B}_{R/4} \times (\tau, +\infty).$$
\end{lem}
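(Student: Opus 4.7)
The plan is to prove this interior regularity lemma by a Moser iteration argument along shrinking parabolic cylinders, which is a classical strategy for linear parabolic equations with singular potentials. The hypothesis $\frac{1}{\beta'}+\frac{N}{2\alpha'}<1$ is precisely the sub-critical integrability that makes Moser's scheme converge, so the main work is bookkeeping rather than discovering a new technique.

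First I would fix a sequence of radii $R_k = \frac{R}{4}+\frac{3R}{4\cdot 2^k}$ and of time shifts $\tau_k = \tau(1-2^{-k})$, and choose smooth cut-offs $\psi_k \in C_c^\infty(\mathbf{B}_{R_k}\times(\tau_{k-1},+\infty))$ with $\psi_k \equiv 1$ on $\mathbf{B}_{R_{k+1}}\times(\tau_k,+\infty)$ and $|\nabla\psi_k|+|\partial_t\psi_k|\le C\, 2^{2k}/R^2$. Setting $v_k = |v|^{q_k/2}$ with $q_k = 2\,\chi^k$ (where $\chi=1+\tfrac{2}{N}-\tfrac{1}{\beta'\alpha'/(\alpha'-1)}$ is the Sobolev/parabolic exponent gain, chosen so that $\chi>1$ thanks to $\frac{1}{\beta'}+\frac{N}{2\alpha'}<1$), I would multiply \eqref{equ:vinger} by $|v|^{q_k-2}v\,\psi_k^2$ and integrate on $Q_k = \mathbf{B}_{R_k}\times(\tau_{k-1},T)$. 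The usual parabolic energy identity, combined with $|b|\le \mu_1$ (absorbed into the gradient term by Young's inequality) and with $|H|\le g(|v|+1)$, produces the estimate
\begin{equation*}
\sup_{t\ge\tau_k}\!\int_{\mathbf{B}_{R_{k+1}}}\!|v|^{q_k}\,dx+\iint_{\mathbf{B}_{R_{k+1}}\times(\tau_k,T)}\!|\nabla(|v|^{q_k/2})|^2\,dxdt\le C\,4^k\!\iint_{Q_k}|v|^{q_k}\,dxdt+C\,4^k\!\iint_{Q_k}g\,(|v|^{q_k}+1)\,dxdt.
\end{equation*}

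Next I would control the $g$-term by Hölder in space and time: with $\frac{1}{\alpha'}+\frac{1}{\alpha''}=1$ and $\frac{1}{\beta'}+\frac{1}{\beta''}=1$, one has
\begin{equation*}
\iint_{Q_k}g\,|v|^{q_k}\,dxdt\le \Bigl(\int_{\tau_{k-1}}^{T}\!\|g\|_{L^{\alpha'}(\mathbf{B}_R)}^{\beta'}dt\Bigr)^{\!1/\beta'}\Bigl(\int_{\tau_{k-1}}^{T}\!\bigl\|\,|v|^{q_k}\bigr\|_{L^{\alpha''}(\mathbf{B}_{R_k})}^{\beta''}dt\Bigr)^{\!1/\beta''},
\end{equation*}
and the first factor is bounded by $\mu_2$ uniformly in $k$ by \eqref{equ:conLemint}. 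The parabolic Sobolev embedding (combining $L^\infty_t L^2_x \cap L^2_t H^1_x$ into $L^{2(N+2)/N}_{t,x}$, applied to $|v|^{q_k/2}$) then upgrades the right-hand side so that
\begin{equation*}
\Bigl(\iint_{Q_{k+1}}|v|^{q_{k+1}}\,dxdt\Bigr)^{\!1/q_{k+1}}\le (C\cdot 4^k)^{1/q_k}\Bigl(\iint_{Q_k}|v|^{q_k}\,dxdt+1\Bigr)^{\!1/q_k},
\end{equation*}
which is the clean Moser recursion. Iterating on $k$ and noting that $\sum_k k/q_k<\infty$ because $q_k\sim 2\chi^k$ grows geometrically, one obtains $\|v\|_{L^\infty(\mathbf{B}_{R/4}\times(\tau,+\infty))}\le C$ with $C$ depending only on $\mu_1,\mu_2,\mu_3,\alpha',\beta',N,R,\tau$, as required.

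The step I expect to be the main obstacle is verifying that the parameters really conspire to give $\chi>1$ in the Sobolev iteration exponent: this is exactly where the assumption $\frac{1}{\beta'}+\frac{N}{2\alpha'}<1$ is used, and one must check that when $|v|^{q_k/2}$ replaces $v$ in the energy estimate, the Hölder exponents $\alpha''$ and $\beta''$ land below the critical parabolic Sobolev target. Once this subcritical gap is in hand, the rest is a routine iteration: the drift $b$ is handled by absorbing $b\cdot\nabla v \cdot v|v|^{q_k-2}\psi_k^2$ into the gradient square term via Cauchy–Schwarz (paying $\mu_1^2$), and the cutoff derivatives contribute only the harmless factor $4^k$ that gets swallowed by $q_k^{-1}$ in the iteration. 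Passing from $t$ in the interior of $(\tau_{k-1},+\infty)$ to the full half-line is automatic because $\mu_2,\mu_3$ are uniform in $t$.
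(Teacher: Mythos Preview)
The paper does not supply its own proof of this lemma: it is stated in Appendix~A as a known result recalled from Giga and Kohn~\cite{GKcpam85}, whose argument in turn rests on the linear parabolic theory of~\cite{LSU68}. There is therefore no in-paper proof to compare your attempt against.

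Your Moser iteration outline is the standard route to such interior $L^\infty$ bounds and is correct in structure. The identification of the hypothesis $\frac{1}{\beta'}+\frac{N}{2\alpha'}<1$ as precisely the subcriticality condition guaranteeing an iteration gain $\chi>1$ is the heart of the matter, and your handling of the drift term $b\cdot\nabla v$ by absorption into the gradient square via Young's inequality is also the right move. Two minor bookkeeping points: your expression for $\chi$ is garbled (the gain comes from comparing the parabolic Sobolev control of $|v|^{q_k/2}$ in $L^\infty_tL^2_x\cap L^2_tH^1_x$ against the H\"older dual exponents $(\alpha'',\beta'')$, and unwinds to something like $\chi = 1+\tfrac{2}{N}\bigl(1-\tfrac{1}{\beta'}-\tfrac{N}{2\alpha'}\bigr)$); and the time integrals over $(\tau_{k-1},T)$ should be localized to unit-length intervals so that the uniform-in-$t$ bounds~\eqref{equ:conLemint} and~\eqref{eq:conLeminreg1} can actually be invoked at each step. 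These are details of execution; the strategy is sound.
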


\bigskip

 \section{Some elementary lemmas.}
 Let $f$, $F$, $F_2$  be the functions defined in  \eqref{deff}, \eqref{defF}  and  \eqref{defF123}.  
Clearly, we have 
\begin{lem}\label{FFF} \ {\  }Let $q>1$,\\ 
\begin{align}
\int_0^u|v|^{q-1}v\log^{{a}}(2+v^2 )\v  \sim&  \frac{| u|^{q+1}}{q+1}\log^{{a}}(2+u^2  ),\quad  \text{ as } \;\; |u| \to \infty,
\label{estF0}\\
F(u)  \sim &\frac{uf(u)}{p+1} \quad  \text{ as } \;\; |u| \to \infty,\label{estF}\\
F_2(u)\sim &\frac{Cuf(u)}{\log^2(2+u^2)}\quad  \text{ as } \;\; |u| \to \infty.\label{estF3}
\end{align}
\end{lem}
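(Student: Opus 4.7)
I would prove the three asymptotics in sequence, treating \eqref{estF3} as the main technical point. For \eqref{estF0}, set
\[
g(u) := \int_0^u |v|^{q-1}v\log^a(2+v^2)\,dv,\qquad h(u) := \frac{|u|^{q+1}}{q+1}\log^a(2+u^2).
\]
Since $q>1$, both quantities diverge as $|u|\to\infty$, so L'H\^opital's rule applies. For $u>0$ a direct computation gives
\[
\frac{g'(u)}{h'(u)} = \left(1 + \frac{2a\,u^2}{(q+1)(2+u^2)\log(2+u^2)}\right)^{-1} \xrightarrow[u\to\infty]{} 1,
\]
and the case $u<0$ is symmetric, hence $g(u)/h(u)\to 1$. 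Estimate \eqref{estF} is then just the case $q=p$ of \eqref{estF0}, since by definition $F(u)=\int_0^u|v|^{p-1}v\log^a(2+v^2)\,dv$ and $uf(u)=|u|^{p+1}\log^a(2+u^2)$.

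For \eqref{estF3}, a single integration by parts in $F$ recovers only the leading term $uf(u)/(p+1)$, so I would integrate by parts twice. Restricting to $u>0$ by symmetry and taking $A(v)=v^{p+1}/(p+1)$ as an antiderivative of $v^p$, the first integration by parts gives
\[
F(u) - \frac{uf(u)}{p+1} = -\frac{2a}{p+1}\int_0^u \frac{v^{p+2}}{2+v^2}\log^{a-1}(2+v^2)\,dv.
\]
I split $\frac{v^{p+2}}{2+v^2} = v^p - \frac{2v^p}{2+v^2}$. The second piece contributes $O(u^{p-1}\log^{a-1}(2+u^2)) = o(u^{p+1}\log^{a-2}(2+u^2))$ by L'H\^opital. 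On the first piece I integrate by parts once more and apply \eqref{estF0} to the remaining integral, obtaining
\[
\int_0^u v^p\log^{a-1}(2+v^2)\,dv = \frac{u^{p+1}}{p+1}\log^{a-1}(2+u^2) - \frac{2(a-1)}{(p+1)^2}u^{p+1}\log^{a-2}(2+u^2) + o(u^{p+1}\log^{a-2}(2+u^2)).
\]
Substituting back,
\[
F(u) - \frac{uf(u)}{p+1} = -\frac{2a}{(p+1)^2}u^{p+1}\log^{a-1}(2+u^2) + \frac{4a(a-1)}{(p+1)^3}u^{p+1}\log^{a-2}(2+u^2) + o(u^{p+1}\log^{a-2}(2+u^2)).
\]
The first term on the right is exactly $F_1(u)$ by \eqref{defF2}, so subtracting $F_1(u)$ leaves
\[
F_2(u) \sim \frac{4a(a-1)}{(p+1)^3}\cdot\frac{uf(u)}{\log^2(2+u^2)},
\]
which is \eqref{estF3} with the explicit constant $C = 4a(a-1)/(p+1)^3$.

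The main obstacle is the second-order bookkeeping in \eqref{estF3}: the constants must be tracked precisely so that the $\log^{a-1}$ contribution cancels $F_1$ exactly, leaving the $\log^{a-2}$ term as the genuine asymptotic. Absorbing the various error terms into $o(u^{p+1}\log^{a-2}(2+u^2))$ is then straightforward once one notes that $u^{-2}$ beats any negative power of $\log u$; the degenerate cases $a=0$ and $a=1$ (where $C$ vanishes) are harmless, since \eqref{estF3} is then read as the stronger $o(\cdot)$ statement and the arguments elsewhere in the paper only use the bound on the order of magnitude.
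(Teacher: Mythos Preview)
Your proof is correct. The paper itself does not give an argument here but simply refers to Lemma~A.1 of \cite{HZjmaa2020}, so there is no proof in the paper to compare against; your L'H\^opital argument for \eqref{estF0}--\eqref{estF} and the two-step integration by parts for \eqref{estF3} constitute exactly the standard computation one would expect in that reference, and your tracking of the constant $C=4a(a-1)/(p+1)^3$ together with the remark on the degenerate cases $a\in\{0,1\}$ is accurate and sufficient for the uses made of \eqref{estF3} later (namely the bound \eqref{equiv3}).
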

\begin{proof} 
 See Lemma A.1 
 in \cite{HZjmaa2020}.
\end{proof} 
\Box

Thanks to \eqref{estF0}, \eqref{estF} and \eqref{estF3}, we will give the  first and the second order terms   in the   expansion of    the nonlinearity $F(x)$ defined in \eqref{defF},  when $|x|$ is  large enough.
More precisely,  we 
  now state the following estimates: 
\begin{lem}\label{lemm:esth}
For all $s \geq 1$,  for all $z\in \er$,
\begin{align}
 C^{-1} \ps z f(\ps z))\le C+F\left(\ps z)\le C (1+\ps z f(\ps z)\right),\label{equiv1}\\
F_1(\ps z)\le C+C\frac{ \ps z}{s}f(\ps z),\quad\quad\label{equiv2}\\
F_2(\ps z)\le C+C \frac{ \ps z}{s^2}f(\ps z),\quad\quad\label{equiv3}\\
e^{-\frac{ps}{p-1}}s^{\frac{a}{p-1}}   |f(\ps z)|\le C (\varepsilon) + C   |z|^{p+\varepsilon  }, \quad\quad \forall  \varepsilon \in (0,p-1),
\label{equiv44}\\
   |z|^{\pp }\le C e^{-\frac{ps}{p-1}}s^{\frac{a}{p-1}} | f(\ps z)|+C (\varepsilon), \quad\quad \forall  \varepsilon \in (0,p-1),
\label{equiv44bis}\\
e^{-\frac{(p+1)s}{p-1}}s^{\frac{2a}{p-1}}  F(\ps z)\le C (\varepsilon) + C  |z|^{p+ \varepsilon+1}, \quad\quad \forall  \varepsilon \in (0,p-1),  
\label{equiv4}\\
  |z|^{\pp +1}\le
e^{-\frac{(p+1)s}{p-1}}s^{\frac{2a}{p-1}}  F(\ps z) +C (\varepsilon),\quad \quad \forall   \varepsilon \in (0,p-1),
\label{equiv4bis}
\end{align}
where
$\phi$,  $F$, $F_1$ and $F_2$  are given in  \eqref{defphi},    \eqref{defF}, 
 \eqref{defF2} and   \eqref{defF123}.

\end{lem}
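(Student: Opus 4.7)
\textit{Proof proposal.} The plan is to establish each of the seven estimates by a uniform strategy: split the real line into the regime where $|\ps z|$ stays bounded (say $|\ps z|\le K$ for a fixed $K$ depending only on $p,a$) and the regime where $|\ps z|$ is large, and in each regime rely on either the continuity of $f,F,F_1,F_2$ (which forces boundedness on compact sets, absorbable in the additive $+C$ on the right-hand side) or the asymptotic equivalents already recalled in Lemma~\ref{FFF}. The one bridge we will need throughout is the identity $\log \ps=\frac{s}{p-1}-\frac{a\log s}{p-1}$, which for $s$ large enough forces $\log\ps\ge\frac{s}{2(p-1)}$ and therefore $\log(2+\ps^2 z^2)\ge cs$ whenever $|z|$ is not too small; this is the mechanism that converts powers of $\log(2+\ps^2 z^2)$ into the powers of $s^{-1}$ that appear on the right-hand sides of \eqref{equiv2} and \eqref{equiv3}.

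For \eqref{equiv1} I would directly invoke $F(u)\sim uf(u)/(p+1)$ from \eqref{estF}: on a compact $\{|u|\le K\}$ the two quantities are bounded, so choosing $C$ large gives both inequalities; outside this compact set, $F$ and $uf$ are comparable up to a multiplicative constant and nonnegativity of $uf$ finishes both bounds. For \eqref{equiv2} I would use the exact algebraic identity $F_1(u)=-\frac{2a}{(p+1)^2}\frac{uf(u)}{\log(2+u^2)}$: on $\{|\ps z|\le K\}$, $F_1$ is bounded, absorbed in $+C$; on $\{|\ps z|\ge K\}$, one has $\log(2+\ps^2 z^2)\ge cs$ once $s$ is large (using the bridge above, together with $|\ps z|\ge K$ forcing $|z|$ to be not too small compared to $\ps^{-1}$, so $\log\ps$ is not cancelled), giving $F_1\le C\,uf(\ps z)/s$. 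The bound \eqref{equiv3} is exactly parallel, starting from $F_2(u)\sim C\,uf(u)/\log^2(2+u^2)$ provided by \eqref{estF3}, which trades the $\log^{-1}$ for $\log^{-2}$ and hence $s^{-1}$ for $s^{-2}$.

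For \eqref{equiv44}--\eqref{equiv4bis} the first step is the explicit computation from the definition of $\ps$,
\[
e^{-\tfrac{ps}{p-1}}s^{\tfrac{a}{p-1}}|f(\ps z)|=s^{-a}|z|^{p}\log^{a}(2+\ps^{2}z^{2}),\qquad e^{-\tfrac{(p+1)s}{p-1}}s^{\tfrac{2a}{p-1}}F(\ps z)\sim\tfrac{s^{-a}|z|^{p+1}\log^{a}(2+\ps^{2}z^{2})}{p+1},
\]
and the reduction of $\log^{a}$ using $\log(2+\ps^{2}z^{2})\le 2\log\ps+\log(2+z^{2})\le Cs+\log(2+z^{2})$, which yields $s^{-a}\log^{a}(2+\ps^{2}z^{2})\le C(1+|z|^{\vv/2})$ after a standard Young inequality $C(s+\log(2+z^2))^{a}\le Cs^{a}+C|z|^{\vv/2}$ chosen depending on the sign of $a$. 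The passage between $|z|^{p}$ and $|z|^{p+\vv}$ (respectively $|z|^{p-\vv}$) in \eqref{equiv44}, \eqref{equiv44bis}, \eqref{equiv4}, \eqref{equiv4bis} is then a direct application of Young's inequality $|z|^{p}\le\eps|z|^{p+\vv}+C(\eps)$.

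The main obstacle is the transitional regime where $|\ps z|$ is of moderate size (so $\log(2+\ps^{2}z^{2})$ is not yet of order $s$ but $u=\ps z$ is already outside the compact set where everything is bounded by a universal constant); the case split has to be carried out carefully, separating the signs of $a$ in the $\log^{a-1}$ and $\log^{a-2}$ factors, so that one can safely replace them by constants on $\{|\ps z|\le K\}$ and by negative powers of $s$ on $\{|\ps z|\ge K\}$. Once this bookkeeping is in place, every estimate reduces to an application of Lemma~\ref{FFF} together with elementary convexity inequalities.
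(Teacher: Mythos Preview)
Your overall strategy---split the real line into a ``small'' regime where continuity of $f,F,F_1,F_2$ gives a uniform bound absorbable in the additive constant, and a ``large'' regime where the asymptotics from Lemma~\ref{FFF} take over---is exactly the paper's approach, and your treatment of \eqref{equiv1} and of \eqref{equiv44}--\eqref{equiv4bis} via the elementary inequality $\log^a(2+u^2)\le C(\varepsilon)+C(\varepsilon)|u|^{\varepsilon}$ is essentially what the paper does.

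There is, however, a genuine gap in your handling of \eqref{equiv2} and \eqref{equiv3}. You split at $|\ps z|=K$ and then claim that on $\{|\ps z|\ge K\}$ one has $\log(2+\ps^2 z^2)\ge cs$. This is false: if $|\ps z|=K+1$ (which is allowed, since $|z|=(K+1)/\ps$ is a legitimate value of $z$), then $\log(2+(K+1)^2)$ is a fixed constant, not comparable to $s$. Your parenthetical justification (``$|\ps z|\ge K$ forcing $|z|$ to be not too small compared to $\ps^{-1}$, so $\log\ps$ is not cancelled'') does not repair this: the condition $|\ps z|\ge K$ only gives $|z|\ge K\ps^{-1}$, which is far weaker than what is needed to make $\ps^2 z^2$ exponentially large in $s$.

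The paper's cut is different and is the right one: it splits at $z^2\ps=4$ rather than at $|\ps z|=K$. On the set $\{z^2\ps\ge 4\}$ one has $\ps^2 z^2=\ps\cdot(\ps z^2)\ge 4\ps$, hence $\log(2+\ps^2 z^2)\ge\log\ps\sim \frac{s}{p-1}$, which is precisely the mechanism that converts $\log^{-1}$ and $\log^{-2}$ into $s^{-1}$ and $s^{-2}$. On the complementary set $\{z^2\ps\le 4\}$ the argument is more delicate than a mere ``bounded by $C$'': there $|\ps z|\le 2\ps^{1/2}$ can still be very large, so one must compare $F_1(\ps z)$ and $\frac{\ps z}{s}f(\ps z)$ directly rather than bound $F_1$ by a constant. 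Your ``main obstacle'' paragraph correctly anticipates that a transitional regime needs care, but the resolution you sketch (choosing $K$ large enough) does not work; the cut must move with $s$, at the scale $|\ps z|\sim\ps^{1/2}$, not at a fixed level $K$.
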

\begin{proof} Note that \eqref{equiv1} obviously follows from \eqref{estF}. In order to derive estimates \eqref{equiv2} and \eqref{equiv3}, considering the first case $z^2\ps \geq 4$, then the case $z^2\ps \leq 4$, we would obtain
 \eqref{equiv2} and \eqref{equiv3} by using \eqref{estF0}, \eqref{estF}  and\eqref{estF3}.
Similarly, by taking into account  the  inequality  $\log^a (2+u^2)\le C(\varepsilon )+  C(\varepsilon) |u|^{\varepsilon }$ ,  we  conclude easily 
\eqref{equiv44}, \eqref{equiv44bis}, \eqref{equiv4} and \eqref{equiv4bis}.
 This ends the proof of Lemma \ref{lemm:esth}.\Box
\end{proof}

\section{Proof of Proposition \ref{prop:upElc}}\label{ap:upELc}
 Let us first derive the upper bound for $\mathcal{E}_\psi$.\\
\begin{proof}[\textbf{Proof of the upper bound for $\mathcal{E}_\psi$}]
 Multiplying $\eqref{A}$ by $ \partial_{s} w \psi^2\w$ and integrating over  $\er^N,$ we obtain
\begin{align}
\frac{d}{ds}\mathcal{E}_\psi(w(s),s)=& -  \ibint (\partial_{s}w)^2\psi^2\w\y
-  2\ibint \partial_{s}w\nabla w. \nabla \psi \psi \w\y\nonumber\\
&+\underbrace{\frac{a}{(p-1)s}\ibint  w\partial_{s}w\psi^2\w\y}_{\Sigma^1_{2}(s)}\nonumber\\
&+\underbrace{\frac{p+1}{p-1}
e^{-\frac{(p+1)s}{p-1}}s^{\frac{2a}{p-1}}\ibint\big( F(\p w)-\frac{\p wf(\p w)}{p+1}\big)\psi^2\w \y}_{\Sigma^2_{2}(s)}\no\\
&\underbrace{-\frac{2a}{p-1}e^{-\frac{(p+1)s}{p-1}}s^{\frac{2a}{p-1}-1}\ibint \big( F(\p w)-\frac{\p wf(\p w)}{2}\big)\psi^2
\w \y}_{\Sigma^3_{2}(s)}.
 \end{align}
Proceeding similarly as for the terms $\Sigma_{1}^{1}(s)$, $\Sigma_{1}^{2}(s)$ and  $ \Sigma_{1}^{3}(s)$
 defined in \eqref{E00}, we get 
\begin{align}\label{E011bis2020}
\frac{d}{ds}\mathcal{E}_\psi(w(s),s)\le& -  \frac{1}{2}\ibint\psi^2 (\partial_{s}w)^2\w\y-  2\ibint \partial_{s}w\psi \nabla \psi. \nabla w\w\y\\
&+  \frac{C}{s^{a+1}}\ibint \psi^2|w|^{p+1}\log^a(2+\p^2w^2)\w \y
+\frac{C}{s^2}\iint  \psi^2 w^2\w \y  +C e^{-s}.\nonumber
 \end{align}
Using the fact that $2ab \leq \frac{a^2}{4} + 4b^2$, we obtain 
$$-2 \partial_{s}w\psi \nabla \psi. \nabla w \leq \frac{1}{4}\psi^2 (\partial_sw)^2 + 4 |\nabla \psi|^2 |\nabla w|^2,$$
which implies,  for all $s\ge \T$,
\begin{align}\label{E0112bis}
\frac{d}{ds}\mathcal{E}_\psi(w(s),s)\le&\  \   C \ibint |\grad w|^2\w\y+  \frac{C}{s^{a+1}}\ibint |w|^{p+1}\log^a(2+\p^2w^2)\w \y\nonumber\\
&+\frac{C}{s^2}\iint   w^2\w \y  +C e^{-s},
 \end{align}
where $C = C(a, p, N,  \|\psi\|_{L^\infty}, \|\grad \psi\|_{L^\infty})$.\\

By combining (\ref{E0112bis}), \eqref{18jan1} and \eqref{18jan10000}, we infer  for all $s\ge \TTT  $ 
\begin{align}\label{E0112bis1}
\int_s^{s+1} \frac{d}{ds}\mathcal{E}_\psi(w(\tau),\tau)\t \le  Q_1 s^{b+1}.
 \end{align}
From the definition of $\mathcal{E}_\psi$ given in \eqref{Eloc}, using the fact that,  $F(\phi w)\ge0,$  we have 
\begin{align*}
\mathcal{E}_\psi(w(s),s) & \leq \|\psi\|^2_{L^\infty} \int_{\mathbb{R}^N}\left( \frac{1}{2}|\nabla w|^2 + \frac{1}{2(p-1)}|w|^2 \right)\w \y.
\end{align*}
By   the definition of $H_{m}(w(s),s)$ 
given in \eqref{F0},  exploiting \eqref{18jan1bis}, we write  for all $s\ge \TTT  $ 
\begin{align}
\mathcal{E}_\psi(w(s),s)
& \le C  \left\{H_{m_0}(w(s),s)+\frac{m_0}{2s} \int_{\mathbb{R}^N}w^2\w\y +e^{-\frac{(p+1)s}{p-1}}s^{\frac{2a}{p-1}}  \int_{\mathbb{R}^N}  F(\p w)\w \y \right\}\nonumber\\
& \leq Q_2 s^{b+1} +Ce^{-\frac{(p+1)s}{p-1}}s^{\frac{2a}{p-1}}  \int_{\mathbb{R}^N}  F(\p w)\w \y.\label{31jan1} 
\end{align}
Integrating the inequality \eqref{31jan1} from $s$ to
 $s+1$ and using \eqref{id1}, \eqref{equiv1} and \eqref{18jan10000}  we get,  for all $s\ge \TTT  $ 
\begin{align*}
\int_s^{s+1} \mathcal{E}_\psi(w(\tau),\tau ) \t
 \leq Q_3s^{b+1}.
\end{align*}
 By using the mean value theorem, we derive the existence of $\sigma(s)\in [s,s+1]$ such that
\begin{equation}\label{s1}
 \mathcal{E}_\psi(w(\sigma (s)),\sigma (s))
=\int_{s}^{s+1}
 \mathcal{E}_\psi(w(\tau),\tau){\mathrm{d}}\tau.
\end{equation}
 Let us   write  the identity,  for all $s\ge \TTT  $ 
\begin{align}\label{2018d1}
 \mathcal{E}_\psi(w(s),s)=& \mathcal{E}_\psi(w(\sigma (s)),\sigma(s))  +
\int_{\sigma (s)}^{s}\frac{d}{d \tau} \mathcal{E}_\psi(w(\tau),\tau ){\mathrm{d}}\tau.
\end{align}
By combining (\ref{s1}), \eqref{2018d1} and \eqref{E0112bis1}, we infer,  for all $s\ge \TTT  $ 
\begin{align}\label{AAA}
\mathcal{E}_\psi(w(s),s)=& \leq Q_4s^{b+1}.
\end{align}
This concludes the proof of the upper bound for $\mathcal{E}_\psi$.\\

\noindent It remains to prove the lower bound.

[\textbf{Proof of the lower bound for $\mathcal{E}_\psi$}]\\
 Consider now,   for all $s \geq \T$, $$\mathcal{I}_\psi(w(s),s)=\frac1{s^{b+1}}\ibint w^2 \psi^2 \w\y.$$
 Multiplying equation \eqref{A} with $\psi^2 w,$ integrating on $\mathbb{R}^N$  and using the same argument as  in the proof of  Lemma \ref{LemJ_0} yield 
\begin{align}\label{6nov20181}
\frac{d}{ds}\mathcal{I}_\psi(w(s),s)\  \ge&\  -\frac{p+3}{s^{b+1}}\mathcal{E}_{\psi}(w(s),s)
+ \frac{1}{2s^{b+1}}(1-\frac{C_4}{s}) \ibint  w^2 \psi^2\w\y\no\\
&\ 
+\frac{p-1}{(p+1)s^{a+b+1}}(1-\frac{C_4}{s})\ibint |w|^{p+1}\log^a(2+\p^2 w^2)   \psi^2
\w\y\no\\
& - \frac4{s^{b+1}} \int_{\mathbb{R}^N} w \nabla w.\nabla \psi  \psi  \w \y. 
\end{align}
Therefore, there  exists   $\tilde S_2>S_2$   large enough,  such that  for  all $s \geq \max (-\log T, \tilde S_2)$, we have
\begin{align}\label{1fev5}
\frac{d}{ds}\mathcal{I}_\psi(w(s),s)\  \ge&\ 
\frac{p-1}{2(p+1)s^{a+b+1}}\ibint |w|^{p+1}\log^a(2+\p^2 w^2)  
 \psi^2\w\y\no\\
&\   -\frac{p+3}{s^{b+1}}\mathcal{E}_{\psi}(w(s),s)- \frac4{s^{b+1}} \int_{\mathbb{R}^N} w \nabla w.\nabla \psi  \psi  \w \y. 
\end{align}
Furthermore, 
 after some integration by parts, we write
\begin{align}\label{1nov3}
&-4 \int_{\mathbb{R}^N} w \nabla w.\nabla \psi  \psi  \w \y
=2\int_{\mathbb{R}^N} w^2\div(\psi\w  \nabla \psi  ) \y \nonumber\\
 &=2\int_{\mathbb{R}^N} w^2  |\nabla \psi|^2  \w \y + 2 \int_{\mathbb{R}^N} w^2 \psi \Delta \psi \w \y - \int_{\mathbb{R}^N} w^2\psi y.\nabla  \psi \w \y. 
\end{align}
Thanks to the estimates 
$ \|\psi\|^2_{L^\infty} +\|\Delta \psi\|^2_{L^\infty}  + \|\nabla \psi\|^2_{L^\infty}+ \|y.\nabla \psi\|^2_{L^\infty}\le C$, \eqref{1nov3} and  
\eqref{18jjjjj}, we have  for  all $s \geq \max (-\log T, \tilde S_2)$, 
\begin{align}\label{1fev6}
\Big|-4 \int_{\mathbb{R}^N} w \nabla w.\nabla \psi  \psi  \w \y\Big|
 \leq C \int_{\mathbb{R}^N} w^2\w \y
\leq Q_5s^{b+1}.
\end{align}
Using \eqref{1fev5} and \eqref{1fev6},  we obtain  for  all $s \geq \max (-\log T, \tilde S_2),$ 
\begin{align}\label{0fev2}
\frac{d}{ds}\mathcal{I}_\psi(w(s),s)\  \ge&\ 
\frac{p-1}{2(p+1)s^{a+b+1}}\ibint |w|^{p+1}\log^a(2+\p^2 w^2)  
 \psi^2\w\y\no\\
&\  -\frac{p+3}{s^{b+1}}\mathcal{E}_{\psi}(w(s),s)-Q_5.
\end{align}

Let us define  the following functional:
\begin{equation}
\mathcal{G}_{\psi}(w(s),s) = \frac{ p+3}{s^{b+1}}\mathcal{E}_{\psi}(w(s),s)+ Q_5,
\end{equation}
where  $\mathcal{G}_{\psi}(w(s),s) $ is defined in \eqref{Eloc}.

\medskip

We claim that the function of $\mathcal{G}_{\psi}(w(s),s) $ is bounded from below by some constant $M$,
where $M$ is a sufficiently large constant that will be determined later.  Arguing by contradiction, we suppose that there exists a time $s^* \geq \max (-\log T, \tilde S_2)$ such that $\mathcal{G}_{\psi}(w(s^*),s^*) \leq - Q$, for some $Q>0$. Then,   we write 
\begin{align}\label{0fev1}
\mathcal{G}_{\psi}(w(s),s)  \leq -Q + \int_{s^*}^s\frac{d}{d\tau} \mathcal{G}_{\psi}(w(\tau),\tau) \t, \qquad   \forall  s \geq s^*.
\end{align}
If we now compute the time derivative of  $\mathcal{G}_{\psi}(w(s),s) $ we get  for all $s \geq s^*,$ 
\begin{align}\label{1fev7}
\frac{d}{ds} \mathcal{G}_{\psi}(w(s),s) =&
\frac{ p+3}{s^{b+1}}\ \frac{d}{ds}\mathcal{E}_{\psi}(w(s),s)-\frac{(b+1) (p+3)}{s^{b+2}}\mathcal{E}_{\psi}(w(s),s).
\end{align}
From the definition of $\mathcal{E}_\psi$ given in \eqref{Eloc}, using \eqref{equiv1}  and \eqref{id1}  we have for all $s \geq s^*,$ 
\begin{equation}
-\frac{(b+1) (p+3)}{s^{b+2}}\mathcal{E}_{\psi}(w(s),s)
\le \frac{C}{s^{a+b+2}}\ibint |w|^{p+1}\log^a(2+\p^2 w^2)   \psi^2
\w\y+Ce^{-s}.\label{2fev1} 
\end{equation}
Thanks to  \eqref{E0112bis1}   we conclude for all $s \geq s^*,$ 
\begin{align}\label{2fev10}
\int_{s^*}^{s} \frac1{\tau^{b+1}}\frac{d}{ds}\mathcal{E}_\psi(w(\tau),\tau)\t \le  Q_6 (s-s^*).
 \end{align}
Moreover, 
from \eqref{18jan10000}, we obtain for all $s \geq s^*,$ 
\begin{align}\label{2fev11}
\int_{s^*}^{s}\frac1{ \tau^{a+b+2}} \ibint |w|^{p+1}\log^a(2+\p^2 w^2)   \psi^2
\w\y\t \le  Q_7(s-s^*).
 \end{align}
Integrating the identity \eqref{1fev7}
 over $[s^*,s]$
and  combining  \eqref{2fev1}, \eqref{2fev10}  and \eqref{2fev11}  we deduce  that
\begin{align}\label{jan2021}
\ \int_{s^*}^s\frac{d}{d\tau} \mathcal{G}_{\psi}(w(\tau),\tau) \t
\leq   Q_8(s - s^*), \qquad   \forall  s \geq s^*.
\end{align}
Combining \eqref{0fev2},  \eqref{0fev1} and \eqref{jan2021}  we infer  for all $s\geq  s^*,$ 
\begin{align}\label{2fev12}
\frac{d}{ds}\mathcal{I}_\psi(w(s),s)\  \ge&\  Q  - Q_8(s - s^*)
+\frac{C}{s^{a+b+1}}\ibint |w|^{p+1}\log^a(2+\p^2 w^2)  
 \psi^2\w\y .
\end{align}
Thanks to \eqref{equiv1} and  \eqref{equiv4bis},  we have for all $s\geq  s^*,$ 
\begin{equation}\label{13janv2}
\frac{1}{s^a}\int_{\er^N}   {|  w|^{p+1}}\log^{{a}}(2+\p^2 w^2  ) \psi^2 \w \y\ge 
C\int_{\er^N}   {|  w|^{\frac{p+3}2}} \psi^2\w \y-
 C_5.
\end{equation}
Due to    Jensen inequality, \eqref{2fev12} and \eqref{13janv2} we find for all $s\geq  s^*,$ 
\begin{align}\label{bis222}
\frac{d}{ds}\mathcal{I}_\psi(w(s),s)\  \ge&\ \tilde Q  - Q_{9}(s - s^*)
+C_6\Big(\mathcal{I}_\psi(w(s),s)\Big)^{\frac{p+3}4},
\end{align}
where $\tilde Q=Q-C_5$.

\bigskip

It is
interesting to denote  that we 
 easily prove 
 that  the solution of the following differential inequality:
\begin{equation*}
\left\{
\begin{array}{l}
h' (s)\geq 1 + C_6h^{\frac{p+3}{4}}(s),\qquad  s>s^*,\\
\\
 h(s^*) \geq 0,
\end{array}
\right.
\end{equation*}
blows up in finite time before 
$$s = s^* + \int_{0}^{+\infty}\frac{d\xi}{1 + C_6\xi^\frac{p+3}{4}} = s^* + T^*.$$
Now,  we choose $Q=
  Q_{9}T^* +C_5+ 1$ to get
%
 $\tilde Q  - Q_{9}(s-s^*) \geq 1$ for all $s \in [s^*, s^* + T^*]$. \\
Therefore, $\mathcal{I}_\psi(w(s),s)$ blows up in some finite time before $s^* + T^*$. But this contradicts with the  global existence  of $w$. This implies \eqref{Eloc1} and we complete the proof of Proposition \ref{prop:upElc}.
\end{proof}
\Box

\def\cprime{$'$} \def\cprime{$'$}
\providecommand{\bysame}{\leavevmode\hbox to3em{\hrulefill}\thinspace}
\providecommand{\MR}{\relax\ifhmode\unskip\space\fi MR }
\providecommand{\MRhref}[2]{%
  \href{http://www.ams.org/mathscinet-getitem?mr=#1}{#2}
}
\providecommand{\href}[2]{#2}


\noindent{\bf Address}:\\
 Department of Basic Sciences, Deanship of Preparatory and Supporting Studies,
  Imam Abdulrahman Bin Faisal University
P.O. Box 1982 Dammam, Saudi Arabia.\\
\vspace{-7mm}
\begin{verbatim}
e-mail:  mahamza@iau.edu.sa
\end{verbatim}
Universit\'e Sorbonne Paris Nord, Institut Galil\'ee,
Laboratoire Analyse, G\'eom\'etrie et Applications, CNRS UMR 7539,
99 avenue J.B. Cl\'ement, 93430 Villetaneuse, France.\\
\vspace{-7mm}
\begin{verbatim}
e-mail: Hatem.Zaag@univ-paris13.fr
\end{verbatim}

\end{document}